\documentclass[a4paper,11pt]{article}
\usepackage[top=2.5cm, bottom=2.5cm, left=2.5cm, right=2.5cm]{geometry}
\usepackage{settings}
\allowdisplaybreaks

\begin{document}
\title{Efficient computation of the asymptotics of extensive-rank HCIZ integrals}
\date{\today}
\author{Antoine Maillard$^\star$, Jean-Christophe Mourrat$^\circ$}

\maketitle

{\let\thefootnote\relax\footnote{
    \noindent
$\star$ Inria Paris \& DI ENS, PSL University, Paris, France. \href{mailto:antoine.maillard@inria.fr}{antoine.maillard@inria.fr}\\
$\circ$ Department of Mathematics, ENS Lyon and CNRS, Lyon, France. \href{mailto:jean-christophe.mourrat@ens-lyon.fr}{jean-christophe.mourrat@ens-lyon.fr}.
}}
\setcounter{footnote}{0}

\begin{abstract}
    We study the high-dimensional asymptotics of Harish-Chandra--Itzykson--Zuber 
    (HCIZ) integrals in the extensive-rank regime. The limit of these integrals 
    is governed by a one-dimensional boundary-value hydrodynamical problem 
    originally derived by Matytsin (1994) and rigorously proved by Guionnet 
    and Zeitouni (2002). Despite its wide-ranging applications, explicit 
    solutions to this problem are known only in a few specific cases. In this 
    work, we introduce an efficient numerical scheme based on a particle 
    discretization and prove its convergence to the continuous boundary-value 
    problem for generic boundary densities. We validate our approach against 
    known analytical solutions and apply it to generic densities, uncovering 
    interesting dynamical phenomena. 
    The high-dimensional limit of HCIZ integrals appears in various contexts,
    from the large deviations of random matrix spectra to the limiting free 
    energy of disordered systems, high-dimensional statistics, and machine 
    learning. As such, our contribution opens the way towards the numerical
    exploration of a wide range of high-dimensional models that were previously 
    intractable.
\end{abstract}

\setcounter{tocdepth}{2}
\tableofcontents

\section{Introduction and main results}\label{sec:intro}
\subsection{Asymptotics of the HCIZ integral}

\subsubsection{Notation and definition}

\textbf{Notation --}
Throughout the paper, we adopt the following notation.
For $E \subseteq \bbR$, we denote by $\mcP(E)$ the set of probability measures on $E$. 
We also let $\mcP_{\abc}(\bbR)$ denote the set of probability measures that are absolutely continuous with respect to the Lebesgue measure 
and $\mcP_c(\bbR)$ the set of compactly supported probability measures.
Moreover, for $\mu \in \mcP(\bbR)$, we let $\Sigma(\mu) \coloneqq \int \log |x-y| \rd \mu(x) \, \rd \mu(y)$.
For a given $\alpha \geq 0$, we denote by $\alpha \# \mu$ the push-forward measure under multiplication by $\alpha$, i.e.\ 
$\int f(x) (\alpha \# \mu)(\rd x) = \int f(\alpha x) \mu(\rd x)$ for test functions $f$.
For $\beta \in \{1, 2\}$, we set $\bbK = \bbR$ for $\beta = 1$ and $\bbK = \bbC$ for $\beta = 2$.
For $A \in \bbK^{d \times d}$, we denote by $A^\dagger$ the Hermitian conjugate of $A$, 
and set $\tr(A) \coloneqq (1/d) \Tr(A)$.
We use $\mcH_d$ to denote the set of symmetric ($\beta = 1$) or Hermitian ($\beta = 2$) $d \times d$ matrices.
For $A \in \mcH_d$, we denote by $\hmu_A \coloneqq (1/d) \sum_{i=1}^d \delta_{\lambda_i}$ its empirical eigenvalue distribution, where $(\lambda_i)_{i=1}^d$ are the (real) eigenvalues of $A$.
$\bbS_\beta^{d-1}$ denotes the unit sphere in $\bbK^d$.
Finally, $\mcU_\beta(d)$ denotes the group of orthogonal ($\beta = 1$) or unitary ($\beta = 2$) matrices, and $\rd \rho_{\beta}$ is the Haar measure on this compact group.

\myskip 
\textbf{Definition --}
We consider the ``Harish-Chandra--Itzykson--Zuber'' (HCIZ) integral~\cite{harish1957differential,itzykson1980planar}, 
defined as follows.
For $A, B \in \mcH_d$, let
\begin{equation}\label{eq:def_Id}
    I_d^\tbet(A, B) \coloneqq \frac{2}{\beta d^2} \log \int_{\mcU_\beta(d)} \rd \rho_\beta(O) \exp\left\{\frac{\beta d}{2} \Tr[O A O^\dagger B]\right\}.
\end{equation}
We also define, for $\theta \geq 0$,
\begin{equation}\label{eq:def_Id_theta}
    I_d^\tbet(\theta, A, B) \coloneqq I_d^\tbet(\sqrt{\theta} A, \sqrt{\theta} B) = \frac{2}{\beta d^2} \log \int_{\mcU_\beta(d)} \rd \rho_\beta(O) \exp\left\{\frac{\beta \theta d}{2} \Tr[O A O^\dagger B]\right\}.
\end{equation}
While one could fix $\theta = 1$ without loss of generality, it will sometimes be useful in our discussion to be able to vary this scaling parameter.

\subsubsection{Related literature}\label{subsubsec:related_literature}

The ``spherical'' Harish-Chandra--Itzykson--Zuber (HCIZ) integral of eq.~\eqref{eq:def_Id} is a ubiquitous mathematical object with wide-ranging applications across multiple disciplines. 
It was initially introduced in the study of Lie algebras~\cite{harish1957differential}, and has found applications 
in fields as diverse as harmonic analysis, representation theory~\cite{mcswiggen2021harish}, quantum gravity and quantum field theory~\cite{itzykson1980planar}, and integrable systems~\cite{menon2017complex}. 
We refer to~\citet{mcswiggen2021harish} for a review of these applications, 
and discuss other motivations in more detail below.

\myskip
\textbf{High-dimensional asymptotics --}
In this work, we consider the high-dimensional regime, i.e.\ the behavior of $I_d^\tbet$ in eq.~\eqref{eq:def_Id} when the dimension $d$ gets very large.
Notably, an exact finite-dimensional determinantal formula exists in the unitary case $\beta = 2$~\cite{harish1957differential,itzykson1980planar,tao2013hciz}:
\begin{equation}\label{eq:hciz_finite}
    I_d^{(2)}(A_d, B_d) = \frac{1}{d^2} \log \det \left[\left(e^{d a_i b_j}\right)_{1 \leq i,j \leq d}\right] - \frac{1}{d^2} \sum_{c \in \{a, b\}}\sum_{i<j} \log (c_j - c_i)  + K_d,
\end{equation}
where $K_d \in \bbR$ is a constant independent of $(A_d, B_d)$, and $a_1 \leq \cdots \leq a_d$, $b_1 \leq \cdots \leq b_d$ are the eigenvalues of $A_d$ and $B_d$.
However, evaluating the asymptotic limit of eq.~\eqref{eq:hciz_finite} as $d \to \infty$ is highly non-trivial, and current results on the limit of $I_d^\tbet$ as $d \to \infty$ do not rely on 
this explicit formula.
There are many motivations for considering the HCIZ integral in the high-dimensional regime: 
without aiming to be exhaustive, we detail three such motivations below.

\myskip
\textbf{I: Large deviations in random matrix theory --}
A central motivation for evaluating high-dimensional HCIZ integrals stems from the theory of large deviations for the spectra of random matrices~\cite{guionnet2023rare}.
Notably, a recent line of work has directly related the large deviations of the extreme eigenvalues
of many random matrix ensembles to the asymptotics of the HCIZ integral when one of the two matrices $A_d, B_d$ has finite rank~\cite{maillard2021large,mergny2022right,niu2025role,husson2024large,guionnet2020large,guionnet2022asymptotics,guionnet2020large2,husson2022large,augeri2021large}.
The more generic case of $A_d, B_d$ with growing rank as $d \to \infty$, on the other hand, is related to the
rate function for the large deviations of the entire spectral density of the Dyson Brownian motion~\cite{guionnet2002large,guionnet2004addendum,bun2014instanton,potters2020first}.
It also appears in the large deviations of the spectra of correlated random matrix models~\cite{maillard2024average}, 
and in the joint distribution of the eigenvalues of the sample covariance matrix of Gaussian data with a generic covariance matrix~\cite{james1964distributions,guionnet2004large}. 

\myskip
\textbf{II: Disordered systems and spin glasses --}
In statistical mechanics, the high-dimensional asymptotics of the HCIZ integral frequently appear in the limiting free energies of disordered systems
and mean-field spin glasses under rotationally-invariant couplings~\cite{marinari1994replica,parisi1995mean,maillard2019high}.
Recently, it was also shown to be an important ingredient in the free energy of constraint satisfaction problems over symmetric matrices, such as ellipsoid fitting~\cite{maillard2024fitting} 
or an average-case variant of the matrix discrepancy problem~\cite{maillard2024average}.
The partial differential equations governing the asymptotics of the HCIZ integral have also been shown to arise in the macroscopic fluctuation theory of the Dyson gas~\cite{dandekar2024current}.
Finally, it also connects to the theory of stochastic processes, e.g.\ via the statistics of non-intersecting Brownian bridges; see~\cite{grela2021non} and the references therein.

\myskip
\textbf{III: High-dimensional statistics and machine learning --}
In recent years, the HCIZ integral has also emerged in several studies in theoretical data science and high-dimensional statistics 
to characterize the information-theoretic limits and computational phase transitions of statistical estimation problems.
Concretely, the low-rank integral (where one of the two matrices $A, B$ in eq.~\eqref{eq:def_Id} has a finite rank as $d \to \infty$) 
plays an important role to derive such quantities in low-rank matrix estimation, compressed sensing, and generalized linear models~\cite{maillard2019high,pourkamali2023bayesian,barbier2023fundamental,niu2025role}.
Conversely, the extensive-rank HCIZ integral (when $A$ and $B$ both have rank $\Theta(d)$) appears in 
high-rank matrix denoising and factorization under rotationally invariant setups~\cite{maillard2022perturbative,pourkamali2023rectangular,pourkamali2024matrix,pourkamali2025rectangular,troiani2022optimal}.
Most recently, this extensive-rank regime has been actively deployed to derive exact asymptotics for the learning dynamics and generalization capabilities of models of extensive-width neural networks and simple models of attention~\cite{barbier2025statistical,boncoraglio2026bayes,erba2026nuclear,defilippis2025scaling,xu2025fundamental,erba2025bilinear,maillard2024bayes}.

\myskip 
\textbf{Previous results --}
The limit of $I_d^\tbet$ in the low-rank regime has been extensively studied and characterized~\cite{marinari1994replica,guionnet2005fourier,collins2007new}.
Conversely, the extensive-rank regime is governed by a one-dimensional boundary-value hydrodynamical problem, first derived by~\citet{matytsin1994large}
and later proved by~\citet{guionnet2002large,guionnet2004addendum}, see also \cite{collins2009asymptotics,bun2014instanton,menon2017complex} for extensions and discussions.
These results form the starting point of our study, and are stated below in detail.
Recent work has also expanded these extensive-rank asymptotic evaluations to a generalization of the HCIZ integral to rectangular matrices~\cite{guionnet2021large}.

\subsubsection{Matytsin's solution}

We now state the asymptotics of $I_d$, first obtained by~\citet{matytsin1994large} and proved by~\citet{guionnet2002large,guionnet2004addendum}.
First we need to define admissible trajectories between two probability measures.
\begin{definition}[Admissible trajectories]\label{def:adm}
    For any $\mu, \nu \in \mcP(\bbR)$, we denote by $\adm(\mu,\nu)$ the set of all pairs $(\rho, v)$ of measurable functions from $(0,1) \times \bbR$ to $\bbR$ such that the following conditions hold.
\begin{enumerate}
\item For each $t \in (0,1)$, the function $\rho(t,\cdot)$ is the density of a probability measure which is absolutely continuous with respect to the Lebesgue measure.
\item We have the integrability condition
\begin{equation}
\label{eq:integrability}
\int_0^1 \int_\bbR |v(t,x)| \rho(t,x) \, \rd x \, \rd t < +\infty.
\end{equation}
\item The continuity equation
\begin{equation}
\label{eq:continuity}
\partial_t \rho + \partial_x (\rho v) = 0
\end{equation}
is satisfied in the weak sense.
\item
 We have
\begin{equation*}  
\lim_{t \to 0} \rho(t,x) \, \rd x = \mu \quad \text{ and } \quad \lim_{t \to 1} \rho(t,x) \, \rd x = \nu,
\end{equation*}
in the sense of weak convergence of probability measures.
\end{enumerate}
\end{definition}
\noindent
Notice that the condition in eq.~\eqref{eq:integrability} is required in order to make sense of the continuity equation~\eqref{eq:continuity}. 
The conditions of eqs.~\eqref{eq:integrability} and~\eqref{eq:continuity} imply the continuity of the path $t \mapsto \rho(t,x) \rd x$ for the topology of weak convergence of probability measures, see e.g.\ \cite[Lemma~8.1.2]{ambrosio2008gradient}.
We are now ready to state the asymptotics of $I_d^\tbet$.
\begin{theorem}[\cite{matytsin1994large,guionnet2002large,guionnet2004addendum,guionnet2004first}]
    \label{thm:limit_hciz}
    Let $\mu, \nu \in \mathcal P(\bbR)$, and for each $d \ge 1$, let $A_d, B_d \in \mcH_d$, with empirical eigenvalue distributions $\hmu_{A_d}, \hmu_{B_d}$. Assume that
    \begin{enumerate}[label=(\roman*)]
        \item $\hmu_{A_d} \to \mu$ and $\hmu_{B_d} \to \nu$ as $d \to \infty$ (in the sense of weak convergence);
        \item There exists $K > 0 $ such that $\supp(\hmu_{A_d}) \subseteq [-K, K]$ and $\supp(\hmu_{B_d}) \subseteq [-K, K]$ for all $d \geq 1$;
    \end{enumerate}
    Then $I_\HCIZ(\mu, \nu) \coloneqq \lim_{d \to \infty} I_d^\tbet(A_d, B_d)$ exists, is independent of $\beta \in \{1, 2\}$, 
    and is continuous in both $\mu$ and $\nu$ with respect to the weak convergence on $\mcP([-K, K])$.

    \myskip
    Moreover, if we assume further that $\min\{\Sigma(\mu), \Sigma(\nu)\} > -\infty$, 
    then the following statements hold.
    \begin{enumerate}[label=(\arabic*)]
        \item 
        $I_\HCIZ(\mu, \nu)$ is given by
        \begin{equation}
            \label{eq:I_HCIZ_matytsin}
            I_\HCIZ(\mu, \nu) = - \frac{3}{4} + \frac{1}{2} \left[\int \mu(\rd x) x^2 + \int \nu(\rd x) x^2 \right] - \frac{1}{2} [\Sigma(\mu) + \Sigma(\nu)] - J(\mu, \nu),
        \end{equation}
        where
        \begin{align}
            \label{eq:def_J}
            J(\mu, \nu) \coloneqq \frac{1}{2} \inf_{(\rho, v) \in \adm(\mu, \nu)} \int_0^1 \rd t \, \int \rd x \, \rho(t,x) \Big[v(t,x)^2 + \frac{\pi^2}{3} \rho(t,x)^2\Big].
        \end{align}
        \item The infimum in eq.~\eqref{eq:def_J} is attained by a unique path $(\rho, v)$, which satisfies (in the weak sense) 
        the equation
        \begin{align}\label{eq:euler_2nd_equation}
            \partial_t (\rho v) + \partial_x \left[\rho v^2 - \frac{\pi^2}{3} \rho^3\right] = 0.
        \end{align}
        Further, 
        $(\rho, v)$ are smooth in the interior of $\Omega \coloneqq \{(t, x) \in (0,1) \times \bbR \, : \, \rho(t,x) > 0\}$, which is bounded.
        In particular, eqs.~\eqref{eq:continuity} and~\eqref{eq:euler_2nd_equation} hold everywhere in the interior of $\Omega$.
    \end{enumerate}
\end{theorem}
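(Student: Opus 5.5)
This theorem collects results from the literature, so the plan reconstructs the Guionnet--Zeitouni route rather than inventing a new one.

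\textbf{Step 1: Dyson Brownian motion representation.} For $\beta=2$, write the Gaussian integral of $\exp\{\frac{\beta d}{2}\Tr[OAO^\dagger B]\}$ as a transition density of the Hermitian Brownian motion. Concretely, consider $H_t = A_d + W_t$, where $W_t$ is a Hermitian Brownian motion normalised so that $W_1$ is a GUE matrix with semicircular limit. Its density at time $1$ at a point with spectrum $B_d$ is proportional to $\exp\{-\frac{\beta d}{4}\Tr(B-OAO^\dagger)^2\}$, averaged over $O$. Expanding the square, and using the Weyl change of variables to carry the Vandermonde factors, we get
\[
I_d(A_d,B_d)=\tfrac12\!\left[\tr A_d^2+\tr B_d^2\right]+\frac{2}{\beta d^2}\log \bbP\!\left(\hmu_{H_1}\approx \hmu_{B_d}\,\middle|\, H_0=A_d\right)-\tfrac12\left[\Sigma(\hmu_{A_d})+\Sigma(\hmu_{B_d})\right]+\text{const}+o(1).
\]
The Vandermonde terms require some care when eigenvalues coalesce. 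For that reason I would first regularise by assuming $\Sigma>-\infty$ and approximating $A_d,B_d$ by matrices with well-separated eigenvalues.

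\textbf{Step 2: large deviations for the spectral path.} Prove a large deviation principle at speed $d^2$ for the process $t\mapsto\hmu_{H_t}$ on $C([0,1],\mcP(\bbR))$. Its good rate function is
\[
S(\rho)=\tfrac12\int_0^1\!\int \rho\,v^2\,\rd x\,\rd t,
\]
where $v$ is the velocity in excess of the free (Hilbert-transform) drift. The upper bound comes from exponential martingales built from It\^o's formula applied to $\int f(t,x)\,\hmu_{H_t}(\rd x)$. The lower bound comes from tilting the drift along smooth paths.

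The contraction to the endpoint $t=1$ then gives the probability in Step 1. Expanding the square $(v-\pi H\rho)^2$ produces three contributions:
\begin{itemize}
\item a cross term, which integrates to the difference of entropies $\Sigma$ at the two endpoints;
\item the square of the Hilbert transform, which yields $\int\rho^3\pi^2/3$ through the identity $\int (H\rho)^2\rho=\frac{\pi^2}{3}\int\rho^3$ (here $H$ denotes the Hilbert transform, normalised as in Step 1);
\item the remaining terms, which produce the constant $-3/4$. This constant is fixed by the case $\mu=\nu=\delta_0$ rescaled, or equivalently by Gaussian computations.
\end{itemize}
Together these give eq.~\eqref{eq:I_HCIZ_matytsin}.

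\textbf{Main obstacle: the LDP lower bound.} I expect this step to be the hardest. The lower bound needs approximation of arbitrary finite-action paths in $\adm(\mu,\nu)$ by smooth ones with strictly positive densities, while preserving the action. I would try regularisation by free convolution with a small semicircle and time-rescaling, using convexity of $(\rho,\rho v)\mapsto \rho v^2$ and of $\rho^3$. This is the content of the Guionnet--Zeitouni addendum.

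\textbf{Continuity and the unrestricted part.} Independence of $\beta$ follows because the rate function does not depend on $\beta$ after the $2/\beta$ normalisation; for $\beta=1$ one uses real Brownian motion. Continuity in $(\mu,\nu)$ comes from the Lipschitz bound
\[
|I_d(A,B)-I_d(A',B)|\le \|B\|\,\tr|A-A'|,
\]
combined with coupling via quantiles. Existence of the limit without the entropy assumption then follows by approximating with measures of finite entropy.

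\textbf{Step 3: the minimiser.} The functional in $(\rho,m=\rho v)$ is strictly convex, giving uniqueness; existence follows from lower semicontinuity plus compactness. The Euler--Lagrange equations give $v=\partial_x\phi$ with $\partial_t\phi+\frac12 v^2-\frac{\pi^2}{2}\rho^2=0$, hence eq.~\eqref{eq:euler_2nd_equation}. For smoothness, set $f=v+i\pi\rho$. The system becomes the complex Burgers equation $\partial_t f+f\partial_x f=0$, which is solved by characteristics and analytic where $\rho>0$ (Guionnet--Zeitouni). Boundedness of $\Omega$ follows from compact supports and finite propagation of characteristics.
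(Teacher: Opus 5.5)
The paper gives no proof of this theorem; it quotes it from \cite{matytsin1994large,guionnet2002large,guionnet2004addendum,guionnet2004first}. Your Steps~1--2, the continuity argument, and the existence/uniqueness part of Step~3 are a fair outline of that Guionnet--Zeitouni route. In particular, the bound $|I_d(A,B)-I_d(A',B)|\le\|B\|\,\tr|A-A'|$ with quantile coupling is exactly how one passes from finite-entropy measures to arbitrary ones. You also correctly locate the crux in the lower bound; what is needed there is a lower bound along regular paths plus the fact that the infimum over $\adm(\mu,\nu)$ is approached by such paths, not a full LDP on path space.

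\textbf{The genuine gap is the rest of Step~3.} The system for $(\rho,v)$ is elliptic on $\{\rho>0\}$. For generic boundary data, $f=v+i\pi\rho$ has no holomorphic extension along which complex characteristics could be run; this is exactly the obstruction the paper recalls in its discussion of alternative approaches, following \cite{menon2017complex}. So ``solved by characteristics'' yields neither interior smoothness nor boundedness of $\Omega$, and an elliptic system has no finite-propagation mechanism.

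The missing ingredient, used in \cite{guionnet2004first}, is the free Brownian bridge representation \eqref{eq:dbb}, which falls out of your Step~1. Conditionally on $H_1$, the matrix process is the Brownian bridge $(1-t)A_d+tOB_dO^\dagger+\sqrt{t(1-t)}\,G$. Here $G$ is a Gaussian Wigner matrix independent of $O$, and $O$ follows the Haar measure tilted by $\exp\{\frac{\beta d}{2}\Tr[A_dOB_dO^\dagger]\}$. Asymptotic freeness and uniqueness of the minimizer then identify $\rho(t,x)\,\rd x$ with the law of $(1-t)X_0+tX_1+\sqrt{t(1-t)}\,S$. An operator-norm bound then gives $\supp\rho(t,\cdot)\subseteq[-K-1,K+1]$, hence boundedness of $\Omega$. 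The same representation exhibits $\rho(t,\cdot)$ as a free convolution with a semicircle of variance $t(1-t)$, whose density is real-analytic where positive (Biane). That, not characteristics, is the entry point to interior regularity.

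\textbf{Step~1 double counts the entropies.} Converting the spectral density of $H_1$ at $b$ into $\mathrm{P}(\hmu_{H_1}\approx\hmu_{B_d})$ brings in the Vandermonde of $b$ only. The identity should therefore read $I_d=\frac12[\tr A_d^2+\tr B_d^2]+\frac{2}{\beta d^2}\log\mathrm{P}(\cdots)-\Sigma(\hmu_{B_d})+c+o(1)$. The symmetric $-\frac12[\Sigma(\mu)+\Sigma(\nu)]$ appears only after adding the cross term $+\frac12[\Sigma(\nu)-\Sigma(\mu)]$ from $-\inf S$; combining them as you wrote would give $-\Sigma(\mu)$.

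\textbf{The constant.} There are no ``remaining terms'' in the expansion of the square. The value $c=-\frac34$ is the Gaussian/Weyl normalization. It is fixed most simply by $A_d=0$ with semicircular $B_d$: then $I_d=0$, $\mathrm{P}\approx 1$ and $\Sigma(\sigma_{\sci})=-\frac14$. It cannot be fixed by $\mu=\nu=\delta_0$, where the endpoint Vandermonde vanishes.

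\textbf{The Euler equation.} The multiplier derivation $v=\partial_x\phi$ is formal: $\phi$ must be constructed, and on $\{\rho=0\}$ the $\rho$-variation is one-sided, so the Hamilton--Jacobi relation is only an inequality there. To obtain \eqref{eq:euler_2nd_equation} weakly on all of $(0,1)\times\bbR$, use inner variations instead. Push the optimal path forward by $x\mapsto x+s\eta(t,x)$ with $\eta\in C^\infty_c((0,1)\times\bbR)$ and differentiate at $s=0$. This gives exactly $\int_0^1\!\int_{\bbR}[\rho v\,\partial_t\eta+(\rho v^2-\frac{\pi^2}{3}\rho^3)\,\partial_x\eta]\,\rd x\,\rd t=0$.
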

\noindent
We notice that having both measures uniformly compactly supported is not strictly necessary for some of these results.
For instance, one can define $I_\HCIZ(\mu, \nu)$ whenever $\mu$ is compactly supported and $\nu$ has bounded first moment: we refer to~\cite[Section~2]{belinschi2022large} for more details.

\myskip 
\textbf{Scaling parameter $\theta$ --}
Under the same assumptions as in Theorem~\ref{thm:limit_hciz}, we define
\begin{equation}\label{eq:def_Itheta_mu_nu}
I_\HCIZ(\theta, \mu, \nu) \coloneqq \lim_{d \to \infty} I_d^\tbet(\theta, A_d, B_d)
\end{equation}
where $I_d^\tbet$ is defined in eq.~\eqref{eq:def_Id_theta},
and we have 
\begin{equation}\label{eq:Itheta_Jtheta}
I_\HCIZ(\theta, \mu, \nu) = - \frac{3}{4} - \frac{1}{2} \log \theta + \frac{\theta}{2} \left[\int \mu(\rd x) x^2 + \int \nu(\rd x) x^2 \right] - \frac{1}{2} [\Sigma(\mu) + \Sigma(\nu)] - J(\theta, \mu, \nu),
\end{equation}
where $J(\theta, \mu, \nu) \coloneqq J(\sqrt{\theta} \# \mu, \sqrt{\theta} \# \nu)$ as given by eq.~\eqref{eq:def_J}. 
We will keep the shorthand notation $I_\HCIZ(\mu, \nu) = I_\HCIZ(\theta = 1, \mu, \nu)$.

\myskip
We now make a series of additional remarks on Theorem~\ref{thm:limit_hciz}.
\begin{enumerate}[label=(\Roman*)]
    \item 
    Interestingly, eqs.~\eqref{eq:continuity} and~\eqref{eq:euler_2nd_equation} can be interpreted as the 
    hydrodynamical description of a fluid under an \emph{attractive} force, i.e.\ a negative pressure field $P = - (\pi^2 / 3) \rho^3$~\cite{bun2014instanton}.
    Another key interpretation of these dynamics arises from random matrix theory, see e.g.~\cite{guionnet2023rare}. 
    To describe this interpretation, let us consider $\beta = 1$ for simplicity.
    One can show that $\rho(t,x) \, \rd x$ is the limiting spectral distribution of the constrained Dyson Brownian motion
    \begin{equation}\label{eq:dbm_def}
        M_t \coloneqq A_d + \sqrt{t} W,
    \end{equation}
    where $(W_{ij})_{i \leq j} \iid \mcN(0, (1 + \delta_{ij})/d)$, under the constraint that $M_{t=1}$ has asymptotic eigenvalue density equal to $\nu$.
    This interpretation is also key to some derivations of Theorem~\ref{thm:limit_hciz}, which use this relation of the HCIZ integral to the large deviations of the Dyson Brownian motion~\cite{guionnet2002large,guionnet2004addendum,bun2014instanton}.
    \item Notice that the limit in eq.~\eqref{eq:I_HCIZ_matytsin} is independent of $\beta \in \{1, 2\}$. This is a generic property known as ``Zuber's $1/2$ rule'~\cite{zuber2008large,bun2014instanton}.
    \item 
    It was shown in~\cite[Theorem~6]{collins2007new} that (under some technical assumptions)
    \begin{equation}\label{eq:high_to_low_rank}
        \lim_{a \to 0} \left[\frac{1}{a} I_\HCIZ((1-a) \delta_0 + a \mu, \nu)\right] = \int_\bbR \tI_\nu(t) \mu(\rd t),
    \end{equation}
    where (with $\hmu_B \to \nu$ as $d \to \infty$)
    \begin{equation*}
        \tI_\nu(t) \coloneqq \lim_{d \to \infty}\frac{2}{\beta d} \log \int_{\mcU_\beta(d)} \rd \rho_\beta(O) \exp\left\{\frac{\beta d t}{2} (O^\dagger B_d O)_{11}\right\}
    \end{equation*}
    is the ``rank-one'' HCIZ integral.
    While eq.~\eqref{eq:high_to_low_rank} shows a smooth transition between the asymptotic formulas for the ``low-rank'' and ``extensive-rank'' regimes of the HCIZ integral, 
    the proof of~\citet{collins2007new} shows it using bounds on the finite-dimensional integral. 
    As pointed out in~\cite{guionnet2023rare},
    it is an interesting question to derive eq.~\eqref{eq:high_to_low_rank} directly from Theorem~\ref{thm:limit_hciz} instead, 
    and to characterize the behavior of the trajectories when one of the two densities has vanishingly small rank.
\end{enumerate}

\myskip 
\textbf{The need for a numerical scheme --}
Somewhat disappointingly, explicit analytical solutions to the hydrodynamical problem of Theorem~\ref{thm:limit_hciz} are very scarce.
To our knowledge, the following cases are the only ones amenable to an exact analytical treatment.
\begin{enumerate}
    [label=\textbf{(C\arabic*)},ref=(C\arabic*)]
    \item
    For any $\mu, \nu$ satisfying the hypotheses of Theorem~\ref{thm:limit_hciz}, one can expand $I_\HCIZ(\theta, \mu, \nu)$ in powers of $\theta$ as $\theta \downarrow 0$ (a \emph{high-temperature} expansion)~\cite{collins2003moments}.
    Conversely, there are formulas 
    in the physics literature~\cite{bun2014instanton}
    for the first orders of the expansion of $I_\HCIZ(\theta, \mu, \nu)$ as $\theta \to \infty$ (a \emph{low-temperature} expansion).
    \item
    \label{case:free_conv}
    If $\nu = \mu \boxplus \sigma_{\sci}$, where 
    \begin{equation*}
        \sigma_{\sci}(\rd x) \coloneqq \frac{\sqrt{4-x^2}}{2\pi} \indi\{|x| \leq 2\} \rd x
    \end{equation*}
    is the semicircle law and $\boxplus$ is the additive free convolution~\cite{anderson2010introduction}, then the Dyson Brownian motion in eq.~\eqref{eq:dbm_def} is unconstrained, as $\nu$ corresponds then to the \emph{typical} eigenvalue density of $M_{t=1}$.
    Writing $\rho_t \coloneqq \rho(t,x) \, \rd x$ for the probability measure at time $t$, we have (see e.g.\ \cite{schmidt2018statistical,maillard2022perturbative}) that $\rho_t = \mu \boxplus \sigma_{\sci, \sqrt{t}}$ for any $t \in [0,1]$, where $\sigma_{\sci, r}$ is the semicircle law with variance $r^2$.
    Further,
    this allows us to express $I_\HCIZ(\mu, \mu \boxplus \sigma_{\sci})$ in terms of a simple Gaussian integral and obtain the formula 
    \begin{equation}
    \label{eq:I_conv}
        I_\HCIZ(\mu, \mu \boxplus \sigma_{\sci}) = - \frac{1}{4} - \Sigma(\mu \boxplus \sigma_{\sci}) + \int \mu(\rd x) x^2.
    \end{equation}
    The measure $\rho_t$ can be computed analytically from the knowledge of $\mu$ using its $\mcR$-transform: we refer to~\cite{anderson2010introduction,potters2020first} for 
    more details.
    Moreover, the velocity field can be expressed as:
    \begin{equation}\label{eq:vt_free_conv}
        v(t,x) = - \lim_{\eps \downarrow 0} \Re[g_{\rho_t}(x + i \eps)],
    \end{equation}
    where we define the Stieltjes transform of any probability measure $\mu \in \mcP(\bbR)$ as 
    \begin{equation}\label{eq:def_stieltjes}
       g_\mu(z) \coloneqq \int \frac{1}{x - z} \rd \mu(x),
    \end{equation}
    with $z \in \bbC_+ \coloneqq \{w \in \bbC \, : \, \Im(w) > 0\}$.
    \item
    \label{case:semicircle}
    In~\cite{bun2014instanton}, it is shown that when $\mu$ and $\nu$ are both centered\footnote{Centering can be assumed without loss of generality, since $I_d^{(\beta)}(A, B) = I_d^{(\beta)}[A - \tr(A) \Id_d, B - \tr(B) \Id_d] + \tr(A) \tr(B)$ for any $A, B \in \mcH_d$.} semicircular laws, the measure $\rho_t$ remains semicircular for any $t \in [0,1]$, 
    and  $I_\HCIZ(\mu, \nu)$ admits a simple expression. 
    Without loss of generality, we can consider $I_\HCIZ(\theta, \mu, \nu)$ for $\mu = \nu = \sigma_{\sci}$, which yields:
    \begin{equation}\label{eq:HCIZ_analytical_sc_to_sc}
        I_\HCIZ(\theta, \sigma_{\sci}, \sigma_{\sci}) = 
        \frac{1}{2} \left[\sqrt{1+4\theta^2} - 1 - \log \left(\frac{1+\sqrt{1+4\theta^2}}{2}\right)\right].
    \end{equation}
    \item 
    Finally, one can leverage the finite-dimensional determinantal formula of eq.~\eqref{eq:hciz_finite} if $\mu$ or $\nu$ is the uniform distribution over a real interval. 
    Indeed, in this case the computation of the determinant in eq.~\eqref{eq:hciz_finite} reduces to computing 
    \begin{equation*}
        \det \left[\left(e^{j \rho_k}\right)_{1 \leq j,k \leq d}\right] = \det \left[\left((e^{\rho_k})^j\right)_{1 \leq j,k \leq d}\right] = \prod_{j < k} \left|e^{\rho_j} - e^{\rho_k}\right|,
    \end{equation*}
    for some coefficients $(\rho_j)_{j=1}^d$. One can then exploit this Vandermonde determinant to extract high-dimensional asymptotics for the HCIZ integral, see e.g.~\cite{grela2021non} for an example
    in the physics literature.
\end{enumerate}
While these specific solvable scenarios serve as the backbone for some of the statistical applications mentioned previously, computing $I_\HCIZ(\mu, \nu)$ for generic distributions $\mu, \nu$ has largely remained mathematically intractable.
In this work, we overcome this limitation by designing a convergent numerical discretization scheme capable of evaluating the extensive-rank HCIZ integral for generic boundary densities.
We benchmark our algorithm against several of these known analytical cases, and open the door to the numerical exploration of a wide range of previously intractable high-dimensional models.
In the context of high-dimensional statistics, we anticipate that this will enable the exploration of much more general settings beyond Bayes-optimal estimation or extreme-temperature limits of free energies, 
and we plan to explore some of these applications in subsequent work.

\subsection{Main results}

As is clear from eq.~\eqref{eq:I_HCIZ_matytsin}, in order to compute $I_\HCIZ(\mu, \nu)$ for arbitrary $(\mu, \nu)$, it is enough to compute the functional $J(\mu, \nu)$ defined in eq.~\eqref{eq:def_J}. 
Recall that we have 
\begin{equation}\label{eq:def_J_2}
    J(\mu,\nu) \coloneqq \inf_{(\rho, v) \in \adm(\mu,\nu)} G(\rho, v),
\end{equation}
with 
\begin{equation}\label{eq:def_G}
    G(\rho,v) \coloneqq \frac 1 2 \int_0^1 \rd t \int_\bbR \rd x \, \left(v(t,x)^2 + \frac {\pi^2}{3} \rho(t,x)^2\right) \rho(t,x).
\end{equation}
Our main theoretical result is the validity of a discretization scheme to compute $J(\mu, \nu)$ in eq.~\eqref{eq:def_J_2} up to arbitrary accuracy. 
We first define the scheme, before stating our theorem.

\subsubsection{The discretization scheme}\label{subsubsec:def_discretization}

We fix $\mu_0$, $\mu_1 \in \mcP(\bbR)$ with $\Sigma(\mu_0)$ and $\Sigma(\mu_1)$ finite. 
We design a particle-based approximation scheme to compute $J(\mu_0,\mu_1)$, 
keeping time as a continuous variable for now.

\myskip
For every integer $N \ge 1$, we denote by $\adm_N(\mu_0,\mu_1)$ the set of all trajectories $\{(x_i, v_i)\}_{i=1}^N$, with $x_i, v_i \in \mcC([0,1], \bbR)$, that satisfy the following constraints. 
\begin{enumerate}[label=\textbf{(H\arabic*)},ref=(H\arabic*)]
\item
\label{bound_constraint_admn}
$(x_i(0))_{1 \le i \le N}$ and $(x_i(1))_{1 \le i \le N}$ are fixed such that for every $i \in \{1,\ldots, N\}$:
\begin{equation}
\label{eq:def_xi_01}
    \int_{-\infty}^{x_{i}(0)} \rd \mu_0(x) = \frac{i}{N+1} \quad \text{ and } \quad \int_{-\infty}^{x_{i}(1)} \rd \mu_1(x) = \frac{i}{N+1}.
\end{equation}
We may at times find it convenient to also use the convention $x_0(t) = -\infty$ and $x_{N+1}(t) = +\infty$ for every $t \in [0,1]$. 
\item
\label{noncrossing_constraint_admn}
For every $i \in \{1,\ldots, N\}$ and $t \in [0,1]$, we have $x_{i}(t) \le x_{i+1}(t)$.
\item 
\label{velocity_constraint_admn}
For every $i \in \{1,\ldots, N\}$ and $t \in [0,1]$:
\begin{equation*}  
x_i(t) = x_i(0) + \int_0^t v_i(s) \, \rd s.
\end{equation*}
\end{enumerate}
Notice that $\mu_0$ and $\mu_1$ have no atoms since $\Sigma(\mu_0)$ and $\Sigma(\mu_1)$ are finite, which ensures 
that $(x_i(0))_i, (x_i(1))_i$ in eq.~\eqref{eq:def_xi_01} are well-defined and strictly increasing.
Within the set $\adm_N(\mu_0, \mu_1)$, the trajectories $x_1,\ldots, x_N$ are affine functions of the trajectories $v_1,\ldots, v_N$. 
By~\ref{velocity_constraint_admn} we must have:
\begin{equation}\label{eq:boundary_v_admn}
        \int_0^1 v_i (s) \, \rd s = x_i(1) - x_i(0) \hspace{10pt} \textrm{ for all } i \in [N].
\end{equation}
Moreover, \ref{noncrossing_constraint_admn} imposes that for every $i \in \{1,\ldots, N-1\}$ and $t \in [0,1]$:
\begin{equation}\label{eq:noncrossing_v_admn}
\int_0^t (v_{i+1}(s) - v_i(s))\, \rd s \ge  x_{i}(0) - x_{i+1}(0).
\end{equation}
Conversely, any $(v_1, \cdots, v_N)$ satisfying eqs.~\eqref{eq:boundary_v_admn} and~\eqref{eq:noncrossing_v_admn} uniquely defines an element of $\adm_N(\mu_0, \mu_1)$.
Finally, for every $\{(x_i, v_i)\}_{i=1}^N \in \adm_N(\mu_0,\mu_1)$, and any $\eps > 0$, we define
\begin{equation}  \label{eq:def_GNeps}
G_{N,\eps}(\{(x_i, v_i)\}) 
\coloneqq \frac 1 {2N} \sum_{i = 1}^N \int_0^1\left( v_i(t)^2 + \frac{\pi^2}{3(N+1)^2} \left( x_{i+1}(t) - x_i(t) + \frac \eps {N+1} \right) ^{-2}  \right) \rd t,
\end{equation}
and we set
\begin{equation}\label{eq:def_JN}
J_{N,\eps}(\mu_0,\mu_1) \coloneqq \inf_{\adm_N(\mu_0,\mu_1)} G_{N,\eps}. 
\end{equation}

\subsubsection{Validity of the discretization scheme}

The following theorem shows that the discretization scheme introduced above
asymptotically approaches the value of the HCIZ integral.
\begin{theorem}[Validity of the numerical method]
\label{thm:main}
Let $\mu_0, \mu_1 \in \mcP_c(\bbR) \cap \mcP_\abc(\bbR)$ be
such that $\mu_0(\rd x) = \rho_0(x) \rd x$, $\mu_1(\rd x) = \rho_1(x) \rd x$ with $\rho_0, \rho_1 \in L^3(\bbR)$. 
We have 
\begin{equation*} 
\lim_{\eps \to 0} \liminf_{N \to \infty} J_{N,\eps}(\mu_0, \mu_1) = \lim_{\eps \to 0} \limsup_{N \to \infty} J_{N,\eps}(\mu_0, \mu_1) = J(\mu_0, \mu_1).
\end{equation*}
\end{theorem}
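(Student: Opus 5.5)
We prove the two inequalities $\limsup \le J$ and $\liminf \ge J$ separately, both by comparing the particle system with a continuous density built from it. The link is the quantile representation: to $N$ ordered particles $x_1(t)<\dots<x_N(t)$ we associate the piecewise-constant density
\begin{equation*}
\rho^N(t,x) \coloneqq \frac{1}{N+1}\sum_{i} \frac{\mathbf 1\{x_i(t)\le x< x_{i+1}(t)\}}{x_{i+1}(t)-x_i(t)},
\end{equation*}
with small boundary corrections near $x_1$ and $x_N$. Its cubic energy is
\begin{equation*}
\int (\rho^N)^3\,\rd x=\sum_i (N+1)^{-3}(x_{i+1}-x_i)^{-2},
\end{equation*}
which matches the second term of $G_{N,\eps}$ up to the factor $N/(N+1)$ and the regularization $\eps/(N+1)$. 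For the velocity, we take $v^N$ on $[x_i,x_{i+1})$ to be the linear interpolation of $v_i,v_{i+1}$. This choice makes the continuity equation hold exactly, since it is the Lagrangian flow of the quantile map. By convexity, $\int \rho^N (v^N)^2 \le \frac1N\sum_i v_i^2$ up to boundary terms.

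\textbf{Upper bound.} Fix $\delta>0$ and take a near-optimizer $(\rho,v)\in\adm(\mu_0,\mu_1)$ with $G(\rho,v)\le J+\delta$. We regularize it so that $\rho$ is smooth and bounded below on its support, with the support an interval at each time. One way is to mix with a small multiple of a fixed nice path, such as a time reparametrization or short free-convolution layers near $t=0,1$. The $L^3$ hypothesis on $\rho_0,\rho_1$ is what makes the endpoint layers cost $o(1)$: near the endpoints the cost $\int\rho^3$ is controlled by $\|\rho_0\|_3^3$ and $\|\rho_1\|_3^3$, and convexity of $(\rho,m)\mapsto m^2/\rho+\rho^3$ keeps the mixtures under control. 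We then define particles as quantiles, $x_i(t)=F_t^{-1}(i/(N+1))$, which satisfy \ref{bound_constraint_admn}–\ref{velocity_constraint_admn} with $v_i(t)=v(t,x_i(t))$. For smooth $\rho$, the Riemann sums give $\frac{1}{N}\sum_i v_i^2 \to \int\rho v^2$. The spacings satisfy $x_{i+1}-x_i\approx 1/((N+1)\rho(x_i))$, so the interaction term tends to $\frac{\pi^2}{3}\int\rho^3$, and adding $\eps/(N+1)$ only lowers it. Hence $\limsup_N J_{N,\eps}\le J+C\delta$ for every $\eps$.

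\textbf{Lower bound.} Take near-minimizers of $G_{N,\eps}$ and build $(\rho^N,v^N)$ as above. The energy bound gives uniform control of $\int\!\!\int \rho^N|v^N|^2$. Combined with the fixed boundary quantiles, this yields tightness in $t$ (a Wasserstein-$2$ Hölder bound, since $W_2(\rho^N_t,\rho^N_s)^2\le |t-s|\frac1N\sum_i\int v_i^2$) and compactness of the momenta $m^N=\rho^N v^N$ as measures. The cubic term needs care because of $\eps$. We restrict to "good" gaps, where $x_{i+1}-x_i\ge \eps'/(N+1)$ with $\eps'\gg\eps$; there $(x_{i+1}-x_i+\eps/(N+1))^{-2}\ge (1-O(\eps/\eps'))(x_{i+1}-x_i)^{-2}$. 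On the bad gaps, $\rho^N\ge 1/\eps'$ but the particle count is controlled. We then pass to a limit $(\rho,m)$ with $\rho^N\to\rho$ and use joint lower semicontinuity of the convex functional $\int (m^2/\rho+\frac{\pi^2}{3}\rho^3)$. Once $\eps\to0$, this gives $\liminf_N J_{N,\eps}\ge J - o_\eps(1)$.

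\textbf{Main obstacle.} The hardest step is exactly this interplay in the lower bound. The $\eps$-regularization caps the cubic penalty, so a limit density could in principle concentrate mass at densities of order $1/\eps$ at low cost. We must show that this concentration is suppressed as $\eps\to0$, or that it is harmless because $\int\rho^3$ over such regions still yields at least $\epsilon^{-2}\times$(mass), which would blow up. We would first show, via a truncation argument on $\min(\rho^N,1/\eps')$, that the truncated cubic functional is lower semicontinuous. Then we would let $\eps'\to0$ after $\eps\to0$ and apply monotone convergence, identifying the limit as an admissible pair in the sense of Definition~\ref{def:adm}. The weak continuity equation is preserved since it is linear in $(\rho,m)$, and the boundary conditions hold by the Wasserstein equicontinuity.
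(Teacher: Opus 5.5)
Your $\liminf$ direction (particle paths turned into a piecewise-constant density with piecewise-affine velocity, then compactness and convex lower semicontinuity) is essentially the paper's, but your $\limsup$ direction has a genuine gap at the endpoints. By \ref{bound_constraint_admn} the particles must start at the quantiles of $\mu_0$ itself, so any path you discretize must equal $\rho_0$ at $t=0$. There it cannot be smooth, bounded below, or supported on an interval. Free-convolution layers do not change this. Mixing with a fixed path either leaves $\rho(0)=\rho_0$ (if that path starts at $\mu_0$) or moves the endpoint away from $\mu_0$, which then itself has to be bridged. If $\supp\mu_0$ is disconnected, or $\rho_0$ has jumps, no velocity field that is Lipschitz in $x$ can carry it to a smooth density with interval support, since its flow is a $C^1$-diffeomorphism. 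So on your layers $v\notin\reg$, and the step ``for smooth $\rho$, the Riemann sums give $\frac1N\sum_i v_i^2\to\int\rho v^2$'' is unjustified. Indeed, $\frac1N\sum_i\int_0^\eta v(s,x_i(s))^2\,\rd s$ is a Riemann sum, in the quantile variable, of a possibly unbounded and irregular function. Your remark that $L^3$ controls $\int\rho^3$ on the layers concerns the continuum cost, not $G_{N,\eps}$ of the particles. (The cubic part of quantile particles can in fact be bounded at each time by H\"older, but the kinetic part cannot.)

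What is missing is a bridging estimate carried out directly at the discrete level, which is Lemma~\ref{l.bridging}. The paper proceeds as follows.
\begin{itemize}
\item It takes the compactly supported minimizer and extends it constantly outside $[0,1]$.
\item It mixes it with $\frac{\delta_1}{2M}\indi_{[-M,M]}$ and convolves in space-time. Then $v_\delta\in\reg$, but the endpoint laws become smooth measures $\mu_{0,\delta},\mu_{1,\delta}$.
\item It compresses this regular flow to $[\eta,1-\eta]$ and discretizes it via Proposition~\ref{p.discretize}.
\item On $[0,\eta]$ it linearly interpolates the quantiles of $\mu_0$ and $\mu_{0,\delta}$, and symmetrically near $t=1$.
\end{itemize}
On the bridge, the kinetic part equals $\frac{1}{2\eta N}\sum_i(x_i(\eta)-x_i(0))^2\to\frac{1}{2\eta}W_2(\mu_0,\mu_{0,\delta})^2$, by Riemann integrability of a BV integrand. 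The cubic part is bounded by Jensen for $z\mapsto z^{-2}$ along the affine spacings, together with H\"older: $(N+1)^{-3}\le(x_{i+1}-x_i)^2\int_{x_i}^{x_{i+1}}\rho_0^3$. This, along with the cost $(\eta+\delta_2)\frac{\pi^2}{6}\int(\rho_0^3+\rho_1^3)$ of the constant extension, is where $\rho_0,\rho_1\in L^3$ is really used. Limits are then taken as $N\to\infty$, then $\delta\to0$, then $\eta\to0$. If your layers were displacement interpolations, your quantile particles would be exactly these linear interpolations, so the fix fits your framework, but the discrete estimate has to be supplied.

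\textbf{Secondary issues.} ``Bounded below on its support'' and $v\in\reg$ require a compactly supported (near-)optimizer, which an arbitrary one need not be; the paper invokes Theorem~\ref{t.exist.min} for this. In the $\liminf$ direction, $x\mapsto\min(x,1/\eps')^3$ is not convex, so its integral is not lower semicontinuous under the weak convergence you have. Densities oscillating finely between $0$ and $2/\eps'$ give a counterexample. You need instead a convex minorant of $x^3/(1+\eps x)^2$, such as the paper's $\chi_K$ (cubic up to $K$, affine beyond), used along a diagonal sequence $N_k\to\infty$, $\eps_k\to0$. Absolute continuity of the limit then comes from $\int\!\!\int\indi_{\{\rho\ge M\}}\rho\le CM^{-2}$. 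Finally, your good/bad gap splitting is unnecessary. For the piecewise-constant density, the regularized cubic term of $G_\eps$ gives exactly $(N-1)^{-3}(x_{i+1}-x_i+\frac{\eps}{N-1})^{-2}$ per gap. The paper therefore gets $G_\eps\le(\frac{N+1}{N-1})^3G_{N,\eps}$ directly and reuses the argument of Proposition~\ref{p.gamma.Gep.G}.
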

\noindent
Notice that the actual numerical scheme that we implement in Section~\ref{sec:numerics} also involves a discretization of the time variable into small intervals. 
It is however straightforward to verify that for each fixed $N$ and $\eps$, this fully discretized optimization problem converges to $J_{N,\eps}(\mu,\nu)$ as the time-step gaps are sent to zero,
and we focus on proving Theorem~\ref{thm:main}.

\subsubsection{First applications}

In Section~\ref{sec:numerics}, we present an efficient numerical scheme, which involves an additional time discretization step, 
and is based on the Newton update combined with a preconditioned conjugate gradient method. 
We also discuss potential alternatives, as well as limitations of our scheme, in Section~\ref{sec:numerics}. 
Here, we present some applications, focusing first on the known ``benchmark'' cases mentioned above.

\myskip 
\textbf{1. Free convolution~\ref{case:free_conv} --}
We consider~\cref{case:free_conv}, i.e.\ $\nu = \mu \boxplus \sigma_{\sci}$. 
\begin{figure}[!htbp]
    \centering
    \includegraphics[width=1.0\textwidth]{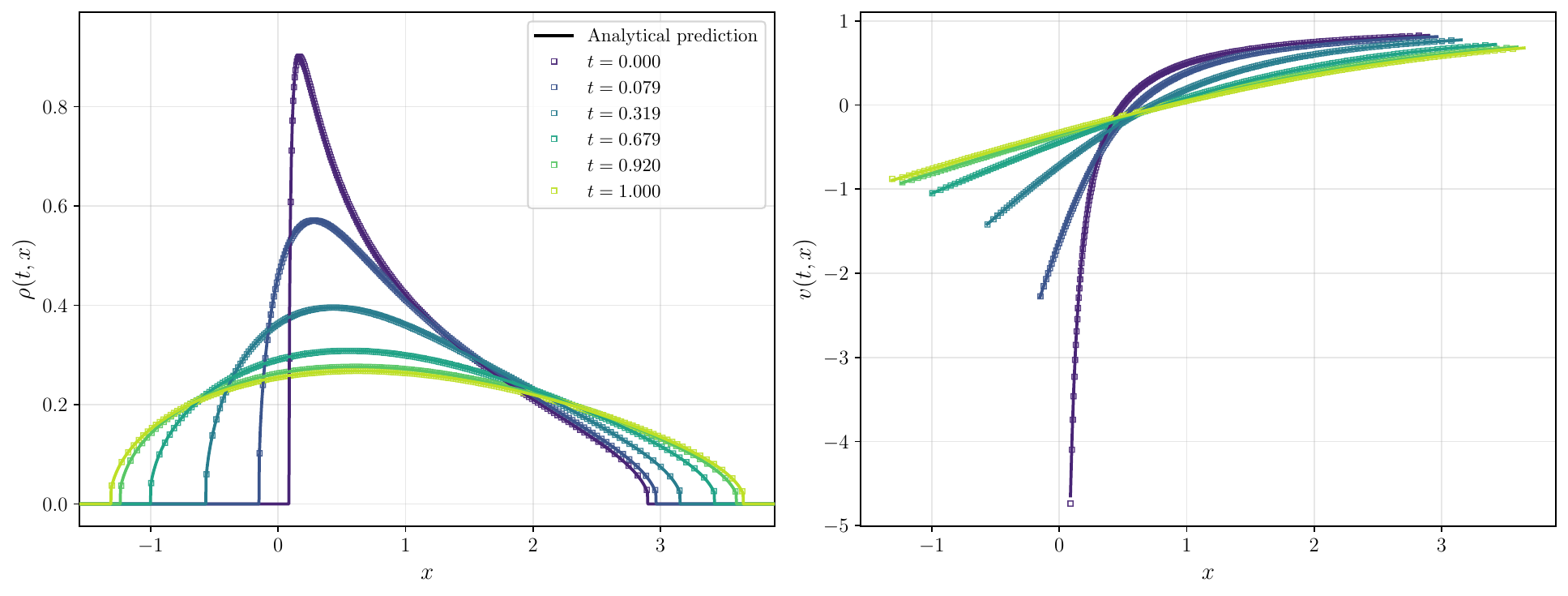}
    \caption{
        The density $\rho(t, x)$ (left) and the velocity field $v(t, x)$ (right) when $\mu = \mu_{\MP, \kappa}$ is the Marchenko-Pastur density with ratio $\kappa = 2$ (see eq.~\eqref{eq:mp_distribution}),
        and $\nu = \mu \boxplus \sigma_{\sci}$. We show the values of the density and velocity at various times $t \in [0,1]$. Solid lines are the analytical predictions detailed in~\ref{case:free_conv}, 
        and squares represent a sample of $200$ particles. 
        For this plot, we used $N = 2^{12}$ total particles, and $T = 2^{10}$ time steps, see Section~\ref{sec:numerics} for more details.
    }
    \label{fig:benchmark_freeconv}
\end{figure}
In Fig.~\ref{fig:benchmark_freeconv}, we show the density and velocity evolution in the example where $\mu = \mu_{\MP, \kappa}$ is the Marchenko-Pastur density with ratio $\kappa \geq 1$~\cite{marchenko1967distribution}:
\begin{equation}\label{eq:mp_distribution}
\frac{\rd\mu_{\MP, \kappa}}{\rd x}=
\frac{\kappa}{2\pi}
\frac{\sqrt{(\lambda_+-x)(x-\lambda_-)}}{x} \indi\{\lambda_- \leq x \leq \lambda_+\},
\qquad
\lambda_\pm\coloneq(1\pm\kappa^{-1/2})^2,
\end{equation}
and we compare our numerical solver to the free convolution 
$\rho_t = \mu \boxplus \sigma_{\sci, \sqrt{t}}$ and the velocity field $v$ given by eq.~\eqref{eq:vt_free_conv}.
We find excellent agreement with the predictions.

\myskip 
\textbf{2. Semicircular laws~\ref{case:semicircle} --}
In Fig.~\ref{fig:benchmark_semicircle}, we benchmark our method in the case where $\mu_0 = \mu_1 = \sigma_\sci$, 
while varying the scaling parameter $\theta$.
\begin{figure}[!htbp]
    \centering
    \includegraphics[width=0.75\textwidth]{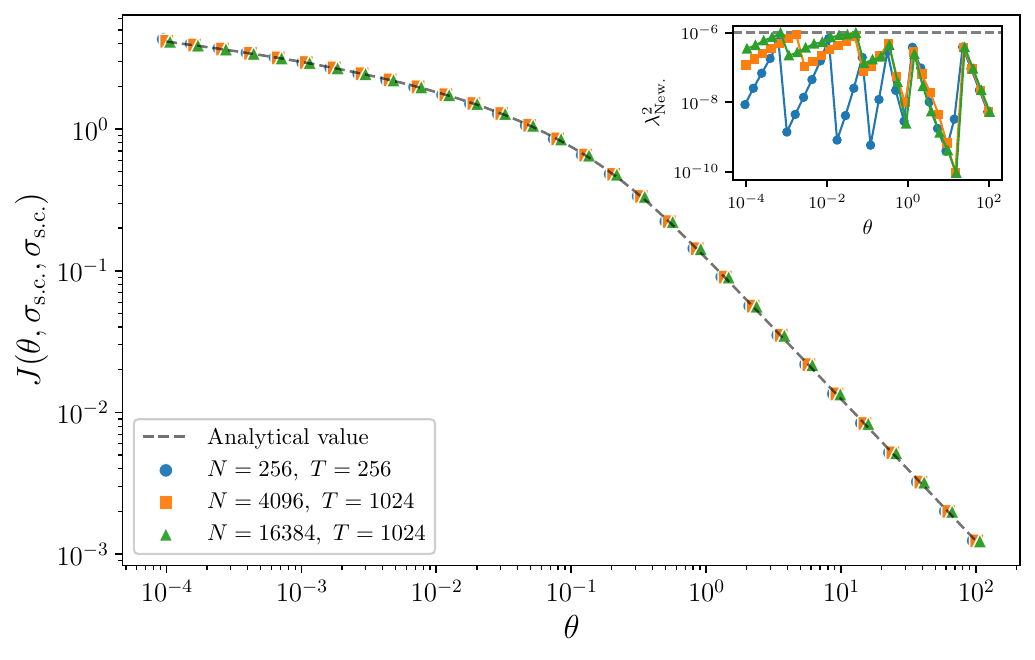}
    \caption{
        The value of $J(\theta, \sigma_{\sci}, \sigma_{\sci})$, as computed by Algorithm~\ref{algo:newton_cg}, against the asymptotic value of eq.~\eqref{eq:J_theta_sc_to_sc}.
        This comparison is perfomed for a wide range of values of $\theta$, and different values of $(N,T)$.
        The inset shows the Newton decrement at the solution (see Section~\ref{sec:numerics}).
    }
    \label{fig:benchmark_semicircle}
\end{figure}
We compare the predictions to the analytical result of~\cite{bun2014instanton}, cf.\ eq.~\eqref{eq:HCIZ_analytical_sc_to_sc}.
 Equivalently, by eq.~\eqref{eq:Itheta_Jtheta}:
\begin{equation}\label{eq:J_theta_sc_to_sc}
    J(\theta, \sigma_{\sci}, \sigma_{\sci}) = \theta - \frac{1}{2} \log \theta - \frac{\sqrt{1+4\theta^2}}{2} + \frac{1}{2} \log \left(\frac{1+\sqrt{1+4\theta^2}}{2}\right).
\end{equation}
We find excellent agreement, validating here as well our method, for both large and small values of the scaling parameter $\theta$, 
and already at relatively small values of $N, T$.

\myskip
\textbf{3. Singular boundary measures --}
For $\mu = \mu_{\MP,1}$ as in \eqref{eq:mp_distribution} and $\nu = \mu \boxplus \sigma_{\sci, 1/\theta}$, we compute $I_\HCIZ(\theta, \mu, \nu)$ for a range of values of $\theta$ 
    and compare to the analytical predictions. 
    This is Fig.~\ref{fig:benchmark_theta_mp_free_convolution}.
    Note that here the density of $\mu$ is not in $L^3$ since $\rd\mu(x)/\rd x \sim x^{-1/2}$ as $x \downarrow 0$, but the formula in \eqref{eq:I_conv}, suitably generalized to arbitrary $\theta$, is still valid. We can thus check that our solver also performs very well in this case, 
    suggesting that the $L^3$ assumption in Theorem~\ref{thm:main} might not be necessary.
\begin{figure}[!htbp]
    \centering
    \includegraphics[width=0.75\textwidth]{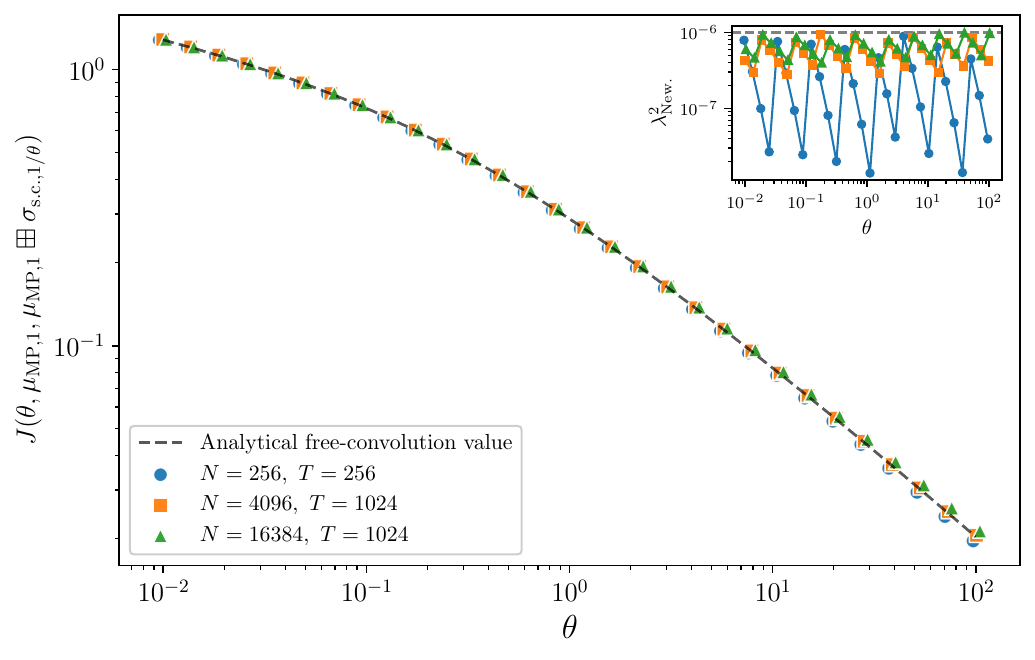}
    \caption{
        Similar quantities as in Fig.~\ref{fig:benchmark_semicircle}, 
        but for $\mu = \mu_{\MP,1}$ and $\nu = \mu_{\MP,1} \boxplus \sigma_{\sci, 1/\theta}$.
    }
    \label{fig:benchmark_theta_mp_free_convolution}
\end{figure}

\myskip 
\textbf{4. Application to generic densities --}
Crucially, our method applies to generic boundary measures $(\mu, \nu)$ for which the value of $I_\HCIZ(\mu, \nu)$ is not known. 
\begin{figure}[!htbp]
    \centering
    \includegraphics[width=1.0\textwidth]{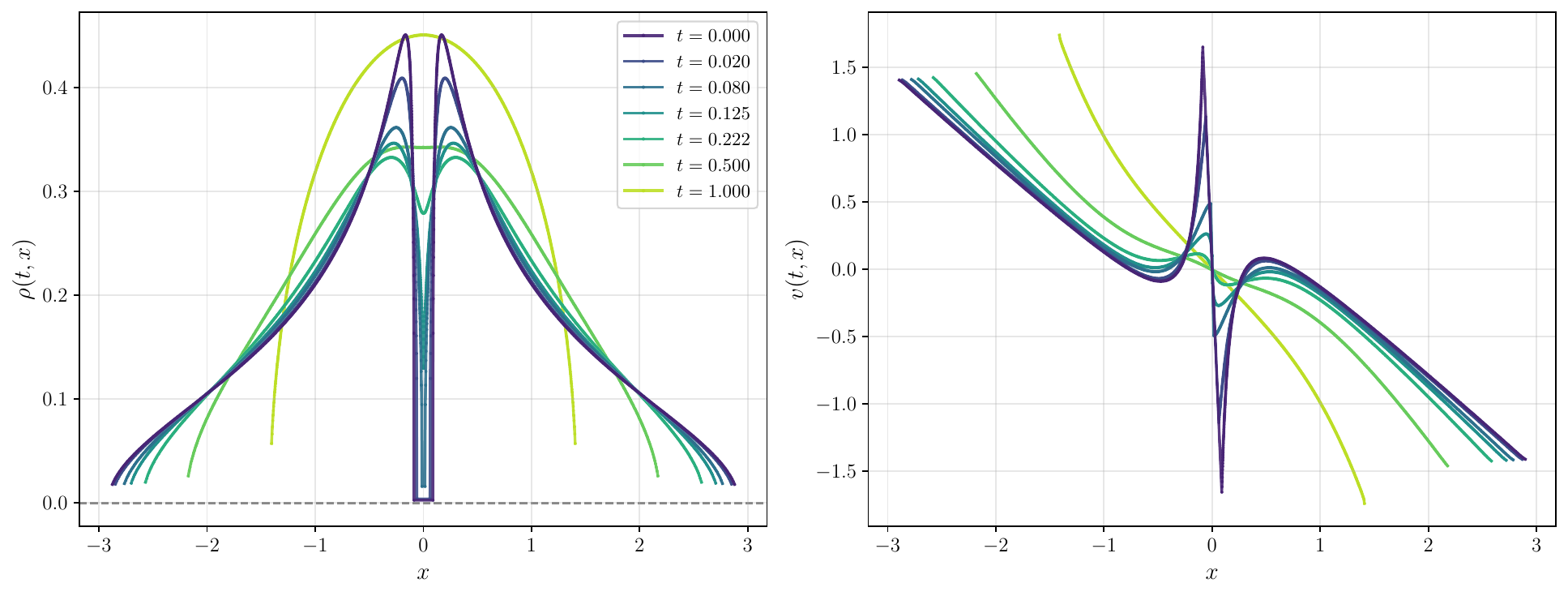}
    \caption{
        The density $\rho(t, x)$ (left) and the velocity field $v(t, x)$ (right) when $\mu$ is the \emph{symmetrized} Marchenko-Pastur density with ratio $\kappa = 2$ (see eq.~\eqref{eq:sym_MP}), and 
        $\nu = \sigma_{\sci, 0.5}$, a semicircle law with variance $1/2$.
        We show the values of the density and velocity at various times $t \in [0,1]$. 
        Squares represent a sample of the particles, and the solid lines are linear interpolations of the density and velocity evaluated from the numerical discretization. 
        For this plot, we used $N = 2^{12}$ total particles, and $T = 2^{10}$ time steps, see Section~\ref{sec:numerics} for more details.
    }
    \label{fig:sym_mp_to_sc}
\end{figure}
As a first example, we show in Fig.~\ref{fig:sym_mp_to_sc} 
the trajectories found by our numerical solver when $\nu = \sigma_{\sci,1/2}$ is a semicircle with variance $1/2$, and $\mu$ is the \emph{symmetrized} Marchenko-Pastur density for 
$\kappa \geq 1$: 
\begin{equation}\label{eq:sym_MP}
    \frac{\rd\mu}{\rd x}=
    \frac{\kappa}{4\pi}
    \frac{\sqrt{(\lambda_+-|x|)(|x|-\lambda_-)}}{|x|} \indi\{\lambda_- \leq |x| \leq \lambda_+\},
    \qquad
    \lambda_\pm\coloneq(1\pm\kappa^{-1/2})^2,
\end{equation}
Notice that the initial measure $\mu$ has disconnected support components, which merge during the evolution at around $t_{\rm merge} \simeq 0.08$. We also witness the behavior of the velocity field $v$ around the merging time numerically; since the components are merging, the velocity field cannot be Lipschitz continuous, and we witness its rather singular behavior numerically. 
In this setting, no analytical solution is known to the variational problem of eq.~\eqref{eq:def_J}, and our work
is the first to provide a simple numerical scheme to evaluate the asymptotic value of the HCIZ integral and the corresponding trajectories.

\myskip 
\textbf{5. Singular matrices and regularization --}
Let $\mu, \nu \in \mcP_c(\bbR)$, 
and assume that either $\mu$ or $\nu$ is singular: for concreteness we assume $\Sigma(\mu) = -\infty$ while $\Sigma(\nu) > -\infty$.
While there is no known asymptotic formula for $I_\HCIZ(\mu, \nu)$
in this case,
one can use the continuity of $I_\HCIZ(\mu, \nu)$ given in Theorem~\ref{thm:limit_hciz}
to write 
\begin{equation*}
    I_\HCIZ(\mu, \nu) = \lim_{\eta \to 0} I_\HCIZ(\mu_\eta, \nu),
\end{equation*}
where $\mu_\eta$ is any regularization, that is compactly supported (uniformly in $\eta$), and such that $\mu_\eta \to \mu$ as $\eta \downarrow 0$ in the sense of weak convergence.
Applying our numerical scheme to the regularized measure allows to access the value of $I_\HCIZ(\mu, \nu)$ even for strongly singular measures: we illustrate this 
in Fig.~\ref{fig:mp_boxplus_eps_to_sc}, where we consider $\mu= \mu_{\MP, \kappa = 0.5}$, which has an atom of mass $1/2$ in $x = 0$.
\begin{figure}[!htbp]
    \centering
    \begin{subfigure}[t]{0.5\textwidth}
        \centering
        \includegraphics[width=1.0\textwidth]{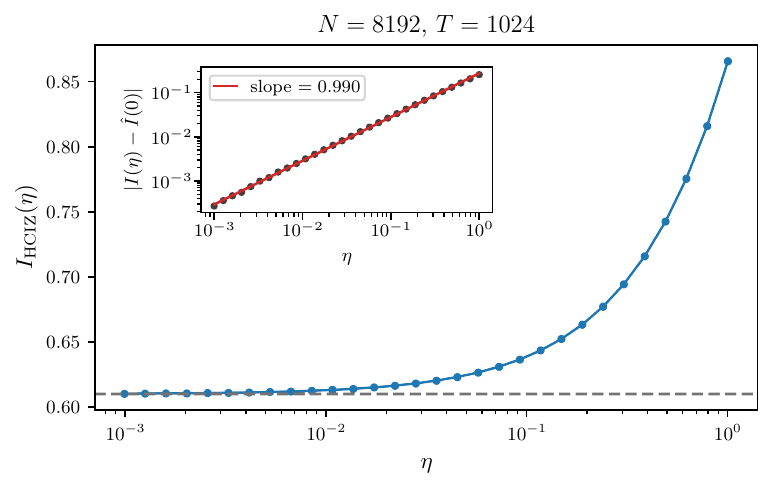}
        \caption{
        The value of $I_\HCIZ(\eta)$  as a function of $\eta$ for a large value of $N, T$. We estimate the value at $\eta = 0$ from 
        a power-law fit $I_\HCIZ(\eta) \sim \hat{I}(0) + A \eta^\alpha$, and the inset suggests a linear decay ($\alpha = 1$).
        }
    \end{subfigure}%
    ~ 
    \begin{subfigure}[t]{0.5\textwidth}
        \centering
        \includegraphics[width=1.0\textwidth]{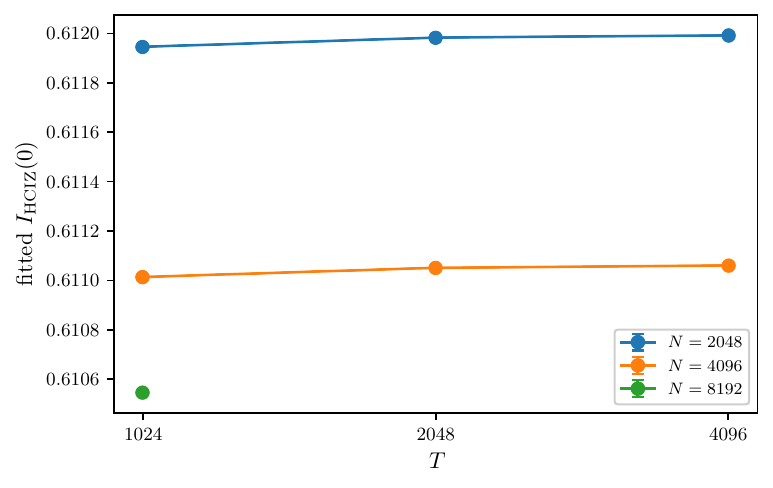}
        \caption{
    The value of $I_\HCIZ(0)$, as fitted from $I_\HCIZ(\eta)$ for small $\eta$ (see the left figure), for various values of $N$ and $T$. Notice the small vertical range of order $10^{-3}$.
        }
    \end{subfigure}
    \caption{
            Let $\mu = \mu_{\MP,1/2}$ and $\nu = \sigma_{\sci}$. 
            We let $\mu_\eta \coloneqq \mu \boxplus \sigma_{\sci, \eta}$ for $\eta > 0$ and 
            compute $I_\HCIZ(\eta) \coloneqq I_\HCIZ(\mu_\eta, \nu)$. 
    \label{fig:mp_boxplus_eps_to_sc}
    }
\end{figure}

\subsection{Discussion and consequences}\label{subsec:discussion_consequences}

We discuss here some properties of our discretization scheme and consequences of our analysis. 
A more detailed discussion of the implementation detail, as well as an analysis of convergence properties and computation time, 
is postponed to Section~\ref{sec:numerics}.

\myskip 
\textbf{Mathematical remarks --}
We make here a few remarks stemming from these first examples.
\begin{itemize}[leftmargin=*]
    \item 
    Perhaps the most transparent derivation of Theorem~\ref{thm:limit_hciz} is through the large deviations of the Dyson Brownian motion~\cite{guionnet2002large,guionnet2004addendum,bun2014instanton}. 
    At finite $N$, the corresponding particle action contains the
    nonlocal inverse-square interaction
    \begin{equation}\label{eq:non_local}
        \Psi_{N,i}(\boldsymbol{\lambda})
        \coloneqq
        \frac{1}{N^2}
        \sum_{j\neq i}
        \frac{1}{(\lambda_i-\lambda_j)^2},
    \end{equation}
    where $\lambda_1<\cdots<\lambda_N$ are the eigenvalues evolving under the
    Dyson Brownian motion (see e.g.~\cite{anderson2010introduction}).
    Heuristically, assuming that $(1/N)\sum_{i=1}^N\delta_{\lambda_i}
    \to \rho(x)\,\mathrm{d}x$, 
    for an index $i$ such that $\rho(\lambda_i)>0$, the local quantile
    spacing satisfies, for $k$ small compared with $N$,
    \begin{equation}\label{eq:linearization_density}
        \lambda_{i+k}-\lambda_i
        \simeq
        \frac{k}{N\rho(\lambda_i)}.
    \end{equation}
    Substitution into eq.~\eqref{eq:non_local} gives the approximation
    \begin{equation}\label{eq:inverse_square_local_limit}
        \Psi_{N,i}(\boldsymbol{\lambda})
        \simeq
        \rho(\lambda_i)^2
        \sum_{k\in\mathbb{Z}\setminus\{0\}}\frac{1}{k^2}
        =
        \frac{\pi^2}{3}\rho(\lambda_i)^2.
    \end{equation}
    Indeed, particles at macroscopic distance contribute only
    $O(N^{-1})$ to eq.~\eqref{eq:non_local}, whereas the singular
    near-neighbor terms contribute at order one. Averaging over the
    particles then yields
    \begin{equation}
        \frac{1}{N}\sum_{i=1}^N
        \Psi_{N,i}(\blambda)
        \simeq
        \frac{\pi^2}{3}
        \int \rho(x)^3\,\rd x,
    \end{equation}
    which explains the term proportional to $\rho^3$ in eq.~\eqref{eq:def_J}.
    This (very) heuristic argument is at the heart of the physics derivation of Theorem~\ref{thm:limit_hciz}, see~\cite{matytsin1994large,bun2014instanton}.
    Our discretization in eq.~\eqref{eq:def_GNeps} does not retain or truncate the full interaction in eq.~\eqref{eq:non_local}. Instead,
    we directly discretize its continuum limit and use nearest-neighbor spacings to approximate the density while, preserving
    the coefficient $\pi^2/3$ in eq.~\eqref{eq:inverse_square_local_limit}. 
    This makes our numerical scheme completely local in space, 
    and simplifies its implementation and analysis.
    \item The assumption in Theorem~\ref{thm:main} that $\rho(0, \cdot), \rho(1, \cdot) \in L^3$ can probably be weakened, as seen in the example of Fig.~\ref{fig:benchmark_theta_mp_free_convolution}. In any case, singular boundary densities can be appropriately regularized, as seen in the example of Fig.~\ref{fig:mp_boxplus_eps_to_sc}. A more quantitative understanding of the tradeoffs involved in regularizing the density would be interesting to explore.
    \item 
    Fig.~\ref{fig:sym_mp_to_sc} illustrates a topological change in the
    support of the density: the two connected components of
    $\operatorname{supp}\rho(0,\cdot)$ merge during the evolution.
    The numerical results do not determine whether the velocity field
    $v(t,\cdot)$ remains continuous at the merging time. 
    More generally, investigating the regularity properties of the solutions, in particular near such critical times, is an interesting mathematical question, and a likely necessary ingredient for a sharp quantitative understanding of convergence rates.
\end{itemize}

\myskip 
\textbf{Generalization: free boundary --}
There are several natural extensions and generalizations of our approach.
Among them is the numerical evaluation of variational principles in which one of the boundary measures entering the HCIZ functional is itself optimized.
More precisely, given a confining potential $F$ and a fixed measure $\nu\in\mcP(\bbR)$, one may consider
\begin{equation}\label{eq:free_boundary}
    L(\theta,F,\nu)
    \coloneqq
    \sup_{\mu\in\mcP(\bbR)}
    \left\{
        \frac{1}{2}\Sigma(\mu)
        -\frac{1}{2}\int_{\bbR} F(x)\,\mu(\mathrm{d}x)
        +\frac{1}{2}I_{\HCIZ}(\theta,\mu,\nu)
    \right\},
\end{equation}
under suitable assumptions ensuring that the functional is well defined and that the supremum is finite.
Variational problems of this form arise in several contexts discussed in Section~\ref{subsubsec:related_literature}, including spectral large-deviation problems for random matrices, the computation of free energies in disordered systems, and problems in high-dimensional statistics and machine learning.
At the level of our particle discretization, the optimization over $\mu$ can be incorporated by treating the initial particle positions $x_i(0)$ as additional optimization variables, while keeping the terminal configuration associated with $\nu$ fixed.
The discrete objective must then be augmented by consistent discretizations of the boundary terms $\Sigma(\mu)$ and $\int F\,\mathrm{d}\mu$.
This leads to a free-endpoint version of the algorithm introduced in Section~\ref{sec:numerics}.
In an extended version of this work, we plan to study this extension systematically, including its numerical stability and convergence properties, and to analyze some of its first consequences for the applications mentioned above.

\myskip 
\textbf{Alternative approaches.---}
We conclude by discussing alternative approaches to the numerical
computation of the high-dimensional asymptotics of the HCIZ integral.
A natural possibility is to solve directly the Euler equation
in eq.~\eqref{eq:euler_2nd_equation}, coupled with the continuity equation
in eq.~\eqref{eq:continuity}.
Defining
\begin{equation*}
    f(t,x) \coloneqq v(t,x) + \mathrm{i}\pi\rho(t,x),
\end{equation*}
these two equations are equivalent to the complex inviscid Burgers' equation
\begin{equation}\label{eq:complex_burgers}
    \partial_t f + f\,\partial_x f = 0.
\end{equation}
The corresponding real system for $(\rho,v)$ has a rather complex structure:
it is elliptic wherever $\rho>0$ and hyperbolic on $\{\rho = 0\}$, see~\cite{menon2017complex}. 
Consequently, the initial-value problem (i.e.\ evaluating $(\rho(1, \cdot), v(1, \cdot))$ from $(\rho(0, \cdot), v(0, \cdot))$) is generally ill-posed in standard function spaces, although
some solutions may be constructed under analyticity assumptions.
For the same reason, a direct shooting method for the boundary-value
problem is expected to be numerically unstable.

\myskip
Another idea is to extend eq.~\eqref{eq:complex_burgers}, and look for a function $f(t, z)$ that is holomorphic on a suitable domain and satisfies
\begin{equation*}
    \partial_t f + f\,\partial_z f = 0.
\end{equation*}
The method of complex characteristics then yields an implicit representation
of the solution and recovers several of the explicit solutions mentioned above, as discussed in~\cite{matytsin1994large,bun2014instanton,menon2017complex}.
For general boundary densities, however, the function $f(t, x)$ cannot be analytically extended to the whole complex plane, 
which precludes the use of the method of characteristics, see~\cite{menon2017complex}.

\myskip
In~\cite{menon2017complex}, an interesting relation to integrable systems was unveiled.
The author showed that the most probable trajectory under the constrained Dyson Brownian Motion (the ``instanton'' in the language of~\cite{bun2014instanton}) 
can be exactly mapped to the attractive Calogero-Moser system.
Given initial positions and velocities $(x_i(0), v_i(0))$, this yields a formula of the type $x(t) = \mathrm{eig}(M(t))$, for a real non-symmetric matrix 
$M(t)$ that depends only on the initial conditions. However, in general $M(t)$ ceases to have real eigenvalues for a large enough $t \in [0,1]$, 
except for a fine-tuned choice of the initial velocities. 
This difficulty was noticed in~\cite{menon2017complex}, and prevented the author from leveraging this idea for a numerical solution to the boundary-value problem.

\myskip
Perhaps the most promising alternative to our particle discretization is to
solve directly the convex optimization problem in
eq.~\eqref{eq:def_J} using Eulerian techniques developed for dynamic optimal
transport.
If the term proportional to $\rho^3$ is
removed in eq.~\eqref{eq:def_J}, the resulting minimization problem is precisely the dynamic
Benamou--Brenier formulation of quadratic optimal transport~\cite{benamou2000computational}. 
Benamou and Brenier discretized this problem on a space--time grid and solved the resulting
convex problem using an augmented-Lagrangian method. These methods can be adapted to handle additional 
terms similar to our $\rho^3$, see e.g.~\cite{benamou2015augmented}.
We refer to~\cite[Chapter~7]{peyre2019computational} for more details on the fluid-dynamics formulation of optimal transport, 
and numerical schemes tailored to this formulation. 
Our particle formulation, on the other hand, does not require a fixed
spatial grid and exhibits strong numerical performance even for non-smooth densities and velocity fields 
(cf.\ the examples considered above), while this might cause difficulties in a grid-discretization approach. For instance, for each fixed $t \in (0,1)$, the density $\rho(t,\cdot)$ vanishes rather abruptly, typically with a square-root singularity, and this may require specific attention in a grid-discretization approach. 
An implementation of this alternative method for the asymptotics of the HCIZ integral, and a quantitative comparison to our algorithm, is an interesting future direction.

\subsection{Structure of the paper}

In Section~\ref{sec:numerics}, we present in detail the minimization algorithm used to compute the discretized version of the problem whose validity 
is established in Theorem~\ref{thm:main}.
In Section~\ref{sec:proof}, we present the proof of Theorem~\ref{thm:main}.
We detail some additional analytical considerations on the HCIZ integral and Matytsin's solution, as well as some technical aspects 
of the proof and derivation, 
in Appendix~\ref{sec_app:properties}.
Finally, some additional numerical experiments on the convergence properties, and the runtime of our algorithm, are presented in Appendix~\ref{sec_app:additional_numerics}.

\myskip
\textbf{Acknowledgements and use of AI --}
We thank Alice Guionnet, Govind Menon, Gabriel Peyré and Adrien Taylor for interesting discussions at various stages of this work.
We used modern AI tools, primarily GPT-5.6 Sol as well as earlier versions of GPT and Claude Opus~5, to help designing and improving the numerical code accompanying this work.
The manuscript itself was written entirely by the authors, and AI tools were used only to proofread the manuscript.
The authors take full responsibility for the content and for any remaining errors.

\section{Efficient minimization algorithm}\label{sec:numerics}
We detail here the numerical procedure used to solve the finite-dimensional variational problem obtained from Theorem~\ref{thm:main}.

\myskip
\textbf{Time discretization --}
We start by defining a simple time discretization scheme.
For any $T \geq 1$ and $\Dt \coloneqq (\Delta t_k)_{k=1}^T$ such that $\Delta t_k \geq 0$ and $\sum_{k=1}^T \Delta t_k = 1$, we define
$\adm_{N, \Dt}(\mu_0, \mu_1)$
to be the set of all trajectories $\{(x_{i,k}, v_{i,k})\}$ for $1 \leq i \leq N$ and $0 \leq k \leq T$, satisfying the following.
\begin{enumerate}[label=\textbf{($\tbH$\arabic*)},ref=($\tH$\arabic*)]
\item
\label{bound_constraint_admNT}
$(x_{i,0}) = x_i(0)$ and $x_{i,T} = x_i(1)$ as given by eq.~\eqref{eq:def_xi_01}.
\item
\label{noncrossing_constraint_admnT}
For every $i \in \{1,\ldots, N\}$ and $k \in \{0, \cdots, T\}$, we have $x_{i,k} \le x_{i+1,k}$.
\item 
\label{velocity_constraint_admnT}
For every $i \in \{1,\ldots, N\}$ and $k \in \{0, \cdots, T\}$:
\begin{equation*} 
x_{i,k} = x_{i,0} + \sum_{l=1}^k \Delta t_l v_{i,l}.
\end{equation*}
\end{enumerate}
We also let for $k \in \{0, \cdots, T\}$:
\begin{equation}\label{eq:def_wk_Dt}
    w_k \coloneqq 
    \begin{dcases}
        \frac{\Dt_k}{2} &\hspace{1cm} \textrm{if} \, k \in \{0, T\}, \\
        \frac{\Dt_k + \Dt_{k+1}}{2} &\hspace{1cm} \textrm{if} \, 1 \leq k \leq T-1.
    \end{dcases}
\end{equation}
Then, for any $\{(x_{i,k}, v_{i,k})\} \in \adm_{N,\Dt}(\mu_0, \mu_1)$, we let
\begin{align}
     \label{eq:def_GNDt_eps}
    G_{N,\Dt,\eps}(\{x_{i,k}, v_{i,k}\}) 
    &\coloneqq
    \frac 1 {2N} \sum_{k=1}^T \sum_{i=1}^N \Dt_k v_{i,k}^2  \\
    \nonumber
    &+ \frac{\pi^2}{6N(N+1)^2} \sum_{k=0}^T \sum_{i=1}^N w_k \left( x_{i+1,k} - x_{i,k} + \frac \eps {N+1} \right) ^{-2}.
\end{align}
Notice that we use a trapezoidal discretization scheme for the second term in eq.~\eqref{eq:def_GNDt_eps}. 
As we detail in Appendix~\ref{subsec_app:time_discretization_scheme}, the objective is to have a global time discretization error in $G_{N, \Dt, \eps}(\{x_{i,k}, v_{i,k}\})$ of size $\mcO(\|\Dt\|_\infty^2) = \mcO(1/T^2)$, which is consistent with what we numerically 
observe (experiments on this point are given in Appendix~\ref{subsec_app:runtime_analysis}).
We set
\begin{equation}\label{eq:def_JNDt}
J_{N,\Dt,\eps}(\mu_0,\mu_1) \coloneqq \inf_{\adm_{N, \Dt}(\mu_0,\mu_1)} G_{N,\Dt,\eps}. 
\end{equation}
As mentioned above, it is straightforward to check (by discretizing a near-optimal continuous trajectory on one hand, and building an affine interpolation of a discrete trajectory on the other hand, similarly to the proof shown in Section~\ref{sec:proof}) that
\begin{equation*}
     \lim_{\substack{T \to \infty \\ \|\Dt\|_{\infty} \to 0}} J_{N,\Dt,\eps}(\mu_0, \mu_1) = J_{N, \eps}(\mu_0, \mu_1).
\end{equation*}
Equation~\eqref{eq:def_JNDt} defines a \emph{convex} minimization problem over a set of linear constraints (equalities and inequalities). 
We now detail the specifics of the minimization procedure and algorithm.

\subsection{Parametrization by the spacings}

We first change variables, to express the objective solely as a function of two variables: the (rescaled) particle spacings
\begin{align}\label{eq:rescaled_spacings}
    \delta_{i,k} \coloneqq (N+1)(x_{i+1,k} - x_{i, k}), \hspace{20pt} (1 \leq i \leq N - 1, \, 0 \leq k \leq T),
\end{align}
and the position of the first particle $(x_{1,k})_{k=0}^{T}$. 
This allows us to rewrite the objective in eq.~\eqref{eq:def_GNDt_eps} as (notice that we removed the term $i = N$ from the first sum, since $x_{N+1, k} = + \infty$ by convention):
\begin{align}
    \label{eq:G_NDt_2}
    &G_{N, \Dt, \eps}(\delta, x_1) \\ 
    \nonumber
    &= \frac{\pi^2}{6N} \sum_{k=0}^T \sum_{i=1}^{N-1} \frac{w_k}{(\delta_{i,k} + \eps)^2} +  \frac 1 {2N} \sum_{k=1}^T \frac{1}{\Delta t_k} \sum_{i = 1}^N
    \left[x_{1,k} - x_{1,k-1} + \frac{1}{N+1} \sum_{j=1}^{i-1}(\delta_{j,k} - \delta_{j,k-1})\right]^2.
\end{align}
One should minimize eq.~\eqref{eq:G_NDt_2} over $(x_{1, k})_{k=1}^{T-1}$ and $(\delta_{i,k})_{ \substack{1 \leq i \leq N-1 \\ 1 \leq k \leq T-1}}$, 
under the sole constraint that $\delta_{i,k} \geq 0$, and recalling that the boundary values are fixed as:
\begin{align}\label{eq:boundary_delta_x1}
    \begin{dcases}
        x_{1,0} &= x_1(0), \\
        x_{1,T} &= x_1(1), \\
        \delta_{i,0} &= (N+1)(x_{i+1}(0) - x_i(0)), \\
        \delta_{i,T} &= (N+1)(x_{i+1}(1) - x_i(1)),
    \end{dcases} 
\end{align}
where $(x_i(0), x_i(1))$ are given by eq.~\eqref{eq:def_xi_01}.
Parameterizing by the spacings has thus simplified the linear constraints to $\delta_{i,k} \geq 0$, which imposes the non-crossing of the particle trajectories.
Further, the minimization over $(x_{1, k})_{k=1}^{T-1}$ is a simple quadratic optimization problem which we can solve analytically. 
This yields (using the same notation $G_{N, \Dt, \eps}$ slightly abusively):
\begin{align}\label{eq:G_NDt_3}
    &G_{N, \Dt, \eps}(\delta) \\ 
    \nonumber
    &= \frac{1}{2} \left[\ox(0) - \ox(1) \right]^2 + \frac{\pi^2}{6N} \sum_{k=0}^T \sum_{i=1}^{N-1} \frac{w_k}{(\delta_{i,k} + \eps)^2} + \frac{1}{2 (N+1)^2} \sum_{k=1}^T \Delta t_k \, (\rd \delta_k^\T) L_{N-1}^{-1} (\rd \delta_k), 
\end{align}
where we defined the following quantities: 
\begin{align}\label{eq:defs_G_NDt_3}
    \begin{dcases}
    \ox(t) &\coloneqq \frac{1}{N} \sum_{i=1}^N x_i(t), \hspace{10pt} (t \in \{0, 1\}) \\
    (\rd \delta)_{i,k} &\coloneqq \frac{\delta_{i,k} - \delta_{i,k-1}}{\Delta t_k}, \hspace{10pt} (1 \leq i \leq N-1 \textrm{ and } 1 \leq k \leq T)\\ 
    L_{N-1} &\coloneqq N
    \begin{pmatrix}
        2 & -1 &0 &\cdots &0 \\
        -1 & 2 &-1 &\cdots &0 \\
        0 & -1 &2 &\cdots &0 \\
        \vdots & \vdots &\vdots &\ddots &\vdots \\
        0 & 0 &0 &\cdots &2
    \end{pmatrix}
    \in \bbR^{(N-1) \times (N-1)}.
    \end{dcases}
\end{align}
We detail the derivation of eq.~\eqref{eq:G_NDt_3} for completeness in Appendix~\ref{subsec_app:eq_G_NDt_3}.
The first term in eq.~\eqref{eq:G_NDt_3} is a deterministic function of the boundary densities, and is not part of the optimization objective.

\myskip 
\textbf{On the constraint $\delta_{i,k} \geq 0$ --}
Notice that $G_{N, \Dt, \eps}(\delta)$ is strongly convex on its domain $\{\delta_{i,k} \geq 0\}$ and therefore has a unique minimum.
Rather than imposing the constraint $\delta_{i,k} \geq 0$, we extend the domain of definition to all $\delta_{i,k} \in \bbR$ and replace $(\delta_{i,k}+\eps)^{-2}$ by $\sigma_\eps(\delta_{i,k})$, with 
\begin{align}\label{eq:def_sigmaeta}
    \sigma_\eps(x) &\coloneqq \begin{dcases}
        (x+\eps)^{-2} &\textrm{ if } x \geq 0, \\
        6 \eps^{-2} -  8 \eps^{-3} (x+\eps) + 3 \eps^{-4} (x+\eps)^2 &\textrm{ if } x < 0.
    \end{dcases}
\end{align}
It is trivial to check that $\sigma_\eps$ is of class $\mcC^2$ and convex over $\bbR$. 
In practice, we choose $\eps > 0$ very small ($10^{-7}$ in the simulations presented), and we check that, at the output of the algorithm, we have $\delta_{i,k} \geq 0$, which ensures that the minimum we reach is indeed the global minimum of $G_{N, \Dt, \eps}$.
This will allow us to use methods designed for \emph{unconstrained} convex minimization problems.
To sum up, we extend the definition of $G_{N,\Dt,\eps}$ to arguments that are not necessarily in the nonnegative quadrant, and minimize
\begin{align}\label{eq:G_NDt_4}
    &G_{N, \Dt, \eps}(\delta) \\ 
    \nonumber
    &= \frac{1}{2} \left[\ox(0) - \ox(1) \right]^2 + \frac{\pi^2}{6N} \sum_{k=0}^T \sum_{i=1}^{N-1} w_k \sigma_\eps(\delta_{i,k}) + \frac{1}{2 (N+1)^2} \sum_{k=1}^T \Delta t_k \, (\rd \delta_k^\T) L_{N-1}^{-1} (\rd \delta_k), 
\end{align}
over all $\delta_{i,k} \in \bbR$.

\subsection{Time discretization}\label{subsec:time_schedule}

We now detail the choice of $\Dt$ used in the algorithm.
To motivate it, we use a free probability interpretation of
the solution $(\rho,v)$ in Theorem~\ref{thm:limit_hciz};
we refer to~\cite{anderson2010introduction} for an introduction to free probability in the context of random matrices. As explained in~\cite[p.~535]{guionnet2004first}, under the setting of Theorem~\ref{thm:limit_hciz}, there exist self-adjoint operators $X_0$, $X_1$, $S$ in a common $W^*$-probability space such that $X_0$ has law $\mu$, $X_1$ has law $\nu$, $S$ is a semi-circular variable that is free from $(X_0, X_1)$, and for every $t \in [0,1]$, the measure
$\rho(t, x) \, \rd x$ is the law of the free Brownian bridge
\begin{align}\label{eq:dbb}
    X_t \coloneqq (1-t) X_0 + t X_1 + \sqrt{t (1-t)} S.
\end{align}
In particular, for small times $t \ll 1$, $X_t$ is, to first order, close to the additive free convolution $X_0 + \sqrt{t} S$ (and similarly for times $t \to 1$). 
For a given value $T \geq 1$ (assumed even for simplicity), 
this motivates a non-linear time step schedule $(\Delta t_k)_{k=1}^T$, defined by:
\begin{equation}\label{eq:def_tk_sqrt_schedule}
    t_k \coloneqq
    \begin{dcases}
        \frac{2k^2}{T^2} &, \hspace{1cm} (k \in \{0, \cdots, T/2\}),\\
        \frac{4k}{T} - 1 -\frac{2k^2}{T^2} &, \hspace{1cm} (k \in \{T/2+1, \cdots, T\}).
    \end{dcases}
\end{equation}
\begin{equation}\label{eq:def_Dtk_sqrt_schedule}
    \Delta t_k = t_k - t_{k-1} =
    \begin{dcases}
        \frac{4}{T^2} \left(k - \frac{1}{2}\right) &, \hspace{1cm} (k \in \{1, \cdots, T/2\}),\\
        \Delta t_{T-k+1} &, \hspace{1cm} (k \in \{T/2+1, \cdots, T\}).
    \end{dcases}
\end{equation}

\subsection{Preconditioning}\label{subsec:preconditioning}

\subsubsection{An ill-conditioned minimization problem}

Notice that the Hessian of $G_{N,\Dt,\eps}$ is naturally ill-conditioned. 
We obtain from eq.~\eqref{eq:G_NDt_4} (we also state the gradient for completeness):
\begin{subnumcases}{\label{eq:grad_Hess_G_NDt}}
    \nabla_{i,k} \, G_{N,\Dt, \eps} = \frac{\pi^2}{6 N} w_k \sigma_\eps'(\delta_{i,k}) + \frac{1}{(N+1)^2} \left(\left[L_{N-1}^{-1} \otimes L_{T-1}(\Dt)\right] \delta\right)_{i,k}, & \label{eq:grad_G_NDt}\\
    \nabla^2 \, G_{N,\Dt, \eps} = \frac{\pi^2}{6 N} \Diag\left[w_k \sigma_\eps''(\delta_{i,k})\right] + \frac{1}{(N+1)^2} L_{N-1}^{-1} \otimes L_{T-1}(\Dt), & \label{eq:Hess_G_NDt}
\end{subnumcases}
where we define a Laplacian matrix for $p \geq 1$ and 
any $a = (a_1, \cdots, a_{p+1}) \in \bbR^{p+1}$ such that $a_i \geq 0$ and $\sum_{i=1}^{p+1} a_i = 1$:
\begin{align}
    \label{eq:def_Lp_a}
    L_p(a) &\coloneqq 
    \begin{pmatrix}
        a_1^{-1} + a_2^{-1} & -a_2^{-1} &0 &\cdots &0 \\
        -a_2^{-1} & a_2^{-1} + a_3^{-1} &-a_3^{-1} &\cdots &0 \\
        0 & -a_3^{-1} & a_3^{-1} + a_4^{-1}&\cdots &0 \\
        \vdots & \vdots &\vdots &\ddots &\vdots \\
        0 & 0 &0 &\cdots &a_p^{-1} + a_{p+1}^{-1} 
    \end{pmatrix} \in \bbR^{p \times p},
\end{align}
This is consistent with eq.~\eqref{eq:defs_G_NDt_3}, as we also set by convention 
\begin{equation*}
    L_p \coloneqq L_p\left(\left(\frac{1}{p+1}\right)_{i=1}^{p+1}\right).
\end{equation*}
From eq.~\eqref{eq:Hess_G_NDt}, we can see from an informal argument that the Hessian is generically ill-conditioned. 
It is well known that the eigenvalues of $L_{p-1}$ satisfy
\begin{align}\label{eq:evalues_Lp}
    \lambda_k(L_{p-1}) &= 4 p \sin^2 \left(\frac{k\pi}{2p}\right), \hspace{1cm} (1 \leq k \leq p-1)
\end{align}
Assuming $\Dt_k = 1/T$ for simplicity, this informal argument decomposes eq.~\eqref{eq:Hess_G_NDt} into two terms:
\begin{itemize}
    \item A diagonal and positive matrix arising from the $\rho^3$ term in eq.~\eqref{eq:def_G}, with eigenvalues of order $\Theta(N^{-1} T^{-1})$ assuming the spacings are of order $\delta_{ik} = \Theta(1)$.
    \item A second term arising from the $\rho v^2$ ``optimal transport'' objective: it is positive definite, with smallest eigenvalue of order $\Theta(N^{-3} T^{-1})$, and largest eigenvalue scaling as $\Theta(N^{-1} T)$.
\end{itemize}
From this heuristic argument, we expect the condition number of the Hessian to be $\Theta(T^2)$.  

\myskip
\textbf{Alternative parametrization --}
Notice that if we were to consider the simple optimal transport problem, we would drop the first term in eq.~\eqref{eq:Hess_G_NDt}, which would yield a very large Hessian condition number $\Theta(N^2 T^2)$. 
This apparent ill-conditioning would however only be due to a bad choice of parametrization: the optimal transport objective is proportional to $\int \rho v^2$, and its discretization is therefore quadratic in the velocities $v_{i,k}$: if we were to 
use these velocities as parameters, the ill-conditioning of the Hessian would disappear\footnote{Notice that some bad conditioning could still arise during optimization from the linear constraints arising from the non-crossing conditions, but we do not discuss this here.}.
However, this does not extend to our setting: using the velocities as parameters, the Hessian of the objective becomes 
\begin{equation}\label{eq:Hess_v}
    \nabla_v^2 G_{N,\Dt,\eps}
    =
    \frac{\pi^2}{6N}
    A_{N,T}^{\T}
    \Diag\left[
        \left(
            w_k\sigma_\eps''(\delta_{i,k})
        \right)_{\substack{1\leq i\leq N-1\\1\leq k\leq T-1}}
    \right]
    A_{N,T}
    +
    \frac{1}{N}
    \Diag\left[
        \left(\Delta t_k\right)_{k=1}^T
    \right]
    \otimes\Id_N,
\end{equation}
where 
$A_{N, T}\coloneqq (N+1)(\mcI_T\otimes D_N)$,
with $\mcI_T$ the discrete integration operator in time (restricted to the linear subspace of velocities which are compatible with the boundary conditions), and $D_N$ the first-difference operator in the particle index.
Informally, $A_{N, T}$ maps velocities to the corresponding spacings.
Considering constant time steps for simplicity, 
it is easy to see that the smallest and largest singular values of $A_{N,T}$ scale as 
\begin{equation}\label{eq:singular_values_A_NT}
    \sigma_{\min}(A_{N,T})
    =
    \Theta(T^{-1}),
    \qquad
    \sigma_{\max}(A_{N,T})
    =
    \Theta(N).
\end{equation}
Assuming that the spacings remain of order $\delta_{i,k} = \Theta(1)$ and bounded away from zero, eq.~\eqref{eq:Hess_v} thus yields a total condition number 
$\kappa(\nabla^2_v G_{N, \Dt, \eps}) = \Theta(N^2)$.
In the regimes considered numerically, where $N\gtrsim T$, the resulting condition number is no smaller than the $\Theta(T^2)$ condition number obtained in spacing variables.
Since parametrizing with the velocities also requires a careful handling of the non-crossing constraints, this is further motivation for considering a parametrization of the objective by the spacings.

\subsubsection{Preconditioning procedure}

In order to overcome this ill-conditioning issue, we use a preconditioning procedure that approximates the inverse Hessian of eq.~\eqref{eq:Hess_G_NDt}. 
We build this preconditioner by replacing the first, diagonal, term of eq.~\eqref{eq:Hess_G_NDt} by a block-diagonal average of its values, i.e.\ we define:
\begin{align}\label{eq:def_H}
    \begin{dcases}
        H(\delta) &\coloneqq 
        \Id_{N-1} \otimes \Diag[(\Gamma_k(\delta_k))_{k=1}^{T-1}] + \frac{1}{(N+1)^2} L_{N-1}^{-1} \otimes L_{T-1}(\Dt), \\ 
        \Gamma_k(\delta_k) &\coloneqq \frac{\pi^2 \omega_k}{6 N(N-1)} \sum_{i=1}^{N-1} 
        \sigma_\eps''(\delta_{i,k}).
    \end{dcases}
\end{align}
Notice that we expect $\Gamma_k(\delta_k) = \Theta(N^{-1} T^{-1})$.
In the preconditioning procedure, we need to compute $H(\delta)^{-1} z$ in a computationally efficient manner, for $z \in \bbR^{N-1} \otimes \bbR^{T-1}$.
Let us denote the eigenbasis decomposition
\begin{equation*}
        L_{N-1} \coloneqq 4 \sum_{i=1}^{N-1} \sin^2 \left(\frac{i\pi}{2N}\right) v_i^{(N)} \left[v_i^{(N)}\right]^\T.
\end{equation*}
Importantly, we have
\begin{align*}
    (v_i^{(N)})_j \coloneqq \sqrt{\frac{2}{N}} \sin\left(\frac{i j \pi}{N}\right), \hspace{1.5cm} (1 \leq i,j \leq N-1).
\end{align*}
For any $u \in \bbR^N$, this allows us to compute the vector 
\begin{align*}
    \DST(u) \coloneqq (v_i^{(N)} \cdot u)_{i=1}^{N-1} 
    = \left(\frac{2}{\sqrt{N}} \sum_{j=1}^{N-1}  \sin\left(\frac{i j \pi}{N}\right) u_j\right)_{i=1}^{N-1}
\end{align*}
in $\mcO(N \log N)$ operations using classical fast Fourier transform algorithms.

\myskip
Moreover, notice that the matrix of eq.~\eqref{eq:def_H} restricted to a given sine spatial mode is tri-diagonal and positive definite. 
Therefore, when computing $w \coloneqq H(\delta)^{-1} z$ for $z = (z_1, \cdots, z_{N-1}) \in \bbR^{N-1} \otimes \bbR^{T-1}$, 
we need to solve $(N-1)$ independent tri-diagonal inverse problems, and each of these can be solved in $\mcO(T)$ operations using a cyclic reduction algorithm~\cite{gander1998cyclic}.
Combined with the discrete sine transform algorithm for the space modes, this allows us to solve the whole linear system in $\mcO(NT \log N)$ operations.

\subsection{Global scaling and the $\theta$ parameter}\label{subsec:scaling_theta}

While the global scaling parameter $\theta > 0$ in $I_\HCIZ(\theta, \mu, \nu)$ can be formally incorporated in a rescaling of $(\mu, \nu)$, it can be useful to keep it explicitly, in particular to avoid numerical issues in the limits $\theta \ll 1$ and $\theta \gg 1$.
As detailed in Appendix~\ref{subsec_app:rescaling_invariance}, 
one can take this rescaling into account by modifying the variational principle of eqs.~\eqref{eq:def_J_2} and~\eqref{eq:def_G} as:
\begin{equation}
    \label{eq:def_JG_rescaled}
    \begin{dcases}
    J(\theta, \mu,\nu) &\coloneqq \inf_{(\rho, v) \in \adm_\theta(\mu,\nu)} G(\theta, \rho, v), \\
    G(\theta, \rho,v) &\coloneqq \frac 1 2 \int_0^{\theta^{-1}} \rd u \int_\bbR \rd x \, \left(v(u,x)^2 + \frac {\pi^2}{3} \rho(u,x)^2\right) \rho(u,x).
    \end{dcases}
\end{equation}
The definition of admissible trajectories in Definition~\ref{def:adm} remains strictly identical, except that the time interval is now $u \in (0, \theta^{-1})$ (changing the name of the variable).
The minimization algorithm can also be adapted to handle this rescaling: we detail how its different ingredients are affected in Appendix~\ref{subsec_app:rescaling_algorithm}.

\subsection{Initialization, and final algorithm}

\noindent
\textbf{Initialization --}
As mentioned above, we consider a minimization problem which generalizes the simple one-dimensional optimal transport. 
This motivates initializing the algorithm at the solution to this problem, which is simply given by a linear interpolation of the quantiles of $\mu$ and $\nu$, i.e.\ we let
\begin{align}\label{eq:ot_delta}
    \delta_{i,k}^{(0)} &\coloneqq (N+1) \left[\left(1 - t_k\right) (x_{i+1}(0) - x_i(0)) + t_k (x_{i+1}(1) - x_i(1)) \right],
\end{align}
and we recall that $(x_i(0))_{i=1}^N$ and $(x_i(1))_{i=1}^N$ are the quantiles of $\mu_0 = \mu$ and $\mu_1 = \nu$, given in eq.~\eqref{eq:def_xi_01}.

\myskip
\textbf{Preconditioned conjugate gradient --}
In Appendix~\ref{subsec_app:pcg}, we recall the classical preconditioned conjugate gradient (PCG) algorithm.
Given a positive-definite symmetric matrix $A$, a vector $b$, and a positive-definite symmetric matrix $H$ (the \emph{preconditioner}), it returns $\PCG(H ; A, b)$ which is the unique solution to the system $Ax = b$.
In our simulations, we used a relative precision target $\eps_{\rm PCG} = 10^{-3}$ for the PCG algorithm.

\myskip
\textbf{Final algorithm --}
We are now ready to state our minimization algorithm. We use a simple Newton method, using the PCG algorithm to efficiently find a solution to the Newton system. 
We state it in Algorithm~\ref{algo:newton_cg}.
Notice furthermore that the simple form of the Hessian in eq.~\eqref{eq:Hess_G_NDt} allows us to perform the associated matrix-vector multiplications in time $\mcO(NT)$ rather than the naive $\mcO(N^2 T^2)$.
All in all, the complexity of each Newton iteration is dictated to first order by multiplying the number of iterations of the preconditioned conjugate gradient algorithm, and the complexity of inverting a linear system associated to the preconditioner $H(\delta)$, which is $\mcO(N T \log N)$ as detailed in Section~\ref{subsec:preconditioning}.

\begin{algorithm}[ht]
\SetAlgoLined
\KwResult{The value
$J_{N,\Dt,\eps}(\mu_0,\mu_1)
= \inf_{\adm_{N,\Dt}(\mu_0,\mu_1)} G_{N,\Dt,\eps}$,
and the corresponding spacings $\delta_{i,k}$.}

\textbf{Input: } The quantiles $\{x_i(0),x_i(1)\}_{i=1}^N$ of
eq.~\eqref{eq:def_xi_01}, the time spacings $\{\Dt_k\}_{k=1}^T$ of
eq.~\eqref{eq:def_Dtk_sqrt_schedule}, a small $\eps>0$\;

\emph{Initialize} $t=0$ and
$\delta_{i,k}=\delta_{i,k}^{(0)}$ according to
eq.~\eqref{eq:ot_delta}\;

\While{not converged}{
Compute the Newton direction $p^{(t)} =
-\PCG\left(
H(\delta^{(t)});
\nabla^2 G_{N,\Dt,\eps}(\delta^{(t)}),
\nabla G_{N,\Dt,\eps}(\delta^{(t)})
\right)
$\;
{\color{gray} \it 
\begin{itemize}[leftmargin=30pt,itemsep=1pt,topsep=1pt]
    \item $\nabla G_{N,\Dt,\eps}$ and $\nabla^2 G_{N,\Dt,\eps}$ are given in
eq.~\eqref{eq:grad_Hess_G_NDt}.
\item $H(\delta^{(t)})$ is given in eq.~\eqref{eq:def_H}, along with a fast
FFT-based procedure for solving the associated linear systems.
\end{itemize}
}
Find the step size $\alpha > 0$ with backtracking line search~\cite{vandenberghe2004convex}\;
Set $\delta^{(t+1)}=\delta^{(t)}+\alpha p^{(t)}$\;
$t\leftarrow t+1$\;
}

\caption{Newton method with preconditioned conjugate gradient.\label{algo:newton_cg}}
\end{algorithm}

\myskip 
\textbf{Stopping criterion --}
We use as a stopping criterion the value of the Newton decrement $\lambda_{\rm New.}^2 \coloneqq [(\nabla G_{N, \Dt, \eps})^\T (\nabla^2 G_{N, \Dt, \eps})^{-1} \nabla G_{N, \Dt, \eps}]^{-1/2}$, 
which is exactly the decrease in the quadratic approximation obtained by a full Newton step~\cite{vandenberghe2004convex}.
We stop whenever $\lambda_{\rm New.}^2 < \eps_{\rm tol.}$, for a given $\eps_{\rm tol.} > 0$ (we take $\eps_{\rm tol.} = 10^{-6}$ in the results presented in this paper). 

\subsection{Practical implementation and final remarks}

\myskip 
\textbf{Implementation --}
An implementation of Algorithm~\ref{algo:newton_cg} is provided in a public \href{https://github.com/AnMaillard/extensive_rank_HCIZ_solver_public}{Github repository}~\cite{github_repo}: it uses the Pytorch library~\cite{paszke2019pytorch}, and all simulations shown were made on a single Nvidia GPU (either V100, RTX6000 or RTX8000), 
with running times for solving each instance ranging from a few seconds to a few hours for the largest sizes and hardest problems shown.
We used $\eps = 10^{-7}$ generically in all the instances shown, and we checked that the resulting spacings $\delta_{i,k}$ are positive at convergence, so that none of the non-crossing constraints $\delta_{i,k} > 0$ is saturated.

    \myskip
\textbf{Convergence properties and computation time --}
In Appendix~\ref{sec_app:additional_numerics}, we investigate the numerical convergence and total runtime of the algorithm for several pairs of boundary measures and a range of discretization sizes $N$ and $T$.
For fixed $N$, the objective values generally exhibit convergence at a rate $\mcO(T^{-2})$, 
consistent with the trapezoidal discretization used in eq.~\eqref{eq:def_GNDt_eps} for the $\rho^3$ term.
For fixed $T$, we observe an error of order $\mcO(N^{-1})$, which can be further improved by Richardson extrapolation~\cite{richardson1911approximate}.
We note that this $\mcO(1/N)$ accuracy is also commensurate with the discretization of the boundary measures we use: 
in practical applications, the evaluation of the HCIZ integral is often done alongside that of various linear statistics 
of these distributions, whose evaluation from these quantiles generically induces another error $\mcO(1/N)$. 

\myskip 
\textbf{Alternative approaches to the minimization procedure --}
There are many local optimization algorithms for large-scale convex optimization problems. 
We detail here our implementation of the Newton--conjugate gradient algorithm, as we empirically found this approach to outperform first-order methods (even preconditioned), such as Adam~\cite{kingma2014adam}, and approximate second-order methods, such as LBFGS, that are based on a low-rank approximation of the Hessian~\cite{byrd1995limited}. Alternatively to replacing the function $(x+\eps)^{-2}$ for $x \geq 0$ by the convex continuation $\sigma_\eps(x)$ defined for all $x \in \bbR$ and checking \emph{a posteriori} that the minimizer found satisfies the positivity constraint, 
this constraint can be implemented e.g.\ by a penalty function or by methods such as the augmented Lagrangian; we refer to~\cite{nocedal2006numerical} for more details.
We leave a study of a quantitative comparison to alternative procedures (including also different formulations of the problem, as discussed in Section~\ref{subsec:discussion_consequences}) to future work.

\section{Proof of Theorem~\ref{thm:main}}\label{sec:proof}
This section is devoted to the proof of Theorem~\ref{thm:main}.
As a continuous limit of $G_{N, \eps}$ in eq.~\eqref{eq:def_GNeps},
it will be convenient to introduce, for each $\eps > 0$, the quantity
\begin{equation}
\label{e.def.Geps}
G_\eps(\rho,v) \coloneqq \frac 1 2 \int_0^1 \int_\bbR \left(v(t,x)^2 + \frac {\pi^2}{3} \left(\frac{\rho(t,x)}{1+\eps \rho(t,x)}\right)^2\right) \rho(t,x) \, \rd x \, \rd t,
\end{equation}
as well as
\begin{equation}
\label{e.def.Jeps}
J_{\eps}(\mu_0,\mu_1) \coloneqq \inf_{(\rho, v) \in \adm(\mu_0,\mu_1)} G_\eps(\rho, v).
\end{equation}
We then have $G(\rho, v) = G_{\eps = 0}(\rho, v)$ and $J(\mu_0, \mu_1) = J_{\eps = 0}(\mu_0, \mu_1)$.
We first recall the following theorem from~\cite{guionnet2004first}.
\begin{theorem}[Properties of the minimizer of $G$~\cite{guionnet2004first}]
\label{t.exist.min}
For every $\mu_0, \mu_1 \in \mcP_c(\bbR)$ with $\Sigma(\mu_0)$ and $\Sigma(\mu_1)$ finite, the functional $G$ admits a unique minimizer $(\rho^*, v^*)$ over the set $\adm(\mu_0,\mu_1)$. 
Moreover, there exists a compact set $K \subset \bbR$ such that for all $t \in (0, 1)$, the function $\rho^*(t, \cdot)$ is supported on $K$. 
\end{theorem}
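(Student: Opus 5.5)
The plan is to work in the ``momentum'' variables $(\rho, m)$ with $m \coloneqq \rho v$. Then
\[
G(\rho,v) = \frac12 \int_0^1 \int_\bbR \Big(\frac{m^2}{\rho} + \frac{\pi^2}{3}\rho^3\Big) \, \rd x \, \rd t ,
\]
which is jointly convex and lower semicontinuous in $(\rho,m)$. The constraints of Definition~\ref{def:adm} (continuity equation, boundary values) are linear in $(\rho,m)$. Existence then follows from the direct method, uniqueness from strict convexity, and the support statement needs a separate comparison argument.

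\textbf{Finiteness and compactness.} First, $J(\mu_0,\mu_1)<\infty$. I would get this from Theorem~\ref{thm:limit_hciz}: since $\Sigma(\mu_0),\Sigma(\mu_1)$ are finite and $I_\HCIZ(\mu_0,\mu_1)$ is a finite limit of bounded quantities, eq.~\eqref{eq:I_HCIZ_matytsin} gives $J<\infty$.

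Take a minimizing sequence $(\rho_n,m_n)$. Along it, $\int\!\!\int \rho_n^3$ and $\int\!\!\int m_n^2/\rho_n$ are bounded. By Cauchy--Schwarz and Hölder,
\[
|m_n|^{3/2} \le \Big(\frac{m_n^2}{\rho_n}\Big)^{3/4}\rho_n^{3/4},
\]
so $m_n$ is bounded in $L^{3/2}((0,1)\times\bbR)$, and $\rho_n$ is bounded in $L^3$. The kinetic bound gives a uniform $\tfrac12$-Hölder estimate in $W_2$:
\[
W_2(\rho_n(s),\rho_n(t))^2 \le |t-s|\int_s^t\!\!\int \frac{m_n^2}{\rho_n}.
\]
Together with the compact supports of $\mu_0$, this yields uniform second moments, hence tightness. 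Extracting weakly convergent subsequences ($\rho_n \rightharpoonup \rho$ in $L^3$, $m_n\rightharpoonup m$ in $L^{3/2}$, and $\rho_n(t)\to\rho(t)$ narrowly for every $t$ by Arzelà--Ascoli), the continuity equation passes to the weak limit and the boundary conditions are preserved. Lower semicontinuity of the convex integrand then shows the limit is a minimizer, with $v \coloneqq m/\rho$ on $\{\rho>0\}$.

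\textbf{Uniqueness.} Suppose $(\rho^1,m^1)$ and $(\rho^2,m^2)$ are two minimizers. Their average is admissible, and $\rho\mapsto\rho^3$ is strictly convex, so $\rho^1=\rho^2 \eqqcolon \rho$. For fixed $\rho$, the map $m\mapsto m^2/\rho$ is strictly convex on $\{\rho>0\}$, and finite energy forces $m=0$ on $\{\rho=0\}$. Hence $m^1=m^2$, and $v$ is determined $\rho$-a.e.

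\textbf{Compact support (main obstacle).} I expect this step to be the hardest. I would pass to Lagrangian (quantile) coordinates: let $X(t,\cdot)$ be the quantile function of $\rho(t,\cdot)$. In one dimension the functional becomes
\[
\frac12\int_0^1\!\!\int_0^1 \Big((\partial_t X)^2 + \frac{\pi^2}{3}(\partial_u X)^{-2}\Big)\,\rd u\,\rd t ,
\]
which is convex in $X$ over nondecreasing functions; this is also the continuum version of $G_{N,\eps}$.

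The goal is then to show $\sup_{t,u}|X(t,u)|<\infty$. My first attempt would be a truncation competitor $\tilde X = \min(X, M)$ with $M(u) \coloneqq \max\{X(0,u),X(1,u)\} + C$, symmetrized at the bottom end. Truncating by a time-independent barrier does not increase the kinetic term. The difficulty is the pressure term on the contact set $\{X \ge M\}$, where $\partial_u\tilde X=\partial_u M$ may be smaller than $\partial_u X$. The $L^3$ boundary regularity of $\rho_0,\rho_1$ alone does not control this (Theorem~\ref{t.exist.min} only assumes finite $\Sigma$).

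If the barrier argument fails, I would instead use the Euler--Lagrange equation $\partial_t^2 X = -\tfrac{\pi^2}{3}\partial_u\big((\partial_u X)^{-3}\big)$. The goal would be a maximum principle for the attractive (negative) pressure, showing that the extreme particle $X(t,1)$ cannot exceed a bound depending only on the boundary supports plus an $O(1)$ constant. Alternatively, I would invoke the free Brownian bridge representation of eq.~\eqref{eq:dbb}, which gives support in $[-\|X_0\|-\|X_1\|-2,\ \|X_0\|+\|X_1\|+2]$ directly, and rely on uniqueness to identify it with the minimizer.
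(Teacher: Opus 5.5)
Your existence and uniqueness argument is correct, but the compact-support claim is not proved: none of your three routes establishes it, and that is a genuine gap. For comparison, the paper gives no proof of Theorem~\ref{t.exist.min}; it imports the whole statement from \cite{guionnet2004first}.

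\textbf{What works.} In the variables $(\rho,m)$ the admissible set is convex, and the integrand $m^2/\rho+\frac{\pi^2}{3}\rho^3$ is jointly convex and lower semicontinuous. Your $L^3\times L^{3/2}$ bounds and the $W_2$ H\"older estimate give compactness. Strict convexity of $\rho\mapsto\rho^3$, and then of $m\mapsto m^2/\rho$ on $\{\rho>0\}$, gives uniqueness in the sense of the Remark. This is the same direct-method argument the paper uses in the proof of Proposition~\ref{p.gamma.Gep.G}.

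One caveat concerns $J<\infty$. Taking it from Theorem~\ref{thm:limit_hciz} makes the exercise moot, because item~(2) of that theorem already contains existence, uniqueness and boundedness of $\Omega$. A self-contained route runs free Brownian motions $\mu_i\boxplus\sigma_{\sci,\sqrt{s}}$ out of each endpoint and joins them by a displacement interpolation between bounded, compactly supported densities. The cost of each free Brownian motion is controlled by $\Sigma(\mu_i\boxplus\sigma_{\sci,\sqrt{s}})-\Sigma(\mu_i)$; this is exactly where finiteness of $\Sigma(\mu_i)$ enters.

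\textbf{The first two support routes.} The truncation competitor fails for the reason you give, and the proposal does not repair it.

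Your Euler--Lagrange equation has the wrong sign. The first variation of the quantile functional gives $\partial_t^2X=+\frac{\pi^2}{3}\partial_u\big((\partial_uX)^{-3}\big)$, that is $\partial_t^2X+\pi^2(\partial_uX)^{-4}\partial_u^2X=0$. This equation is elliptic, consistent with eq.~\eqref{eq:euler_2nd_equation} and the remark after eq.~\eqref{eq:complex_burgers}; yours is a wave equation.

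Even with the correct sign, a maximum principle gives nothing here. Since $X(t,\cdot)$ is nondecreasing, $\sup_uX(t,u)=X(t,1)$ sits on the lateral boundary $u=1$. There is no Dirichlet data there, only the natural condition $\partial_uX=+\infty$, i.e.\ $\rho=0$.

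The particle picture confirms this. From $G_{N,\eps}$ one gets $\ddot x_N=-\frac{\pi^2}{3(N+1)^2}\big(x_N-x_{N-1}+\frac{\eps}{N+1}\big)^{-3}<0$. So $t\mapsto x_N(t)$ is concave and the edge bulges outward. Concavity only yields the lower bound $x_N(t)\ge(1-t)x_N(0)+t\,x_N(1)$. An upper bound needs control of the initial edge velocity, which is the whole difficulty.

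In addition, the Euler--Lagrange equation is only known on the interior of $\{\rho>0\}$. That regularity is itself imported from the same reference.

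\textbf{The free Brownian bridge route.} This cannot be closed by uniqueness, because uniqueness identifies a path with the minimizer only once you know the path is optimal. The coupling of $(X_0,X_1)$ in eq.~\eqref{eq:dbb} is not arbitrary. Conditioning the Dyson Brownian motion on its endpoint gives the matrix bridge $(1-t)A_d+tOB_dO^\dagger+\sqrt{t(1-t)}\,W$, with $O$ drawn from the HCIZ-tilted Haar measure. The coupling is the large-$d$ limit of the law of $(A_d,OB_dO^\dagger)$ under that measure.

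Showing that this conditioned path concentrates on the minimizer is what \cite{guionnet2004first} establishes, via large deviations for the Hermitian Brownian motion. A bridge built from another coupling, for example free or monotonically coupled $X_0,X_1$, has the right endpoints but is in general not optimal.

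So you have two options. You can cite eq.~\eqref{eq:dbb} as a statement about the minimizer. This immediately gives $\supp\rho^*(t,\cdot)\subseteq[-K-1,K+1]$ when $\supp\mu_0,\supp\mu_1\subseteq[-K,K]$, and it is effectively what the paper does. Otherwise you must supply the large-deviation argument yourself.
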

\begin{remark}
For $(\rho, v) \in \adm(\mu_0, \mu_1)$, the function $v$ is an element of $L^1((0,1)\times \bbR; \rho(t,x) \, \rd t \, \rd x)$, 
and it is in this sense that the uniqueness of the minimizer stated in Theorem~\ref{t.exist.min} should be understood. 
In other words, any two minimizers $(\rho_1, v_1)$ and $(\rho_2,v_2)$ of $G$ over $\adm(\mu_0,\mu_1)$ must be such that $(\rho_1, \rho_1 v_1) = (\rho_2, \rho_2 v_2)$ on a set of full Lebesgue measure. 
\end{remark}

\myskip
Recall the definition of the discretization scheme in Section~\ref{subsubsec:def_discretization}.
The proof of Theorem~\ref{thm:main} will be decomposed into a number of intermediate results. 
We first prove Proposition~\ref{p.gamma.Gep.G}, which shows continuity with respect to the limit $\eps \to 0$ in the definition of $J_{\eps}(\mu_0, \mu_1)$. 
We then move to Propositions~\ref{p.the.easier.bound} and \ref{p.one.side}, proved respectively in Sections~\ref{subsec:upper_bound} and~\ref{subsec:lower_bound}.
These propositions give upper and lower bounds on $J(\mu_0,\mu_1)$ and, together with Proposition~\ref{p.gamma.Gep.G}, 
directly imply Theorem~\ref{thm:main}. 
We finish with a remark in Section~\ref{subsec:final_remark_proof} on a conjectural alternative formulation of $J(\mu_0, \mu_1)$ that is more directly related to our discretization scheme.

\subsection{Small-regularization limit}\label{subsec:small_eps_limit}

We start by showing that the quantity $J_\eps$, defined in eq.~\eqref{e.def.Jeps}, converges to $J$ as $\eps$ tends to zero\footnote{This could be phrased as the $\Gamma$-convergence of $G_\eps$ to $G$, but we will refrain from using this language here.}.
First, we clearly have that $G_\eps \le G$, so that $J_\eps \leq J$. 
In order to show the converse inequality, we consider approximate minimizers of $J_\eps$, and show that, after passing to a subsequence, we can obtain a candidate minimizer for the functional~$J$ as a limit of these approximate minimizers. 
In essence, the argument is very similar to one that would guarantee the existence of minimizers for $J$, and is based on the convexity of the functional $G$ seen as a function of $(\rho, \rho v)$. 

\myskip
We remark that the existence of a (unique) minimizer for $G_\eps$ for a fixed $\eps  > 0$ would be more subtle to prove, as the functional $G_\eps$ is less coercive than $G$; in particular, approximate minimizers of $G_\eps$ might be such that, for $t \in (0,1)$, the measure $\rho(t,x) \, \rd x$ converges to a probability measure that is not absolutely continuous with respect to the Lebesgue measure. 
This difficulty might be addressed e.g.\ by assuming that $\mu_0$ and $\mu_1$ are absolutely continuous with respect to the Lebesgue measure and using some arguments in the spirit of the theory of optimal transport, or by enlarging the space of minimizer candidates; but for our purposes we will simply avoid this issue throughout. 
\begin{proposition}
\label{p.gamma.Gep.G}
For every $\mu_0, \mu_1 \in \mcP(\bbR)$, we have
\begin{equation}
\label{e.gamma.Gep.G}  
\lim_{\eps \to 0} J_\eps(\mu_0,\mu_1) = J(\mu_0,\mu_1).
\end{equation}
\end{proposition}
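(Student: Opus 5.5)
Since $G_\eps \le G$ pointwise, we have $J_\eps \le J$ for every $\eps>0$; moreover $\eps \mapsto G_\eps$ is nonincreasing, so $J_\eps$ is nonincreasing as $\eps \downarrow 0$ and $\lim_{\eps\to0} J_\eps$ exists and is at most $J$. It therefore suffices to show $\liminf_{\eps \to 0} J_\eps(\mu_0,\mu_1) \ge J(\mu_0,\mu_1)$. We may assume this liminf is finite, and take a sequence $\eps_n \downarrow 0$ together with $(\rho_n, v_n) \in \adm(\mu_0,\mu_1)$ satisfying $G_{\eps_n}(\rho_n,v_n) \le J_{\eps_n} + 1/n \le C$.

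\textbf{Compactness.} Set $m_n \coloneqq \rho_n v_n$. The kinetic part of the bound gives $\int_0^1\!\int v_n^2 \rho_n \le 2C$. By Cauchy--Schwarz, $|m_n|$ has uniformly bounded total mass on $(0,1)\times\bbR$, and the curves $t\mapsto \rho_n(t,\cdot)\,\rd x$ are equi-$1/2$-Hölder in the Wasserstein-2 distance, since $W_2(\rho_n(s),\rho_n(t))^2 \le |t-s|\int_s^t\!\int v_n^2\rho_n$. Using the fixed endpoints, second moments are uniformly bounded on $[0,1]$ (relative to $\mu_0$; if $\mu_0$ lacks a second moment, argue instead with tightness via $W_1$-type or Lipschitz test-function bounds). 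By Arzelà–Ascoli in the weak topology, after extraction $\rho_n(t)\,\rd x \to \rho_t$ weakly, uniformly in $t$, with $\rho_0=\mu_0$ and $\rho_1=\mu_1$. In addition, $m_n \to m$ weakly-* as measures on $(0,1)\times \bbR$. The continuity equation $\partial_t\rho+\partial_x m = 0$ passes to the limit by testing against smooth compactly supported functions.

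\textbf{Lower semicontinuity: the main obstacle.} The difficulty is that the penalty term $\rho\,(\rho/(1+\eps\rho))^2$ grows only linearly at large $\rho$ when $\eps>0$. The bound therefore does not prevent concentration, and we must show that the limit $\rho_t$ is absolutely continuous and that $G(\rho,v)\le\liminf G_{\eps_n}(\rho_n,v_n)$. I would handle this by freezing a level $\eta>0$. For $n$ large we have $\eps_n \le \eta$, hence $G_{\eps_n} \ge G_\eta$ on these trajectories, so $G_\eta(\rho_n,v_n)\le C$. The functional $(\rho,m)\mapsto \frac12\int\int \big(m^2/\rho + \frac{\pi^2}{3}\,\rho^3/(1+\eta\rho)^2\big)$ is a convex, lower semicontinuous integral functional of the measures $(\rho\,\rd t\,\rd x, m)$. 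The map $\rho\mapsto\rho^3/(1+\eta\rho)^2$ is convex on $[0,\infty)$ and has recession slope $\eta^{-2}$, and $(\rho,m)\mapsto m^2/\rho$ is jointly convex and $1$-homogeneous. By the Ambrosio–Fusco–Pallara / Reshetnyak lower semicontinuity theorem for convex integrands of measures, we get
\[
\frac12\iint \Big(\frac{m_{\rm ac}^2}{\rho_{\rm ac}} + \frac{\pi^2}{3}\frac{\rho_{\rm ac}^3}{(1+\eta\rho_{\rm ac})^2}\Big)\,\rd x\,\rd t + \frac{\pi^2}{6\eta^2}\,\rho^{\rm s}\big((0,1)\times\bbR\big) \le C',
\]
where $C' \coloneqq \liminf_n G_{\eps_n}(\rho_n,v_n)$. (The kinetic term forces $m\ll\rho$, so there is no singular kinetic contribution.) Sending $\eta\to0$ kills the singular part $\rho^{\rm s}$, since its mass is at most $C'\cdot 6\eta^2/\pi^2$. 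Hence $\rho_t$ is absolutely continuous for a.e.\ $t$. Because $t\mapsto\rho_t$ is weakly continuous, we can redefine it on a null set of times, which fixes condition 1 of Definition~\ref{def:adm}. Writing $v\coloneqq \rd m/\rd\rho$, monotone convergence as $\eta\to 0$ gives $G(\rho,v)\le C' = \liminf_n J_{\eps_n}$. Integrability \eqref{eq:integrability} follows from the finiteness of the kinetic energy together with Cauchy–Schwarz.

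\textbf{Conclusion.} We obtain $(\rho,v)\in\adm(\mu_0,\mu_1)$ with $J\le G(\rho,v)\le\liminf_n J_{\eps_n}$. Combined with the monotonicity established at the start, this proves \eqref{e.gamma.Gep.G}. If $J_\eps$ stays $+\infty$ along the sequence, the claim is trivial. The delicate points I would verify carefully are the tightness of $(\rho_n)$ for general $\mu_0,\mu_1\in\mcP(\bbR)$, and the precise statement of the lower semicontinuity theorem for integrands with linear growth in the singular part.
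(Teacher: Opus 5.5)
Your proof is correct. Its skeleton is the same as the paper's: near-minimizers, Wasserstein equicontinuity plus Arzel\`a--Ascoli, a weak-$*$ limit of the momenta, and passage to the limit in the continuity equation. The genuine difference lies in the central step, namely absolute continuity of the limit and lower semicontinuity.

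The paper does this step by hand. For $\eps$ small in terms of $M$ it proves $\iint \indi_{\{\rho_\eps \ge M\}}\rho_\eps \le 24\pi^{-2}M^{-2}(\mcI+1)$. Hence $\iint_A \rho_\eps \le M|A| + 24\pi^{-2}M^{-2}(\mcI+1)$ for every measurable $A$, and this passes to the limit by outer regularity and the portmanteau theorem. The cubic term is then treated with a truncated convex function $\chi_K$ of linear growth lying below $x^3/(1+\eps x)^2$ for small $\eps$, followed by monotone convergence in $K$. The kinetic term is handled via the dual formula $\sup_s (2sx - s^2 y)$ and a Riesz representation giving $\mathsf m = v\rho$.

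You instead freeze $\eta$, use $G_{\eps_n} \ge G_\eta$, and apply the Reshetnyak/Ambrosio--Fusco--Pallara lower semicontinuity theorem for convex integrands of measures. Absolute continuity then drops out of the recession slope $\eta^{-2}$ of $\rho \mapsto \rho^3/(1+\eta\rho)^2$, and your $\eta$ plays exactly the role of the paper's $1/M$ and $1/K$.

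Your route is shorter and handles both terms and all singular parts in one stroke. The price is reliance on an external theorem, whose hypotheses do hold here: a nonnegative, convex, lower semicontinuous integrand, and local weak-$*$ convergence on $(0,1)\times\bbR$. The singular kinetic contribution is nonnegative rather than absent, so dropping it is legitimate. The paper's argument, by contrast, is elementary and self-contained.

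Three minor remarks. First, strictly speaking $J_\eps$ is nondecreasing as $\eps \downarrow 0$; your conclusion is unaffected. Second, your suggested $W_1$ fallback for endpoints without second moments is what the paper actually uses. Third, as in the paper, what you really obtain is absolute continuity of $\rho_t$ for a.e.\ $t$. Any modification on a null set of times should therefore preserve the limits as $t \to 0$ and $t \to 1$, e.g.\ by mollifying at a scale smaller than $\min(t,1-t)$.
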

\begin{proof}[Proof of Proposition~\ref{p.gamma.Gep.G}]
The existence of the limit on the left-hand side is guaranteed by monotone convergence.
The inequality $\le$ between the left and the right sides of eq.~\eqref{e.gamma.Gep.G} is clear, so we turn to the converse bound. 
Without loss of generality, we can assume that the left side of eq.~\eqref{e.gamma.Gep.G} is finite; we denote it by $\mcI \in [0,+\infty)$. 
For each $\eps \in (0,1]$, we let $(\rho_\eps, v_\eps) \in \adm(\mu_0,\mu_1)$ be such that 
\begin{equation}  
\label{e.this.will.come.back}
G_\eps(\rho_\eps,v_\eps) \le \eps + J_\eps(\mu_0,\mu_1) ,
\end{equation}
and we write $m_\eps \coloneqq \rho_\eps v_\eps$. 
By the continuity equation and \cite[Lemma~8.1.2]{ambrosio2008gradient}, we have for every $\phi \in C^\infty_c(\bbR)$ and $0 < s < t < 1$ that
\begin{equation*}  
\int \phi(x) (\rho_\eps(t,x) - \rho_\eps(s,x)) \,  \rd x = \int_s^t \int_\bbR\partial_x \phi(x) m_\eps(u,x) \, \rd x \, \rd u,
\end{equation*}
and thus, by the Kantorovich duality (e.g.~\cite[Theorem~1.14]{villani2003topics}), we have
\begin{equation*}  
W_1(\rho_\eps(s,x) \rd x, \rho_\eps(t,x) \rd x) \le \int_s^t \int_\bbR |m_\eps(t,x)| \, \rd x \,\rd t,
\end{equation*}
where $W_1$ denotes the $L^1$-Wasserstein distance. We also observe that 
\begin{align*}  
\int_s^t \int_\bbR |m_\eps(t,x)| \, \rd x \, \rd t 
& 
\le \left(\int_s^t \int_\bbR \frac{|m_\eps(t,x)|^2}{\rho_\eps(t,x)} \, \rd x \, \rd t\right)^\frac 1 2\left(\int_s^t \int_\bbR {\rho_\eps(t,x)} \, \rd x \, \rd t\right)^\frac 1 2
\\
& \le \sqrt{(2\mcI+2)|t-s|}. 
\end{align*}
The functions $t \mapsto \rho_\eps(t,x) \rd x$ are thus uniformly equicontinuous with respect to the $W_1$ distance on probability measures. Since $\lim_{t \to 0} \rho_\eps(t,x) \, \rd x = \mu_0$ is fixed, after passing to a subsequence, we can assume that these functions converge to some continuous map $t \mapsto \mu(t,\rd x)$ by the Arzel\`a--Ascoli theorem.
The display above also shows that $|m_\eps(t,x)| \rd x \rd t$ is a non-negative measure with a total mass that is uniformly bounded over $\eps$; 
we can therefore extract a subsequence that converges vaguely to a non-negative measure $\mathsf{m}$ over $[0,1] \times \bbR$. One can directly check that the pair $(\mu(t,\rd x) \, \rd t, \mathsf{m})$ satisfies the continuity equation in the weak sense, in the sense that $\partial_t \mu + \partial_x \mathsf m = 0$. 

\myskip
We now observe that $\mu(t,\rd x) \rd t$ is in fact absolutely continuous with respect to the Lebesgue measure. Indeed, we first observe that for all $M \in (0,+\infty)$ and $\eps > 0$ sufficiently small (in terms of $M$), 
\begin{align*}  
\int_0^1 \int_\bbR \indi_{\{\rho_\eps(t,x) \ge M\}} \rho_\eps(t,x) \, \rd x \, \rd t 
&
\le 4 M^{-2}  \int_0^1 \int_\bbR  \left(\frac{\rho_\eps(t,x)}{1+\eps \rho_\eps(t,x)}\right)^2 \rho_\eps(t,x) \, \rd x \, \rd t
\\
& \le 24 \pi^{-2} M^{-2} \left( \mcI + 1 \right) .
\end{align*}
Hence, for every measurable set $A \subset [0,1] \times \bbR$ and $\eps > 0$ sufficiently small, we have 
\begin{equation}\label{eq:bound_A_rhoeps}
\int_0^1 \int_\bbR \indi_{A}(t,x) \rho_\eps(t,x) \, \rd x \, \rd t \le M |A| +  24 \pi^{-2}M^{-2} (\mcI + 1),
\end{equation}
where $|A|$ denotes the Lebesgue measure of $A$.
Recall that for every bounded Borel set $A$, one can find an open set $U$ containing $A$ such that $|U\setminus A|$ is as small as desired (see e.g.\ \cite[Exercise~A.4 and solution]{dominguez2024statistical}). 
Using this, the estimate above, and the Portmanteau theorem, we obtain that $\mu(t,\rd x) \rd t$ is absolutely continuous with respect to the Lebesgue measure. We can therefore write $\mu(t,\rd x)  \rd t = \rho(t,x)  \rd x \rd t$ for some probability density $\rho \in L^1([0,1]\times \bbR)$. Let us fix $K \in [0,+\infty)$ and denote 
\begin{equation}\label{eq:def_chiK}
\chi_K(x) \coloneqq  (3-K^{-1}) \int_0^x (u \wedge K)^2 \, \rd u.
\end{equation}
The function $\chi_K$ is convex over $[0,+\infty)$, and for $\eps > 0$ sufficiently small (in terms of $K$), we have $\chi_K(x) \le x^3/(1+\eps x)^2$ for all $x \ge 0$. 
The weak convergence of $\rho_\eps$ to $\rho$ and the convexity of $\chi_K$ ensure that 
\begin{equation*}  
\int_0^1\int_\bbR \chi_K(\rho(t,x)) \, \rd x \, \rd t \le \liminf_{\eps \to 0} \int_0^1\int_\bbR \chi_K(\rho_\eps(t,x)) \, \rd x \, \rd t,
\end{equation*}
and the comparison between $\chi_K$ and $x \mapsto x^3/(1+\eps x)^2$ for small $\eps > 0$ yields that
\begin{equation*}  
\int_0^1\int_\bbR \chi_K(\rho(t,x)) \, \rd x \, \rd t \le \liminf_{\eps \to 0} \int_0^1\int_\bbR \left(\frac{\rho_\eps(t,x)}{1+\eps \rho_\eps(t,x)}\right)^2 \rho_\eps(t,x) \, \rd x \, \rd t.
\end{equation*}
By the monotone convergence theorem, we can then send $K$ to infinity on the left side to obtain that
\begin{equation}  
\label{e.cubic.bound}
\int_0^1\int_\bbR \rho(t,x)^3 \, \rd x \, \rd t \le \liminf_{\eps \to 0} \int_0^1\int_\bbR \left(\frac{\rho_\eps(t,x)}{1+\eps \rho_\eps(t,x)}\right)^2 \rho_\eps(t,x) \, \rd x \, \rd t.
\end{equation}
We next notice that, for every $x,y \in \bbR$, 
\begin{equation}  
\label{e.convex.x2y}
\sup_{t \in \bbR} \left( 2t x - t^2 y \right)  = 
\begin{cases}  
\frac{x^2}{y} & \text{ if } y > 0, \\
0 & \text{ if } x = y = 0, \\
+\infty & \text{ otherwise }.
\end{cases}
\end{equation}
Hence, for every $\phi \in C^\infty_c([0,1]; \bbR)$, we have 
\begin{equation}\label{eq:bound_I_1}
\int_0^1\int_\bbR (2 \phi m_\eps - \phi^2 \rho_\eps)(t,x) \, \rd x \, \rd t \le  \int_0^1 \int_\bbR \frac{|m_\eps(t,x)|^2}{\rho_\eps(t,x)} \, \rd x \, \rd t \le 2 \mcI + 2\eps.
\end{equation}
Passing to the limit, we obtain that 
\begin{equation}\label{eq:bound_I_2}
\int 2 \phi(t,x) \rd \mathsf{m}(t,x) - \int_0^1 \int_\bbR \phi(t,x)^2 \rho(t,x) \, \rd x \, \rd t \le 2 \mcI .
\end{equation}
Replacing $\phi$ by $\lambda \phi$ for arbitrary $\lambda \in \bbR$ and calculating the discriminant of the resulting quadratic function of $\lambda$, we obtain 
\begin{equation*}  
\left(\int \phi(t,x) \rd \mathsf{m}(t,x)\right)^2 \le 2 \mcI \int_0^1 \int_\bbR \phi(t,x)^2 \rho(t,x) \,\rd x \,  \rd t .
\end{equation*}
By the Riesz representation theorem, we deduce that there exists $v \in L^2([0,1]\times \bbR; \rho(t,x) \rd x \rd t)$ such that $\rd \mathsf m(t,x) = v(t,x) \rho(t,x) \rd x \rd t$. Recalling the first inequality in \eqref{eq:bound_I_1}, and using also~\eqref{e.cubic.bound}, we thus obtain that for every $\phi \in C^\infty_c([0,1];\bbR)$,
\begin{equation}\label{eq:bound_I}
\frac 1 2\int_0^1 \int_\bbR (2\phi v \rho - \phi^2 \rho + \frac{\pi^2}{3}\rho^3)(t,x) \, \rd x \, \rd t \le \mcI.
\end{equation}
Optimizing over $\phi$ yields that 
\begin{equation*}  
\frac 1 2 \int_0^1 \int_\bbR \left( v(t,x)^2 + \frac{\pi^2}{3}\rho(t,x)^2 \right) \rho(t,x) \, \rd x \, \rd t \le \mcI,
\end{equation*}
and we recall that the pair $(\rho, v)$ satisfies the continuity equation \eqref{eq:continuity}. We have thus upper-bounded the right-hand side of \eqref{e.gamma.Gep.G} by the left-hand side of this display, as desired. 
\end{proof}

\subsection{Upper bound}\label{subsec:upper_bound}

Our next goal is to upper bound
$J_\eps(\mu_0, \mu_1)$ by $\liminf_{N \to \infty} J_{N,\eps}(\mu_0, \mu_1)$. 
We will show that from each $\{(x_i, v_i)\}_{i=1}^N \in \adm_N(\mu_0,\mu_1)$,
we can build a continuous pair $(\rho_N,v_N)$ such that $G_\eps(\rho_N,v_N) = G_{N,\eps}(\{(x_i, v_i)\})$. 
A difficulty will arise from the fact that $\rho_N(0,x) \, \rd x$ is almost but not exactly equal to $\mu_0$. As discussed in the paragraph preceding Proposition~\ref{p.gamma.Gep.G}, we prefer to avoid having to construct minimizers for $G_\eps$, so instead of simply letting $N$ tend to infinity to construct some $(\rho,v) \in \adm(\mu_0,\mu_1)$ with $G_\eps(\rho,v) \le J_{N,\eps}(\mu_0,\mu_1)$, we also let $\eps \to 0$ at the same time and obtain a candidate for $G$ in place of $G_\eps$. 
We thus do not really show that $J_\eps \le \liminf_N J_{N,\eps}$, but only the following weaker statement, which is sufficient for our purposes.

\begin{proposition}
\label{p.the.easier.bound}
For every $\mu_0, \mu_1 \in \mcP(\bbR)$  with $\Sigma(\mu_0), \Sigma(\mu_1)$ finite, we have
\begin{equation}  
\label{e.the.easier.bound}
J(\mu_0,\mu_1) \le \lim_{\eps \to 0} \liminf_{N \to +\infty} J_{N,\eps}(\mu_0,\mu_1).
\end{equation}
\end{proposition}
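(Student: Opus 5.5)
Given near-optimal particle trajectories for each $N$ and $\eps$, I will build a continuous pair $(\rho_N, v_N)$ by spreading mass uniformly between consecutive particles. Its cost equals the discrete cost $G_{N,\eps}$, and I will then pass to the limit $N\to\infty$, $\eps\to 0$ jointly, following the compactness and lower-semicontinuity argument of Proposition~\ref{p.gamma.Gep.G}.

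\textbf{Construction.} Fix $\eps>0$ and $\{(x_i,v_i)\}\in\adm_N(\mu_0,\mu_1)$ with $G_{N,\eps}\le J_{N,\eps}+1/N$. On the cell $(x_i(t),x_{i+1}(t))$, $1\le i\le N-1$, I would set
\[
\rho_N(t,x)=\frac{1}{(N+1)\,(x_{i+1}(t)-x_i(t)+\eps/(N+1))}
\]
and define $v_N$ by linear interpolation of $v_i,v_{i+1}$ in $x$. This is the unique velocity that transports the uniform cell mass consistently with the continuity equation, given that the endpoints move with velocities $v_i,v_{i+1}$.

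Adding $\eps/(N+1)$ to the width is designed so that $\rho/(1+\eps\rho)$ equals exactly $1/((N+1)\cdot\text{width})$ with respect to the true width. One then checks that $\int\rho^3/(1+\eps\rho)^2$ over the cell reproduces the $(x_{i+1}-x_i+\eps/(N+1))^{-2}$ term up to a bounded factor; I would adjust the density so the identity is exact, e.g.\ by placing the deficit mass at the particles.

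The kinetic term over a cell is $\int \rho v^2 \le \frac{1}{N+1}\cdot\frac{v_i^2+v_{i+1}^2}{2}$ by convexity. Summing gives the $\frac1{2N}\sum v_i^2$ term up to $(1+O(1/N))$.

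\textbf{Boundary mass.} The two outer masses $i/(N+1)$ near $\pm\infty$ and the $\eps$-deficits do not fit the cell construction. I would transport them as point-like or thin pieces attached to $x_1$ and $x_N$, at kinetic cost $O(1/N)\int v_1^2$. Since $\int v_1^2/N \le 2G_{N,\eps}$ is bounded, this is controlled, but these pieces spoil absolute continuity. At $t=0,1$ the resulting measure is not exactly $\mu_0,\mu_1$; it converges weakly to them as $N\to\infty,\eps\to0$, because the quantiles match and the $\eps$-mass is $O(\eps)$.

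\textbf{Limit.} Take $\eps_k\to0$ and $N_k\to\infty$ with $\liminf_N J_{N,\eps_k}$ attained up to $\eps_k$. Then repeat the proof of Proposition~\ref{p.gamma.Gep.G}:
\begin{itemize}
\item $W_1$ equicontinuity follows from the kinetic bound, and Arzel\`a--Ascoli gives a limit curve $\mu(t,\cdot)$.
\item The limiting boundary values are $\mu_0,\mu_1$, by the boundary convergence above together with equicontinuity up to $t\in\{0,1\}$.
\item Absolute continuity follows from the bound \eqref{eq:bound_A_rhoeps}. The vanishing singular boundary pieces are handled by noting that their mass tends to zero.
\item The cubic bound follows from \eqref{e.cubic.bound}.
\item The momentum representation follows from the Riesz argument.
\end{itemize}
This produces $(\rho,v)\in\adm(\mu_0,\mu_1)$ with $G(\rho,v)\le\lim_{\eps\to0}\liminf_N J_{N,\eps}$.

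\textbf{Main obstacle.} I expect the hardest part to be the exact bookkeeping between the discrete $\eps$-regularized interaction and $G_\eps$ of the interpolated density, together with the leftover mass (the tails and the $\eps$-deficit). The issue is that $\rho_N(t,\cdot)$ must be a genuine probability density whose cost stays at most $G_{N,\eps}+o(1)$. My first attempt would be to allow a small singular or spread-out remainder of mass $O(\eps+1/N)$ and show it disappears in the limit.
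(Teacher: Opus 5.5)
\textbf{There is a genuine gap in your construction.} Your overall plan matches the paper's: piecewise-uniform density between consecutive particles, affine interpolation of the velocity, then a diagonal sequence $N_k\to\infty$, $\eps_k\to 0$ fed into the compactness argument of Proposition~\ref{p.gamma.Gep.G}. The construction itself, however, has two defects, and the ``main obstacle'' you flag is self-inflicted.

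\textbf{First defect: the continuity equation fails.} With $\rho_N=1/((N+1)w_i+\eps)$ on a cell of width $w_i$, the cell mass $w_i/((N+1)w_i+\eps)$ varies with $w_i(t)$. A uniform density on an interval transported by the affine velocity field must keep $\rho\, w_i$ constant, since $\partial_x v=\dot w_i/w_i$. So your pair does not satisfy the continuity equation.

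The motivating computation is also off. For your $\rho$ one gets $\rho/(1+\eps\rho)=1/((N+1)w_i+2\eps)$. The plain choice $\rho=1/((N+1)w_i)$ already gives $\rho/(1+\eps\rho)=1/((N+1)w_i+\eps)$. Its cell integral $\int(\rho/(1+\eps\rho))^2\rho=(N+1)^{-3}(w_i+\eps/(N+1))^{-2}$ is exactly the discrete regularized term, so there is no $\eps$-deficit to dispose of.

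Your remedy, putting the deficit mass at the particles, does not work. It leaves $\adm$. It needs an unaccounted change of velocity to move mass between cells and particles. And it is expensive: under $G_\eps$ a concentrated mass $m$ costs about $\frac{\pi^2}{6}\eps^{-2}m$, so a deficit of order $\eps$ costs order $\eps^{-1}$.

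\textbf{Second defect: the tail pieces cost too much.} The pieces of mass $1/(N+1)$ riding with $x_1$ and $x_N$ carry kinetic energy $\frac{1}{2(N+1)}\int_0^1(v_1^2+v_N^2)\,\rd t$. The bound $\frac{1}{2N}\int_0^1 v_1^2\,\rd t\le G_{N,\eps}$ only shows this is at most of order $G_{N,\eps}$, not that it vanishes. Nothing prevents the extremal particles from carrying an order-one share of the kinetic energy. You would end up with $G(\rho,v)\le\lim_{\eps\to0}\liminf_N J_{N,\eps}+O(1)$.

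\textbf{The paper's fix.} One renormalization removes both problems: give each of the $N-1$ cells mass $1/(N-1)$. That is, take $\rho=1/((N-1)(x_{i+1}-x_i))$ on $(x_i,x_{i+1})$ and $\rho=0$ outside $[x_1,x_N]$. Equivalently, the CDF $f$ is piecewise affine with $f(t,x_i(t))=(i-1)/(N-1)$ and satisfies $\partial_t f+v\,\partial_x f=0$.
\begin{itemize}
\item This is an honest absolutely continuous admissible path, with no tails and no singular parts.
\item The tail mass is spread over all cells rather than attached to the extremal particles, so the kinetic part of $G_\eps(\rho,v)$ is at most $\frac{1}{2(N-1)}\sum_{i}\int_0^1 v_i^2\,\rd t$.
\item The interaction over cell $i$ is exactly $(N-1)^{-3}(w_i+\eps/(N-1))^{-2}\le (N-1)^{-3}(w_i+\eps/(N+1))^{-2}$.
\item Together these give $G_\eps(\rho,v)\le\big(\frac{N+1}{N-1}\big)^3G_{N,\eps}$.
\end{itemize}
The endpoint mismatch is harmless. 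The values $f_k(x_i(0))=(i-1)/(N_k-1)$ and $\mu_0((-\infty,x_i(0)])=i/(N_k+1)$, together with monotonicity, force uniform convergence of the CDFs. One also perturbs the trajectories beforehand so that $x_i<x_{i+1}$ strictly, which avoids degenerate cells. With this construction in place, your limit step goes through as you describe.
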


\begin{proof}[Proof of Proposition~\ref{p.the.easier.bound}]
Let $\{(x_i, v_i)\} \in \adm_N(\mu_0,\mu_1)$. 
Without loss of generality, we assume that $x_i(t) < x_{i+1}(t)$ for every $i \in \{1,\ldots, N-1\}$ and $t \in [0,1]$ (indeed, if this were not the case, we could modify the $v$'s ever so slightly to enforce this constraint while changing $G_{N,\eps}(x,v)$ infinitesimally).
For every $t \in [0,1]$ and $i \in \{1,\ldots, N\}$, we set
\begin{equation*}  
f(t,x_i(t)) \coloneqq \frac{i-1}{N-1} \quad \text{ and } \quad v(t,x_i(t)) \coloneqq v_i(t).
\end{equation*}
We extend $f(t,\cdot)$ and $v(t,\cdot)$ to continuous functions that are affine on the intervals $[x_{i}(t), x_{i+1}(t)]$ ($i \in \{1,\ldots, N-1\}$), with $f(t,x) = 0$ for $x \le x_1(t)$ and $f(t,x) = 1$ for $x \ge x_N(t)$; for definiteness, we can also extend $v(t,\cdot)$ so that it is constant on $(-\infty,x_1(t)]$ and on $[x_N(t), +\infty)$, but this will not play any role. Explicitly, for every $i \in \{1,\ldots, N-1\}$ and $y \in [0, x_{i+1}(t) - x_{i}(t)]$, we have
\begin{equation*}  
f(t,x_i(t) + y) = \frac{i-1}{N-1} + \frac y {(N-1)(x_{i+1}(t) - x_i(t))}.
\end{equation*}
Hence, for every $y \in (0, x_{i+1}(t) - x_{i}(t))$, we have
\begin{equation}  
\label{e.partial_xf}
\partial_x f(t,x_i(t) + y) =  \frac 1 {(N-1)(x_{i+1}(t) - x_i(t))}
\end{equation}
and 
\begin{equation*}  
(\partial_t f + v_i(t) \partial_x f)(t,x_i(t) + y) = - \frac{y(v_{i+1}(t) - v_i(t))}{(N-1) (x_{i+1}(t) - x_i(t))^2}.
\end{equation*}
Combining the two previous displays yields that 
\begin{equation*}  
\partial_t f(t, x_i(t) + y) + \left( v_i(t) + \frac{y(v_{i+1}(t) - v_i(t))}{x_{i+1}(t) - x_i(t)} \right) \partial_x f(t,x_i(t) + y) = 0, 
\end{equation*}
and the expression between the large parentheses is $v(t,x_i(t) + y)$. We have thus shown that the continuity equation
\begin{equation}  
\label{e.equation.for.f}
\partial_t f + v \partial_x f = 0
\end{equation}
is satisfied at every point of differentiability of $f$. Since the function $f$ is Lipschitz continuous and vanishes on $\{(t,x) : x \le x_1(t)\}$,
we have for every $t \in [0,1]$ and $x \in \bbR$ that
\begin{equation*}  
f(t,x) = f(0,x) + \int_0^t \partial_t f(s,x) \, \rd s  = \int_{-\infty}^x \partial_x f(t,y) \, \rd y,
\end{equation*}
where the derivatives $\partial_t f$ and $\partial_x f$ are well-defined almost everywhere, and thus for every test function $\phi \in C^\infty_c((0,1)\times \bbR)$, we have
\begin{equation*}  
\int_0^1 \int_\bbR \partial_x \phi \, \partial_t f = -\int_0^1 \int_\bbR \partial_t \partial_x  \phi \, f = \int_0^1 \int_\bbR \partial_t \phi \, \partial_x f .
\end{equation*}
Setting $\rho(t,x) \coloneqq \partial_x f(t,x)$ (see eq.~\eqref{e.partial_xf}), we deduce from \eqref{e.equation.for.f} and the previous display that the pair $(\rho, v)$ satisfies the continuity equation \eqref{eq:continuity} in the weak sense.
For every $i \in \{1,\ldots, N-1\}$ and $t \in [0,1]$, we use the fact that $\rho(t,y)$ is constant over $(x_i(t),x_{i+1}(t))$ and integrates to $1/(N-1)$ there, while $v(t,y)$ is affine on this interval, to obtain that 
\begin{equation*}  
\int_{x_i(t)}^{x_{i+1}(t)} v(t,y)^2\rho(t,y) \, \rd y \le \frac{1}{2(N-1)} (v_{i}(t)^2 + v_{i+1}(t)^2). 
\end{equation*}
Recalling the expression for $\rho$ in \eqref{e.partial_xf}, we also have that 
\begin{equation*}  
\int_{x_i(t)}^{x_{i+1}(t)}  \left( \frac{\rho(t,y)}{1+\eps \rho(t,y)} \right) ^2 \rho(t,y)\, \rd y = \frac{1}{(N-1)^3}  \left( x_{i+1}(t) - x_i(t) + \frac{\eps}{N-1} \right)^{-2} .
\end{equation*}
Combining these estimates yields that 
\begin{multline*}  
\frac 1 2 \int_0^1 \int_\bbR \left(v(t,y)^2 + \frac{\pi^2}{3} \left( \frac{\rho(t,y)}{1+\eps \rho(t,y)} \right) ^2 \right)\rho(t,y)\, \rd y \, \rd t 
\\
\le \frac{1}{2(N-1)} \sum_{i = 1}^{N-1} \int_0^1\left( \frac{v_i^2(t) + v_{i+1}^2(t)}{2} + \frac{\pi^2}{3(N-1)^2} \left( x_{i+1}(t) - x_i(t) + \frac{\eps}{N-1} \right)^{-2}  \right) \rd t,
\end{multline*}
and in particular,
\begin{equation}  
\label{e.comp.Gep.GNep}
G_\eps(\rho,v) \le \left( \frac{N+1}{N-1} \right) ^3 G_{N,\eps}(\{(x_i, v_i)\}).
\end{equation}
At this stage we have described a procedure that, for each $N$, $\eps$ and $\{(x_i, v_i)\} \in \adm_N(\mu_0,\mu_1)$, constructs some $(\rho,v)$ such that \eqref{e.comp.Gep.GNep} holds. As mentioned just before the statement of Proposition~\ref{p.the.easier.bound}, we now need to face the problem that $\rho(0,x) \rd x$ is not exactly equal to $\mu_0$. 
To do so, we pick subsequences $N_k \to +\infty$ and $\eps_k \to 0$ as well as $\{(x^{(k)}_{i},v^{(k)}_{i})\} \in \adm_{N_k}(\mu_0,\mu_1)$ in such a way that 
\begin{equation*}  
\lim_{k \to +\infty} G_{N_k,\eps_k}(\{(x^{(k)}_{i},v^{(k)}_{i})\}) = \lim_{\eps \to 0} \liminf_{N \to +\infty} J_{N,\eps}(\mu_0,\mu_1),
\end{equation*}
which without loss of generality we can assume to be finite.
We next construct $(\rho^{(k)}, v^{(k)})$ as above so that 
\begin{equation*}  
G_{\eps_k}(\rho^{(k)},v^{(k)}) \le \left( \frac{N_k+1}{N_k-1} \right) ^3 G_{N_k,\eps_k}(\{(x^{(k)}_{i},v^{(k)}_{i})\}),
\end{equation*}
and in particular, possibly after extracting a further subsequence to guarantee the existence of the limit in $k$,
\begin{equation}  
\label{e.almost.there}
\lim_{k \to +\infty} G_{\eps_k}(\rho^{(k)},v^{(k)}) \le \lim_{\eps \to 0} \liminf_{N \to +\infty} J_{N,\eps}(\mu_0,\mu_1).
\end{equation}
The key point now is that $\rho^{(k)}(0,x) \rd x$ converges to $\mu_0$ as $k$ tends to infinity. Indeed, writing
\begin{equation*}  
f_k(x) \coloneqq \int_{-\infty}^x \rho^{(k)}(0,u) \rd u \quad \text{ and } \quad f(x) \coloneqq \int_{-\infty}^x \rd \mu_0,
\end{equation*}
we recall that, for every $i \in \{1,\ldots, N_k\}$,
\begin{equation*}  
f_k(x^{(k)}_i(0)) = \frac{i-1}{N_k-1} \quad \text{ and } \quad f(x^{(k)}_i(0)) = \frac{i}{N_k+1},
\end{equation*}
This and the monotonicity of $f_k$ and $f$ yield that 
\begin{equation*}  
\lim_{k \to \infty} \|f_k - f\|_{L^\infty(\bbR)} = 0,
\end{equation*}
which implies the announced convergence in law of $\rho^{(k)}(0,x) \rd x$ to $\mu_0$. 
Notice the strong similarity between eqs.~\eqref{e.almost.there} and~\eqref{e.this.will.come.back}: in both cases, we have a sequence of candidates $(\rho_\eps, v_\eps)$ for $G_\eps$ (possibly along a subsequence of $\eps$'s), with an upper bound on the limit of $G_\eps(\rho_\eps,v_\eps)$, and we aim to construct a candidate $(\rho,v)$ such that $G(\rho,v)$ is smaller than or equal to this limit. The only small difference between our present situation and the one encountered in the proof of Proposition~\ref{p.gamma.Gep.G} comes from the fact that now $\rho^{(k)}(0,x) \rd x$ is not equal to $\mu_0$, but only converges to it as $k$ tends to infinity. This entails essentially no modification to the argument there, so we can conclude that there exists a candidate $(\rho, v) \in \adm(\mu_0, \mu_1)$ such that 
\begin{equation*}  
G(\rho,v) \le \lim_{\eps \to 0} \liminf_{N \to +\infty} J_{N,\eps}(\mu_0,\mu_1),
\end{equation*}
as desired. 
\end{proof}

\subsection{Lower bound}\label{subsec:lower_bound}
We now turn to the converse bound.
\begin{proposition}
\label{p.one.side}
Let $\mu_0 = \rho(0,x) \rd x$, $\mu_1 = \rho(1,x) \rd x \in \mcP_c(\bbR) \cap \mcP_\abc(\bbR)$ with $\rho(0,\cdot), \, \rho(1,\cdot) \in L^3(\bbR)$. For every $\eps > 0$, we have
\begin{equation}  
\label{e.one.side}
\limsup_{N \to \infty}J_{N,\eps}(\mu_0,\mu_1) \le  J(\mu_0,\mu_1).
\end{equation}
\end{proposition}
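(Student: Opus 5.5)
The plan is to start from an (almost) optimal continuous trajectory and build particle trajectories from it in Lagrangian (quantile) coordinates. Two averaging devices, both using Jensen's inequality, let us avoid needing any regularity of $(\rho,v)$. We may assume $J(\mu_0,\mu_1)<\infty$. Let $(\rho,v)=(\rho^*,v^*)$ be the minimizer of Theorem~\ref{t.exist.min}; it is supported in a fixed compact set.

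\textbf{Step 1: Lagrangian coordinates.} For $t\in[0,1]$ and $u\in(0,1)$, let $X(t,u)$ be the quantile function of $\rho(t,x)\,\rd x$. I would show that $t\mapsto X(t,\cdot)$ is absolutely continuous in $L^2(\rd u)$ and that
\begin{equation*}
\int_0^1\!\!\int_0^1 (\partial_t X)^2\,\rd u\,\rd t \le \int_0^1\!\!\int_\bbR v^2\rho\,\rd x\,\rd t .
\end{equation*}
This should follow because in one dimension $W_2(\rho_s,\rho_t)=\|X(s,\cdot)-X(t,\cdot)\|_{L^2(\rd u)}$, together with the standard bound on the metric derivative of a solution of the continuity equation, e.g.\ \cite[Theorem~8.3.1]{ambrosio2008gradient}. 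By the change of variables $x=X(t,u)$ we also have, for a.e.\ $t$,
\begin{equation*}
\int_0^1 \rho(t,X(t,u))^2\,\rd u=\int_\bbR\rho(t,x)^3\,\rd x, \qquad \partial_u X=1/\rho(t,X).
\end{equation*}
I expect this step to be the main technical obstacle: making the Eulerian-to-Lagrangian passage rigorous, including the fact that $X(t,\cdot)$ is strictly increasing and absolutely continuous a.e.\ in $t$ since $\rho_t$ is absolutely continuous with $\rho_t\in L^3$ for a.e.\ $t$.

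\textbf{Step 2: averaged particles in the bulk.} Set $h=1/(N+1)$ and $u_i=ih$. Define the window average
\begin{equation*}
y_i(t)\coloneqq h^{-1}\int_{u_i-h/2}^{u_i+h/2}X(t,u)\,\rd u .
\end{equation*}
Then $\dot y_i$ is the window average of $\partial_tX$, so Jensen gives $\frac1N\sum_i\dot y_i^2\le \frac{N+1}{N}\int_0^1(\partial_tX)^2\,\rd u$. For the spacings, $y_{i+1}-y_i$ is the window average of $\int_u^{u+h}\partial_uX$. Dropping $\eps$, which only lowers the cost, and using the convexity of $s\mapsto s^{-2}$ twice, we get
\begin{equation*}
h^2(y_{i+1}-y_i)^{-2}\le h^{-1}\int_{u_i-h/2}^{u_{i+1}+h/2}\rho(t,X(t,u))^2\,\rd u .
\end{equation*}
Each $u$ is covered by at most two windows, so the sum is controlled by $\int\rho^3$ up to a factor $1+O(h)$. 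A slightly finer overlap bookkeeping, for instance averaging over a single window of the increment $X(u+h)-X(u)$, gives exactly $\int_0^1\rho(t,X)^2\,\rd u$. The ordering $y_i\le y_{i+1}$ is automatic. Hence the trajectories $y_i$ have cost at most $(1+o(1))\,G(\rho,v)=(1+o(1))J$.

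\textbf{Step 3: boundary layers.} The $y_i(0)$ and $y_i(1)$ are not the exact quantiles $x_i(0)$ and $x_i(1)$ of~\ref{bound_constraint_admn}. Fix $\tau\in(0,1/4)$. On $[\tau,1-\tau]$, run $y_i$ reparametrized linearly in time; this multiplies the kinetic cost by $(1-2\tau)^{-1}$ and the spacing cost by $1-2\tau$. On $[0,\tau]$, interpolate affinely from $x_i(0)$ to $y_i(0)$, and similarly on $[1-\tau,1]$. Affine interpolation preserves ordering, so~\ref{noncrossing_constraint_admn} holds.

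The kinetic cost of the layer is $\frac{1}{N\tau}\sum_i (y_i(0)-x_i(0))^2$. This tends to $0$ as $N\to\infty$, because both configurations are $L^2$-approximations of the bounded monotone function $X(0,\cdot)$.

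For the spacing cost, convexity of $s\mapsto s^{-2}$ along the affine interpolation bounds it by $\tau$ times the average of the endpoint spacing costs. At $x_i(0)$, Jensen gives $h^2(x_{i+1}(0)-x_i(0))^{-2}\le h^{-1}\int_{u_i}^{u_{i+1}}\rho_0(X(0,u))^2\,\rd u$, so this cost is bounded by a constant times $\int\rho_0^3$. The same holds for $y_i(0)$ by Step~2. This is exactly where the hypothesis $\rho_0,\rho_1\in L^3$ is used.

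\textbf{Conclusion.} Summing the three pieces gives
\begin{equation*}
\limsup_{N\to\infty} J_{N,\eps}(\mu_0,\mu_1)\le (1-2\tau)^{-1}J+C\tau\left(\|\rho_0\|_3^3+\|\rho_1\|_3^3\right).
\end{equation*}
Letting $\tau\to0$ gives~\eqref{e.one.side}.
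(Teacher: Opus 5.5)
Your argument is correct, and it takes a genuinely different route from the paper. The paper never discretizes the minimizer $(\rho^*,v^*)$ directly. Because $\partial_x v^*$ may blow up (for instance when support components merge), it:
\begin{itemize}
\item lifts $\rho^*$ away from zero on $[-M,M]$;
\item mollifies $(\rho,\rho v)$ in space and time, using the joint convexity of $(\rho,m)\mapsto m^2/\rho+\rho^3$;
\item follows the flow of the resulting smooth field from the exact quantiles (Proposition~\ref{p.discretize}, whose error terms involve $\varepsilon^{-1}$ and $\varepsilon^{-2}$);
\item squeezes time onto $[\eta,1-\eta]$ and bridges to $\mu_0,\mu_1$ with Lemma~\ref{l.bridging}, sending $N\to\infty$, then the smoothing parameters to zero, then $\eta\to 0$.
\end{itemize}
You avoid every regularity question by working in the mass coordinate. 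The kinetic term is controlled by the $W_2$ metric derivative, through the isometry between $(\mathcal{P}_2(\bbR),W_2)$ and quantile functions in $L^2(0,1)$. The cubic term becomes $\int_0^1(\partial_uX)^{-2}\,\mathrm{d}u$, a convex decreasing function of $\partial_uX$. So window averages in $u$ are competitors of cost at most $\frac{N+1}{N}G(\rho,v)$ for every $N$. Only the endpoints then need a bridging step, now between exact and window-averaged quantiles of the same measure.

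This buys you three things: no smoothing, bounds uniform in $\varepsilon\ge 0$, and a rate. The layers cost $O(\tau)+O(1/(N\tau))$, so taking $\tau\asymp N^{-1/2}$ gives $J_{N,\varepsilon}(\mu_0,\mu_1)\le J(\mu_0,\mu_1)+O(N^{-1/2})$. Your construction is also close in spirit to the Lagrangian formulation conjectured in Section~\ref{subsec:final_remark_proof}. The paper's route, in exchange, keeps the bulk argument Eulerian and elementary (flows of a smooth field). It does not need \cite[Theorem~8.3.1]{ambrosio2008gradient}, and it yields a consistency estimate for the natural scheme $\dot x_i=v(t,x_i)$ when $v$ is regular.

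Three points need fixing in the write-up.

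\textbf{Continuity of $X(t,\cdot)$.} $X(t,\cdot)$ is strictly increasing but not absolutely continuous in general. It jumps whenever $\mathrm{supp}\,\rho(t,\cdot)$ has a gap, which does happen for the minimizer (cf.\ Fig.~\ref{fig:sym_mp_to_sc} before the merging time). This is harmless. Let $F_t$ be the distribution function of $\rho(t,x)\,\mathrm{d}x$. Differentiating $F_t(X(t,u))=u$ gives $\partial_uX=1/\rho(t,X(t,u))$ for a.e.\ $u$. Monotonicity gives $X(t,u+h)-X(t,u)\ge\int_u^{u+h}\partial_uX$, and this is the right direction since $s\mapsto s^{-2}$ is decreasing. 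So your identity for $y_{i+1}-y_i$ becomes an inequality, and the Jensen chain survives.

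\textbf{Overlap bookkeeping.} The crude count ``each $u$ lies in at most two windows'' loses a factor $2$ on the $\rho^3$ term, not $1+O(h)$. That would ruin the conclusion, so you really need the finer bookkeeping you allude to. The double average puts on $w$ a tent kernel of height at most $h^{-1}$, supported on $[u_i-h/2,u_{i+1}+h/2]$. These tents, for consecutive $i$, sum to at most $h^{-1}$. This gives exactly $\sum_{i\le N-1}h^2(y_{i+1}-y_i)^{-2}\le h^{-1}\int_0^1\rho(t,X(t,w))^2\,\mathrm{d}w$.

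\textbf{Regularity of $\dot y_i$.} $\dot y_i$ is only in $L^2(0,1)$ rather than continuous. As in the footnote of the paper's proof, you need the routine observation that enlarging the admissible class of Section~\ref{subsubsec:def_discretization} to such trajectories does not change $J_{N,\varepsilon}$.
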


\subsubsection{Proof strategy and preliminary results}

Ideally, we would aim, for each $(\rho,v) \in \adm(\mu_0,\mu_1)$, to construct some paths $\{(x_i, v_i)\} \in \adm_N(\mu_0,\mu_1)$ such that $G_{N,\eps}(\{(x_i, v_i)\})$ 
is close to $G_\eps(\rho,v)$. At a heuristic level, what one needs to do is clear: for each $i \in \{1,\ldots, N\}$, we simply fix $x_i(0)$ as in eq.~\eqref{eq:def_xi_01}, and let $\partial_t x_i(t) = v(t,x_i(t))$. 

\myskip
The main problem is that a control of the error in this argument would require some estimate on the Lipschitz regularity of $v$ in the space variable. 
Denoting by $(\rho^*,v^*)$ the minimizer for $G$, it is not in general the case that $v^*$ is smooth (and the situation for $G_\eps$ can only be worse). Some regularity results on $(\rho^*,v^*)$ can be found in \cite{guionnet2004first}, and it is shown in particular that $\rho^*$ and $v^*$ are smooth on the set $\{(t,x) \in (0,1) \times \bbR \ : \ \rho^*(t,x) > 0\}$. However $\partial_x v^*$ may indeed blow up near the boundary of this set ---
this is clearly necessary if the number of connected components of $\{x : \rho^*(t,x) > 0\}$ changes with time, as was already discussed in the paragraph below \eqref{eq:sym_MP}. 
We will therefore need to employ a regularization argument, i.e.\ to start by showing that one can find some sufficiently smooth pair $(\rho, v)$ such that $G(\rho,v)$ is close to optimal. This smoothing argument is facilitated by the fact that we know in advance from \cite{guionnet2004first} that $\rho^*$ is compactly supported (under the assumption that $\mu_0$ and $\mu_1$ are), so that we can first lift it away from zero on some sufficiently large compact set, and then regularize $\rho$ and $\rho v$ via a convolution procedure. While one might a priori expect to be able to show that $\limsup_{N \to +\infty} J_{N,\eps}(\mu_0,\mu_1) \le J_\eps(\mu_0,\mu_1)$ for each fixed $\eps$, we again only obtain a result in which the limit $\eps \to 0$ is involved, because doing so allows us to exploit this known property of $\rho^*$. 

\myskip
Before going into the smoothing argument, we first describe how to control the error between the continuous $G_{\eps}(\rho,v)$ and suitable discrete approximations when the vector field $v$ is sufficiently regular. 
We denote by $\reg$ the space of bounded continuous functions $v = v(t,x): \bbR_+ \times \bbR \to \bbR$ that are differentiable in $x$ and with $\partial_x v$ bounded and uniformly continuous. 
Fixing $v \in \reg$, we denote by $(\phi_t(x))_{t \ge 0, x \in \bbR}$ the flow associated with the differential equation
\begin{equation}  
\label{e.def.phitx}
\begin{cases}  
\partial_t \phi_t(x) = v(t,\phi_t(x)),\\
\phi_0(x) = x. 
\end{cases}
\end{equation}
As is well-known, for each $t \ge 0$, the mapping $\phi_t$ is a $C^1$-diffeomorphism; we denote its inverse by $\phi_t^{-1}$. We have
\begin{equation}  
\label{e.partial.phit}
\partial_x \phi_t(x) = \exp \left( \int_0^t \partial_x v(s,\phi_s(x)) \, \rd s\right) ,
\end{equation}
and in particular, the first derivatives of $\phi_t$ and $\phi_t^{-1}$ are bounded over $[0,T]\times \bbR$ for every $T < +\infty$. 

\myskip
For such a regular velocity field $v$ and a function $\rho$ satisfying the continuity equation~\eqref{eq:continuity}, we can represent $\rho(t,x) \, \rd x$ as the image of $\mu_0$ through the mapping~$\phi_t$ (see e.g.\ \cite[Lemma~8.1.6]{ambrosio2008gradient}). That is, assuming that $\mu_0 = \rho(0,x) \, \rd x$, we have for every $f \in C^\infty_c(\bbR)$ that
\begin{equation*}  
\int f(x) \rho(t,x) \, \rd x = \int f(\phi_t(x)) \rho(0,x) \, \rd x.
\end{equation*}
From this, we deduce that 
\begin{equation}  
\label{e.rhot.explicit}
\rho(t,x) = \partial_x \phi_t^{-1} (x) \, \rho(0,\phi_t^{-1}(x)) .
\end{equation}
These observations will be useful in relation with the following result.
\begin{proposition}
\label{p.discretize}
Let $v \in \reg$ and $\rho$ be such that $(\rho,v) \in \adm(\mu_0,\mu_1)$, with $\mu_0 = \rho(0,x) \, \rd x \in \mcP_c(\bbR)$ and $\rho(0,\cdot)$ a continuous function. For every $t \in [0,1]$, we define $(x_i(t))_{1 \le i \le N}$ such that for every $i \in \{1,\ldots, N\}$,
\begin{equation*}  
\int_{-\infty}^{x_{i}(t)} \rho(t,\cdot) = \frac i {N+1}.
\end{equation*}
For every $i \in \{1,\ldots, N\}$, the function $t \mapsto x_i(t)$ is differentiable, with 
\begin{equation}  
\label{e.evol.x}
\partial_t x_i(t) = v(t,x_i(t)),
\end{equation}
and setting $v_i(t) \coloneqq v(t,x_i(t))$, we have
\begin{equation*}  
\left|G_{N,\eps}(\{(x_i, v_i)\}) - G_\eps(\rho,v)\right| \le \frac{5}{\sqrt{N}} \left(\|v\|_{L^\infty}^2 + \eps^{-2} \right) + (1+4\eps^{-1}) \omega(2MN^{-\frac 1 2}),
\end{equation*}
where $M \in \bbR$ is any number such that 
\begin{equation}  
\label{e.supp.M.bound}
\bigcup_{t \in [0,1]} \supp \rho(t,\cdot) \subseteq [-M,M],
\end{equation}
and $\omega$ is any function such that for every $t \in [0,1]$ and $x,y \in \bbR$,
\begin{equation}  
\label{e.modulus.v.rho} 
|v(t,y) - v(t,x)| + |\rho(t,x) - \rho(t,y)| \le \omega(|y-x|).
\end{equation}
\end{proposition}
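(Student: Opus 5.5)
The plan is to identify the quantile trajectories with flow lines of $v$, and then to treat both terms of $G_{N,\eps}$ as Riemann sums of the corresponding integrals in $G_\eps$ over the quantile cells.

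\textbf{Step 1: quantiles follow the flow.} By \eqref{e.rhot.explicit}, $\rho(t,\cdot)\,\rd x$ is the push-forward of $\mu_0$ by $\phi_t$. Since $\phi_t$ is an increasing $C^1$-diffeomorphism, $\int_{-\infty}^{\phi_t(y)}\rho(t,\cdot)=\int_{-\infty}^{y}\rho(0,\cdot)$ for all $y$. Taking $y=x_i(0)$ gives $x_i(t)=\phi_t(x_i(0))$. Uniqueness of $x_i(t)$ is not an issue: with $\rho(0,\cdot)$ continuous and $\partial_x\phi_t^{-1}>0$, the quantile is unique wherever the CDF is strictly increasing, and the flow representation selects it anyway. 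Hence $t\mapsto x_i(t)$ is $C^1$ and satisfies \eqref{e.evol.x} by \eqref{e.def.phitx}. Formula \eqref{e.rhot.explicit} also shows that $\rho(t,\cdot)$ is continuous, which is consistent with assuming the modulus $\omega$.

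\textbf{Step 2: cell decomposition.} Fix $t$ and set $\Delta_i \coloneqq x_{i+1}(t)-x_i(t)$ for $1\le i\le N-1$. Each cell $C_i=[x_i,x_{i+1}]$ carries $\rho$-mass exactly $1/(N+1)$, and the two tails $(-\infty,x_1]$ and $[x_N,\infty)$ also carry mass $1/(N+1)$ each. All cells lie in $[-M,M]$ by \eqref{e.supp.M.bound}, so $\sum_i \Delta_i\le 2M$. Call a cell \emph{long} if $\Delta_i>2MN^{-1/2}$; there are then at most $\sqrt N$ long cells.

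\textbf{Step 3: velocity term.} Write $\frac12\int v^2\rho$ as a sum over cells and tails, and compare each with $\frac{1}{2N}v(t,x_i)^2$.
\begin{itemize}
\item On a short cell, $|v(y)^2-v(x_i)^2|\le 2\|v\|_{L^\infty}\,\omega(2MN^{-1/2})$, which after summing gives $O(\|v\|_{L^\infty}\,\omega)\le O(1+\|v\|^2_{L^\infty})$-weighted contributions of the form $\omega(2MN^{-1/2})$.
\item The at most $\sqrt N$ long cells contribute at most $\|v\|_{L^\infty}^2\sqrt N/(N+1)$.
\item The two tails and the mismatch between the weights $1/N$ and $1/(N+1)$ contribute $O(\|v\|_{L^\infty}^2/N)$.
\end{itemize}

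\textbf{Step 4: density term.} Set $h(r)\coloneqq r/(1+\eps r)$, which is $1$-Lipschitz and bounded by $\eps^{-1}$, and let $\bar\rho_i\coloneqq 1/((N+1)\Delta_i)$ be the average of $\rho$ on $C_i$. A direct computation shows that the $i$-th discrete term equals
\begin{equation*}
\frac{\pi^2}{6N(N+1)^2}\Big(\Delta_i+\tfrac{\eps}{N+1}\Big)^{-2}=\frac{\pi^2}{6}\cdot\frac{N+1}{N}\cdot\frac{1}{N+1}\,h(\bar\rho_i)^2 .
\end{equation*}
The continuous counterpart is $\frac{\pi^2}{6}\int_{C_i}h(\rho)^2\rho$, where the cell has mass $1/(N+1)$.
\begin{itemize}
\item On a short cell, continuity of $\rho$ gives some point $y_0\in C_i$ with $\rho(y_0)=\bar\rho_i$, so $|\rho(y)-\bar\rho_i|\le\omega(2MN^{-1/2})$ on $C_i$. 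Hence $|h(\rho)^2-h(\bar\rho_i)^2|\le 2\eps^{-1}\omega(2MN^{-1/2})$.
\item Long cells contribute at most $\eps^{-2}\sqrt N/(N+1)$.
\item The term $i=N$ vanishes in the discrete sum because $x_{N+1}=+\infty$, while the two continuous tails contribute at most $2\eps^{-2}/(N+1)$.
\end{itemize}
Summing over cells, integrating in $t\in[0,1]$, and collecting constants yields the bound stated in Proposition~\ref{p.discretize}.

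\textbf{Main obstacle.} The delicate point is Step~4. The $\rho^3$ term must be compared using only a modulus of continuity of $\rho$ and no lower bound on $\rho$. This is exactly where the cap $h\le\eps^{-1}$ and the long-cell/short-cell counting become essential. Without the cap, cells where $\rho$ is small, which are the long cells, could not be controlled, and cells where $\rho$ is large would make $h$ non-Lipschitz in the relevant sense. The rest is bookkeeping of constants.
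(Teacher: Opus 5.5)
Your argument is essentially the paper's proof. Quantiles are transported by the flow, $x_i(t)=\phi_t(x_i(0))$. The mean-value point $y_i(t)$ with $\rho(t,y_i(t))=\bar\rho_i$ turns the $i$-th discrete density term into $\frac{\pi^2}{6N}h(\rho(t,y_i(t)))^2$. At most $\sqrt N$ cells are longer than $2MN^{-1/2}$, and on short cells you use $\omega$ together with $h\le\eps^{-1}$ and $h$ being $1$-Lipschitz.

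The step that fails is the last one, ``collecting constants yields the bound stated''. Your own short-cell estimate in Step~3, $|v(t,y)^2-v(t,x_i)^2|\le 2\|v\|_{L^\infty}\omega(2MN^{-1/2})$, is correct. It gives a total velocity error of order $\|v\|_{L^\infty}\,\omega(2MN^{-1/2})$. Calling this ``$O(1+\|v\|_{L^\infty}^2)$-weighted'' does not produce the coefficient $1$ in $(1+4\eps^{-1})\,\omega(2MN^{-1/2})$. The factor $\|v\|_{L^\infty}$ also cannot be absorbed into $\frac{5}{\sqrt N}\|v\|_{L^\infty}^2$.

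This cannot be repaired, because the inequality as displayed is false in general. Here is a counterexample.
\begin{itemize}
\item Take a smooth probability density $\rho_0$ supported in $[-1,1]$ with distribution function $F_0$. Put $w(x)\coloneqq\sin(\pi(N+1)F_0(x))$, which is smooth and vanishes outside $(-1,1)$ and at every quantile $x_i(0)$.
\item Let $v(t,x)\coloneqq A\cos(2\pi Kt)\,w(x)$ with $K$ a positive integer. Then $v\in\reg$, $M=1$ is admissible, $x_i(t)\equiv x_i(0)$ and $v_i\equiv 0$, so the discrete kinetic term vanishes.
\item Integrating by parts in time in $\phi_t(y)-y=\int_0^t v(s,\phi_s(y))\,\rd s$ and in \eqref{e.partial.phit} shows $\phi_t\to\mathrm{id}$ in $C^1$ as $K\to\infty$ at fixed $(A,N)$. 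Hence $\rho(t,\cdot)\to\rho_0$ uniformly, and $\frac12\int_0^1\int v^2\rho\to\frac{A^2}{4}\int w^2\rho_0=\frac{A^2}{8}$.
\item Both density terms lie in $[0,\frac{\pi^2}{6}\eps^{-2}]$. The natural nondecreasing modulus satisfies $\omega\le 2A+4\|\rho_0\|_{L^\infty}$ for large $K$.
\item The left-hand side is therefore at least $\frac{A^2}{8}-\frac{\pi^2}{6}\eps^{-2}-o_K(1)$. The right-hand side is at most $\frac{5}{\sqrt N}(A^2+\eps^{-2})+(1+4\eps^{-1})(2A+4\|\rho_0\|_{L^\infty})$. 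This fails for $N>1600$ and $A$ large.
\end{itemize}

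The paper's own proof has the same slip. It bounds $|v(t,x_i(t))-(N+1)\int_{x_i(t)}^{x_{i+1}(t)}v\rho|$ by $\omega$, then replaces $v(t,x_i(t))^2$ by the cell average of $v^2$ ``up to these error terms''.

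What your argument actually proves is the bound with $(\|v\|_{L^\infty}+4\eps^{-1})\,\omega(2MN^{-1/2})$ in place of $(1+4\eps^{-1})\,\omega(2MN^{-1/2})$. This requires $\omega$ to be nondecreasing, which both you and the paper use tacitly when passing from $|x-y|\le 2MN^{-1/2}$ to $\omega(2MN^{-1/2})$. The corrected bound is all that Proposition~\ref{p.one.side} needs, since there only $\mathrm{err}(N,\delta,\eta)\to0$ as $N\to\infty$ at fixed $(\delta,\eta)$ is used. You should state and prove that version rather than claim the displayed constant.
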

\begin{remark}  
For $M_0$ such that $\supp \rho(0,\cdot) \subseteq [-M_0,M_0]$, choosing
\begin{equation*}  
M \coloneqq M_0 + \|\partial_x v\|_{L^{\infty}([0,1]\times \bbR)}
\end{equation*}
ensures that \eqref{e.supp.M.bound} holds. Similarly, using  \eqref{e.partial.phit} and \eqref{e.rhot.explicit}, we can choose~$\omega$ in terms of $\|\partial_x v\|_{L^\infty([0,1]\times \bbR)}$ and a modulus of continuity of $\partial_x v$ and $\rho(0, \cdot)$ only to ensure that~\eqref{e.modulus.v.rho} holds, while making sure that $\lim_{r \to 0} \omega (r) = 0$.
\end{remark}
\begin{proof}[Proof of Proposition~\ref{p.discretize}]
Writing 
\begin{equation*}  
f(t,x) \coloneqq \int_{-\infty}^x \rho(t,u) \rd u,
\end{equation*}
we see from the continuity equation \eqref{eq:continuity} that
\begin{equation*}  
\partial_t f + v \partial_x f = 0. 
\end{equation*}
In particular, for each $x \in \bbR$, the function $t \mapsto f(t,\phi_t(x))$ is constant, where $(\phi_t(x))$ is as in~\eqref{e.def.phitx}. 
Using this observation with $x$ replaced by $x_i(0)$ yields~\eqref{e.evol.x}. 
By the intermediate-value theorem, for each $t \in [0,1]$ and $i \in \{1,\ldots, N-1\}$, there exists $y_i(t) \in [x_i(t), x_{i+1}(t)]$ such that 
\begin{equation*}  
\frac 1 {N+1} = f(t,x_{i+1}(t)) - f(t,x_i(t)) = \rho(t,y_i(t)) (x_{i+1}(t) - x_i(t)).
\end{equation*}
We can thus write
\begin{equation*}  
G_{N,\eps}(\{(x_i, v_i)\}) = \frac 1 {2N} \sum_{i = 1}^N \int_0^1 v(t,x_i(t))^2  \rd t + \frac {\pi^2} {6N} \sum_{i = 1}^{N-1} \int_0^1 \left( \frac{ \rho(t,y_i(t))}{1 + \eps \rho (t,y_i(t))} \right)^2  \, \rd t.
\end{equation*}
For each $t \in [0,1]$, we clearly have that $x_i(t) \in [-M,M]$, so there must be fewer than $\sqrt{N}$ indices $i \in \{1,\ldots, N-1\}$ with $x_{i+1}(t) - x_i(t) \ge 2MN^{-\frac 1 2}$. Denoting by $\mcI(t)$ the complementary set of indices in $\{1,\ldots, N-1\}$, we thus have that
\begin{align*}
&\left| G_{N,\eps}(\{(x_i, v_i)\}) - \frac 1 {2N} \int_0^1  \sum_{i \in \mcI(t)}\left(v(t,x_i(t))^2   +  \frac{\pi^2}{3} \left( \frac{ \rho(t,y_i(t))}{1 + \eps \rho (t,y_i(t))} \right)^2 \right) \, \rd t\right| \\
&\le \frac{1}{2\sqrt{N}} \left(\|v\|_{L^\infty}^2 + \frac{\pi^2}{3}\eps^{-2} \right).
\end{align*}
Recalling that $\int_{x_i(t)}^{x_{i+1}(t)} \rho(t,\cdot) = (N+1)^{-1}$, we can write for $i \in \mcI(t)$ that
\begin{equation*}  
\left| v(t,x_i(t)) - (N+1)\int_{x_i(t)}^{x_{i+1}(t)} v(t,x) \rho(t,x) \, \rd x\right| \le \omega(2MN^{-\frac 1 2}),
\end{equation*}
and likewise,
\begin{equation*}  
\left| \left( \frac{ \rho(t,y_i(t))}{1 + \eps \rho (t,y_i(t))} \right)^2 - (N+1)\int_{x_i(t)}^{x_{i+1}(t)} \left( \frac{ \rho(t,x)}{1 + \eps \rho (t,x)} \right)^2 \rho(t,x) \, \rd x\right| 
\le 2\eps^{-1} \omega(2MN^{-\frac 1 2}).
\end{equation*}
Up to these error terms, we can therefore replace the quantity
\begin{equation*}  
\frac 1 {2N}  \int_0^1 \sum_{i \in \mcI(t)}\left(v(t,x_i(t))^2   + \frac{\pi^2}{3} \left( \frac{ \rho(t,y_i(t))}{1 + \eps \rho (t,y_i(t))} \right)^2 \right) \, \rd t
\end{equation*}
by
\begin{equation*}  
\frac {N+1} {2N} \int_0^1 \sum_{i \in \mcI(t)} \int_{x_i(t)}^{x_{i+1}(t)} \left(v(t,x)^2   +  \frac{\pi^2}{3}\left( \frac{ \rho(t,x)}{1 + \eps \rho (t,x)} \right)^2 \right) \rho(t,x)\, \rd x \, \rd t.
\end{equation*}
The difference between this quantity and $G_{\eps}(\rho,v)$ is at most
\begin{equation*}  
\left(\frac{1}{2N} + \frac{1}{2\sqrt{N}}\right) \left(\|v\|_{L^\infty}^2 + \frac{\pi^2}{3}\eps^{-2} \right),
\end{equation*}
so the proof is complete.
\end{proof}

\myskip
As discussed earlier, the proof of Proposition~\ref{p.one.side} requires a smoothing operation on $(\rho,v)$. The smoothing causes a discrepancy on the initial and final measures $\rho(0,x) \, \rd x$ and $\rho(1,x) \, \rd x$. To avoid this, we will squeeze the convolved objects onto the time interval $[\eta,1-\eta]$ for some small $\eta > 0$, and use the next lemma to bridge time $0$ to time $\eta$ and time $1-\eta$ to time $1$.
\begin{lemma}
\label{l.bridging}
Let $\eta \in (0,1]$, and let $\mu_0 = \rho(0,x) \, \rd x, \mu_\eta = \rho(\eta,x) \, \rd x \in \mcP_c(\bbR)$ with $\rho(0,\cdot), \rho(\eta,\cdot) \in L^3(\bbR)$. 
We let $(x_i(0))_{ 1 \le i \le N}$ and $(x_i(\eta))_{1 \le i \le N}$ be defined as in \eqref{eq:def_xi_01}, except with time $1$ replaced by time $\eta$. For each $i \in \{1,\ldots, N\}$, we denote by $(x_i(t))_{t \in [0,\eta]}$ the linear interpolation from $x_i(0)$ to $x_i(\eta)$, and we set $v_i(t) \coloneqq \eta^{-1} (x_i(\eta) - x_i(0))$. We have
\begin{multline*}
\limsup_{N \to +\infty} \frac {1}{2N} \sum_{i = 1}^N \int_0^\eta \left( v_i(t)^2 + \frac{\pi^2}{3(N+1)^2} \left( x_{i+1}(t) - x_i(t)  \right) ^{-2}  \right) \rd t 
\\
\le \frac 1 {2\eta} W_2(\mu_0, \mu_\eta)^2 + \frac{\pi^2\,\eta}{12}\int(\rho(0,x)^3 + \rho(\eta,x)^3)\, \rd x,
\end{multline*}
where $W_2(\mu_0, \mu_\eta)$ denotes the $L^2$-Wasserstein distance between $\mu_0$ and $\mu_\eta$. 
\end{lemma}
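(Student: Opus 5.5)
The plan is to treat the kinetic and interaction terms separately and bound each in the limit $N \to \infty$.

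\emph{Kinetic term.} Since $v_i$ is constant on $[0,\eta]$,
\begin{equation*}
\frac{1}{2N}\sum_{i=1}^N \int_0^\eta v_i^2 \, \rd t = \frac{1}{2\eta}\cdot\frac1N\sum_{i=1}^N (x_i(\eta)-x_i(0))^2 .
\end{equation*}
Write $F_0^{-1},F_\eta^{-1}$ for the quantile functions. Then $x_i(s)=F_s^{-1}(i/(N+1))$, and the sum is a Riemann sum for
\begin{equation*}
\int_0^1 \bigl(F_\eta^{-1}(u)-F_0^{-1}(u)\bigr)^2\,\rd u = W_2(\mu_0,\mu_\eta)^2 ,
\end{equation*}
using the one-dimensional quantile representation of $W_2$. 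Both measures are compactly supported, so the integrand is bounded. It is also monotone-difference regular enough (a difference of two nondecreasing functions) for the Riemann sums to converge. To see this, bound it on each cell $[i/(N+1),(i+1)/(N+1)]$ by its values at the endpoints; the total variation of quantile functions on a compact support controls the error by $O(M/N)\cdot M$. This gives the first term.

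\emph{Interaction term.} Linear interpolation of both endpoints gives
\begin{equation*}
x_{i+1}(t)-x_i(t) = (1-s)a_i + s b_i, \qquad s=t/\eta,
\end{equation*}
with $a_i = x_{i+1}(0)-x_i(0)>0$ and $b_i = x_{i+1}(\eta)-x_i(\eta)>0$. The map $y\mapsto y^{-2}$ is convex, so $((1-s)a+sb)^{-2}\le (1-s)a^{-2}+sb^{-2}$, and integrating in $t$ gives
\begin{equation*}
\int_0^\eta ((1-s)a_i+sb_i)^{-2}\,\rd t \le \frac{\eta}{2}(a_i^{-2}+b_i^{-2}).
\end{equation*}
(Exact integration gives $\eta/(a_ib_i)$, which is also bounded by this by AM–GM.) The term $i=N$ vanishes by the convention $x_{N+1}=+\infty$. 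It then remains to show, for $\mu=\rho\,\rd x\in\mcP_c$ with $\rho\in L^3$, that
\begin{equation*}
\limsup_{N\to\infty}\frac{1}{N(N+1)^2}\sum_{i=1}^{N-1}(x_{i+1}-x_i)^{-2}\le \int\rho^3 .
\end{equation*}
Multiplying by the prefactor $\pi^2/(6)\cdot \eta/2$ then yields $\frac{\pi^2\eta}{12}\int(\rho(0)^3+\rho(\eta)^3)$.

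\emph{Main step: the quantile-spacing bound.} This is the step I expect to be the main obstacle. Each cell $[x_i,x_{i+1}]$ carries mass $1/(N+1)$. Applying Jensen (or Hölder) on the cell with the probability measure $(N+1)\rho\,\rd x$,
\begin{equation*}
\frac{1}{(N+1)(x_{i+1}-x_i)} = \frac{1}{x_{i+1}-x_i}\int_{x_i}^{x_{i+1}}\rho\,\rd x = \text{average of } \rho \text{ over the cell}.
\end{equation*}
Therefore
\begin{equation*}
\Bigl(\frac{1}{(N+1)(x_{i+1}-x_i)}\Bigr)^2 \le \frac{1}{x_{i+1}-x_i}\int_{x_i}^{x_{i+1}}\rho^2 \,\rd x \le (N+1)\int_{x_i}^{x_{i+1}}\rho^3\,\rd x .
\end{equation*}
The last inequality is Hölder applied to the probability measure $(N+1)\rho\,\rd x$ on the cell: the mean of $\rho$ squared is at most the mean of $\rho^2$. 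Summing over $i$ gives
\begin{equation*}
\frac{1}{N(N+1)^2}\sum_i (x_{i+1}-x_i)^{-2} \le \frac{N+1}{N}\int\rho^3 ,
\end{equation*}
which proves the bound with no limit argument even needed. This also removes any concern about small gaps: cells where $\rho$ is large are exactly those charged by $\int\rho^3$. The only remaining care is that $x_i$ is well defined and strictly increasing, which follows from absolute continuity, possibly after choosing the quantile on flat parts of $F$.
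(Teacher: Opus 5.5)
Your proposal is correct and follows the paper's proof essentially step for step: Riemann sums of the bounded-variation function $(F_\eta^{-1}-F_0^{-1})^2$ for the kinetic term, Jensen for $z\mapsto z^{-2}$ along the affine interpolation, and a per-cell H\"older bound summed over $i$. One small point of justification: your middle inequality $\frac{1}{x_{i+1}-x_i}\int_{x_i}^{x_{i+1}}\rho^2\le (N+1)\int_{x_i}^{x_{i+1}}\rho^3$ does not follow from Jensen under $(N+1)\rho\,\rd x$ alone (it needs Cauchy--Schwarz once more), whereas the paper gets the whole per-cell bound in one step from $\bigl(\int_{x_i}^{x_{i+1}}\rho\bigr)^3\le (x_{i+1}-x_i)^2\int_{x_i}^{x_{i+1}}\rho^3$.
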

\begin{proof}[Proof of Lemma~\ref{l.bridging}]
Since $v_i$ is constant on $[0,\eta]$, we can write
\begin{equation}
\label{e.estim.vpart}
\frac{1}{2N}\sum_{i=1}^N\int_0^\eta v_i(t)^2\,\rd t
=\frac{\eta}{2N}\sum_{i=1}^N \left(\frac{x_i(\eta)-x_i(0)}{\eta}\right)^2
=\frac{1}{2\eta}\,\frac1N\sum_{i=1}^N \left(x_i(\eta)-x_i(0)\right)^2.
\end{equation}
By construction $x_i(0)=F_0^{-1}\!\left(\frac{i}{N+1}\right)$ and $x_i(\eta)=F_\eta^{-1}\!\left(\frac{i}{N+1}\right)$, where $F_0^{-1}$ and $F_\eta^{-1}$ denote the inverse cumulative distribution functions of $\mu_0$ and $\mu_\eta$ respectively. 
Since $\mu_0$ and $\mu_\eta$ have compact support, these functions are bounded and, since they are monotone, they are of bounded variation. As a result, the function
\begin{equation*}  
u\mapsto\Big(F_\eta^{-1}(u)-F_0^{-1}(u)\Big)^2
\end{equation*}
is also of bounded variation. It is therefore Riemann-integrable (see e.g.\ \cite[Corollary 14.3]{carothers}), and thus
\begin{equation}  
\label{e.kinetic.W2}
\lim_{N \to \infty} \frac 1 N \sum_{i=1}^N \left(x_i(\eta)-x_i(0)\right)^2  = \int_0^1 \Big(F_\eta^{-1}(u)-F_0^{-1}(u)\Big)^2 \, \rd u.
\end{equation}
The right-hand side of this expression is $W_2(\mu_0, \mu_\eta)^2$ (see e.g.\ \cite[Chapter~2]{villani2003topics}).

\myskip
For the second term in the integral to be estimated, we first observe that, since the mapping $t \mapsto x_{i+1}(t) - x_i(t)$ is affine and $z \mapsto z^{-2}$ is convex on $(0,+\infty)$, we have by Jensen's inequality that 
\begin{equation*}  
(x_{i+1}(t) - x_i(t))^{-2} \le \left( 1 - \frac{t}{\eta} \right) (x_{i+1}(0) - x_i(0))^{-2} + \frac{t}{\eta} (x_{i+1}(\eta) - x_i(\eta))^{-2}.
\end{equation*}
Integrating in time, we thus obtain that
\begin{equation}  
\label{e.rho3.jensen}
\int_0^{\eta}(x_{i+1}(t) - x_i(t))^{-2} \, \rd t \le \frac{\eta}{2} (x_{i+1}(0) - x_i(0))^{-2} + \frac{\eta}{2} (x_{i+1}(\eta) - x_i(\eta))^{-2}.
\end{equation}
We now proceed to estimate $\sum_{i} (x_{i+1}(0) - x_i(0))^{-2}$; the second term on the right side of the previous display can be estimated in the same way. By H\"older's inequality, we have
\[
\left( \frac 1 {N+1} \right) ^3
= \left(\int_{x_i(0)}^{x_{i+1}(0)} \rho(0,x) \,\rd x\right)^3
\le \big(x_{i+1}(0)-x_i(0)\big)^2 \int_{x_i(0)}^{x_{i+1}(0)} \rho(0,x)^3 \,\rd x,
\]
that is,
\begin{equation*}
\frac{1}{(N+1)^2}\,\big(x_{i+1}(0)-x_i(0)\big)^{-2}
\le (N+1)\int_{x_i(0)}^{x_{i+1}(0)} \rho(0,x)^3 \,\rd x.
\end{equation*}
Summing this inequality over $i$ and dividing by $N$ gives
\[
\frac{1}{N(N+1)^2}\sum_{i=1}^N \big(x_{i+1}(0)-x_i(0)\big)^{-2}
\le \frac{N+1}{N}\int_\bbR \rho(0,x)^3 \,\rd x.
\]
Combining this with \eqref{e.rho3.jensen} yields that
\[
\frac{1}{2N}\sum_{i=1}^N \int_0^\eta \frac{\pi^2}{3(N+1)^2}\big(x_{i+1}(t)-x_i(t)\big)^{-2}\,\rd t
\le \frac{\pi^2\eta}{12}\,\frac{N+1}{N}\int(\rho(0,x)^3 + \rho(\eta,x)^3)\, \rd x.
\]
Taking the limsup as $N \to \infty$, and recalling eq.~\eqref{e.kinetic.W2}, we obtain the announced result.
\end{proof}

\subsubsection{Proof of Proposition~\ref{p.one.side}}

By Theorem~\ref{t.exist.min}, there exists $(\rho,v) \in \adm(\mu_0,\mu_1)$ with $\rho$ compactly supported such that $G(\rho,v) = J(\mu_0,\mu_1)$. 
We let $M \ge 1$ be such that for all $t \in [0,1]$, the support of $\rho(t,\cdot)$ 
is contained in $[-(M-1),M-1]$. We extend $\rho$ and~$v$ to all times $t \in \bbR$ by setting $v(t,x) = 0$ for $t \notin [-1,1]$ and 
\begin{equation*}  
\rho(t,x) = 
\begin{cases}  
\rho(0,x) & \text{ if } t < 0,\\
\rho(1,x) & \text{ if } t > 1.
\end{cases}
\end{equation*}
The extended $(\rho,v)$ still satisfies the continuity equation~\eqref{eq:continuity} in the weak sense, as does the pair $(\indi_{\left[-M,M\right]}(x)/(2M), 0)$. We write $m \coloneqq \rho v$, fix $\delta_1 \in (0,\frac 1 4]$, and set
\begin{equation*}  
\rho_{\delta_1}(t,x) \coloneqq (1-\delta_1) \rho(t,x) + \frac{\delta_1}{2M} \indi_{\left[-M,M\right]}(x) \qquad \text{ and } \qquad m_{\delta_1} \coloneqq (1-\delta_1) m.
\end{equation*}
Using the convexity of the mapping in eq.~\eqref{e.convex.x2y}, as well as that of $r \mapsto r^3$ over $[0,+\infty)$, we have
\begin{equation*}  
\frac 1 2 \int_0^1 \int_\bbR \left( \frac{m_{\delta_1}(t,x)^2}{\rho_{\delta_1}(t,x)} + \frac{\pi^2}{3} \rho_{\delta_1}(t,x)^3 \right) \, \rd x \, \rd t \le (1-\delta_1) G(\rho,v) + \frac{\pi^2}{6}\frac{\delta_1}{(2M)^2}. 
\end{equation*}
We give ourselves $\delta_2 \in ( 0, \frac 1 4]$ and a nonnegative even function $\varphi \in C^\infty_c(\bbR)$ supported in $[-1,1]$ with $\int \varphi = 1$, and set
\begin{equation*}  
\varphi_{\delta_2} (t,x) \coloneqq \delta_2^{-2} \varphi(t/\delta_2) \varphi(x/\delta_2), \qquad \rho_{\delta} \coloneqq \rho_{\delta_1} \star \varphi_{\delta_2}, \quad \text{ and } \quad m_{\delta} \coloneqq m_{\delta_1} \star \varphi_{\delta_2},
\end{equation*}
where $\star$ denotes the convolution in the time and space variables, and where we write $\delta = (\delta_1, \delta_2)$. By linearity, the pair $(\rho_{\delta}, m_{\delta})$ still satisfies the continuity equation (in the sense that $\partial_t\rho_{\delta}+ \partial_x m_{\delta} = 0$). Notice also that $\rho_\delta$ is constant on $(-\infty,-\delta_2]$, as well as on $[1+\delta_2, +\infty)$. For convenience, we introduce the shorthand 
\begin{equation*}  
{\mu}_{0,\delta} \coloneqq \rho_\delta(-\delta_2, x) \, \rd x, \quad {\mu}_{1,\delta} \coloneqq \rho_\delta(1+\delta_2, x) \, \rd x \quad \in \mcP_c(\bbR).
\end{equation*}
The function $\rho_{\delta}(-\delta_2,\cdot)$ is the convolution (in space only) of $\rho_{\delta_1}(0,\cdot)$ with $\delta_2^{-1} \varphi(\, \cdot /\delta_2)$; and similarly for $\rho_{\delta}(1+\delta_2,\cdot)$. 
In particular, we have that 
\begin{equation}
\label{e.W2.convergence}
\lim_{\delta \to 0} W_2(\mu_0, \mu_{0,\delta}) = \lim_{\delta \to 0} W_2(\mu_1, \mu_{1,\delta})  = 0.
\end{equation}
We give ourselves $\eta \in [\delta_2,\frac 1 4]$, and use convexity to write that
\begin{align}  
\label{e.m.delta.bound}
& \frac 1 2 \int_{-\eta}^{1+\eta} \int_\bbR \left( \frac{m_{\delta}(t,x)^2}{\rho_{\delta}(t,x)} + \frac{\pi^2}{3} \rho_{\delta}(t,x)^3 \right) \, \rd x \, \rd t
\\
\notag
& \qquad \le
\frac 1 2 \int_{-\eta}^{1+\eta} \int_\bbR \left( \frac{m_{\delta_1}^2}{\rho_{\delta_1}} + \frac{\pi^2}{3} \rho_{\delta_1}^3 \right) \star \varphi_{\delta_2}(t,x) \, \rd x \, \rd t
\\
\notag
& \qquad \le
\frac 1 2 \int_{-\eta-\delta_2}^{1+\eta + \delta_2} \int_\bbR \left( \frac{m_{\delta_1}(t,x)^2}{\rho_{\delta_1}(t,x)} + \frac{\pi^2}{3} \rho_{\delta_1}(t,x)^3 \right) \, \rd x \, \rd t
\\
\notag
& \qquad   \le (1-\delta_1) G(\rho,v) + \frac{\pi^2}{6}\frac{\delta_1}{(2M)^2} (1+2\eta+2\delta_2)+ (\eta+\delta_2) \frac{\pi^2}{6} \int_\bbR (\rho(0,x)^3 + \rho(1,x)^3) \, \rd x. 
\end{align}
Since we imposed $\eta \ge \delta_2$, we have that 
\begin{equation}
\label{e.}
\rho_\delta(-\eta,x) \, \rd x = \mu_{0,\delta} \quad \text{and} \quad \rho_\delta(1+\eta,x) \, \rd x = \mu_{1,\delta}.
\end{equation}
The point of the parameter $\delta_1$ is to ensure a uniform lower bound on the density, namely that
\begin{equation}  
\label{e.rho.lb}
\mbox{for every $x \in [-M,M]$}, \ \ \rho_{\delta}(t,x) \ge \frac {\delta_1}{4M}.
\end{equation}
We set 
\begin{equation*}  
v_{\delta}(t,x) \coloneqq 
\begin{cases}  
\frac{m_{\delta}(t,x)}{\rho_{\delta}(t,x)} & \text{ if } \rho_{\delta}(t,x) > 0, \\
0 & \text{ otherwise}. 
\end{cases}
\end{equation*}
By construction, the pair $(\rho_{\delta}, v_{\delta})$ satisfies the continuity equation \eqref{eq:continuity}, with $v_{\delta} \in L^2([0,1]\times \bbR; \rho_{\delta}(t,x)  \rd x \rd t)$. Thanks to the smoothing introduced with the parameter $\delta_2$, the function $v_{\delta}$ is infinitely differentiable on the set $\{(t,x) : \rho_{\delta}(t,x) > 0\}$, and in particular on $[0,1]\times [-M,M]$ by~\eqref{e.rho.lb}. 
Recalling  that $\rho(t,\cdot)$ and $m(t,\cdot)$ are supported in $[-M+1,M-1]$, we also see that for every $|x| \ge M-\frac 1 2$, the quantity $\rho_{\delta}(t,x)$ does not depend on $t$ and $v_{\delta}(t,x) = 0$. 
In particular, we have that $v_{\delta} \in \reg$. 
We are thus in position to appeal to Proposition~\ref{p.discretize} for the pair $(\rho_\delta,v_{\delta})$. 
One problem though is that if we apply it as stated, this proposition will not give us trajectories that belong to $\adm(\mu_0,\mu_1)$, since the initial and final measures will not be quite right. To remedy this, we take the $(\rho_\delta, v_\delta)$ trajectory on the time interval $[-\eta,1+\eta]$ and shrink it to $[\eta, 1-\eta]$, and we then leverage Lemma~\ref{l.bridging} to bridge the gap from time $0$ to time $\eta$ and from time $1-\eta$ to time $1$. That is, we set
\begin{equation*}  
    \begin{dcases}
    \rho_{\delta,\eta}(t,x) &\coloneqq \rho_{\delta} \left( \frac{1+2\eta}{1-2\eta}\left( t - \frac 1 2 \right)+ \frac 1 2, x  \right), \\
    v_{\delta,\eta}(t,x) &\coloneqq \frac{1+2\eta}{1-2\eta} \, v_{\delta} \left( \frac{1+2\eta}{1-2\eta}\left( t - \frac 1 2 \right)+ \frac 1 2, x  \right).
    \end{dcases}
\end{equation*}
Notice that the pair $(\rho_{\delta,\eta}, v_{\delta,\eta})$ still satisfies the continuity equation. 
We have
\begin{multline}  
\label{e.expand.time}
\int_{\eta}^{1-\eta}  \int_\bbR \left( v_{\delta,\eta}(t,x)^2 + \frac{\pi^2}{3} \rho_{\delta,\eta}(t,x)^2  \right) \rho_{\delta,\eta}(t,x) \, \rd x \, \rd t 
\\
\le \frac{1+2\eta}{1-2\eta} \int_{-\eta}^{1+\eta} \int_\bbR \left( v_{\delta}(t,x)^2 + \frac{\pi^2}{3} \rho_{\delta}(t,x)^2  \right) \rho_{\delta}(t,x) \, \rd x \, \rd t ,
\end{multline}
and the latter integral has been estimated in \eqref{e.m.delta.bound}. 
Slightly adapting Proposition~\ref{p.discretize} so that the time interval $[0,1]$ there is replaced by $[\eta,1-\eta]$, 
we obtain trajectories $\{(x_{\delta,\eta,i}(t),v_{\delta,\eta,i}(t))\}$ defined for $t \in [\eta,1-\eta]$, with $\partial_t x_{\delta,\eta,i} = v_{\delta,\eta,i}$, such that for all $i \in \{1, \cdots, N\}$:
\begin{equation*}  
    \begin{dcases}
        \int_{-\infty}^{x_{\delta,\eta,i}(\eta)} \rd \mu_{0,\delta}(x) \, \rd x &= \frac {i}{N+1}, \\
        \int_{-\infty}^{x_{\delta,\eta,i}(1-\eta)} \rd \mu_{1,\delta}(x) \, \rd x &= \frac {i}{N+1},
    \end{dcases}
\end{equation*}
and furthermore
\begin{multline*}  
\bigg| \frac 1 {2N} \sum_{i = 1}^N \int_{\eta}^{1-\eta}\left( v_{\delta,\eta,i}(t)^2 + \frac{\pi^2}{3(N+1)^2} \left( x_{\delta,\eta,i+1}(t) - x_{\delta,\eta,i}(t) + \frac \eps {N+1} \right) ^{-2}  \right) \rd t \\
- \frac 1 2 \int_{\eta}^{1-\eta} \int_\bbR \left(v_{\delta,\eta}(t,x)^2 + \frac {\pi^2}{3} \left(\frac{\rho_{\delta,\eta}(t,x)}{1+\eps \rho_{\delta,\eta}(t,x)}\right)^2\right) \rho_{\delta,\eta}(t,x) \, \rd x \, \rd t \bigg| \le \mathrm{err}(N,\delta,\eta),
\end{multline*}
where $\mathrm{err}(N,\delta,\eta)$ is an error term such that, for each fixed $\delta,\eta$, we have $\lim_{N \to +\infty} \mathrm{err}(N,\delta,\eta) = 0$.
We next extend the trajectories $\{(x_{\delta,\eta,i}(t),v_{\delta,\eta,i}(t))\}$ to the full interval $t \in [0,1]$ using Lemma~\ref{l.bridging}\footnote{
Technically, this yields trajectories for which the velocities have jumps, while we required the velocities to be continuous in the definition of $\adm_N(\mu_0,\mu_1)$ (see Section~\ref{subsubsec:def_discretization}), but it is easy to see that the set $\adm_N(\mu_0,\mu_1)$ can be extended to include such trajectories without causing a change in the value of $J_{N,\eps}(\mu_0,\mu_1)$.}.
Recalling eq.~\eqref{e.W2.convergence}, as well as eqs.~\eqref{e.m.delta.bound} and \eqref{e.expand.time}, we send $N$ to infinity, then $\delta$ to zero, and finally $\eta$ to zero, to obtain that
\begin{equation*}  
\limsup_{N \to \infty} J_{N,\eps}(\mu_0,\mu_1) \le J(\mu_0,\mu_1),
\end{equation*}
as desired. \hfill
$\qed$

\subsection{Remark on an alternative formulation}\label{subsec:final_remark_proof}

We remark that, by taking a formal $N \to \infty$ limit, Theorem~\ref{thm:main} suggests the following possible alternative formulation for $J(\mu_0,\mu_1)$. For each $\mu \in \mcP(\bbR)$ and $\omega \in [0,1]$, let
\begin{equation*}  
F_{\mu}^{-1}(\omega) \coloneqq \inf \left\{ s \ge 0  \ : \  \mu((-\infty,s]) \ge \omega\right\} 
\end{equation*}
denote the inverse cumulative distribution function of $\mu$. Let $\adm'(\mu_0,\mu_1)$ denote the space of all continuous functions $Z \in C([0,1]^2 ;\bbR)$ that satisfy the following properties.
\begin{enumerate}  
\item We have $Z(0,\cdot) = F_{\mu_0}^{-1}$ and $Z(1,\cdot) = F_{\mu_1}^{-1}$.
\item For every $t \in [0,1]$, the function $Z(t,\cdot)$ is non-decreasing.
\item The function $Z$ is differentiable on $(0,1)^2$.
\end{enumerate}
We speculate that for every $\mu_0,\mu_1 \in \mcP_c(\bbR)$ with $\Sigma(\mu_0)$ and $\Sigma(\mu_1)$ finite, the quantity $J(\mu_0,\mu_1)$ is equal to
\begin{equation}  
\label{e.new.var}
\inf_{Z \in \adm'(\mu_0,\mu_1)} \frac 1 2 \int_0^1 \int_0^1 \left((\partial_t Z(t,\omega))^2 + \frac{\pi^2}{3(\partial_\omega Z(t,\omega))^2}\right) \, \rd \omega \, \rd t,
\end{equation}
and also to
\begin{equation}  
\label{e.new.var.eps}
\lim_{\eps \to 0} \inf_{Z \in \adm'(\mu_0,\mu_1)} \frac 1 2 \int_0^1 \int_0^1 \left((\partial_t Z(t,\omega))^2 + \frac{\pi^2}{3(\eps + \partial_\omega Z(t,\omega))^2}\right) \, \rd \omega \, \rd t.
\end{equation}
Notice that, as in the discrete approximation, the set $\adm'(\mu_0,\mu_1)$ is convex and the functionals appearing inside the infima in eqs.~\eqref{e.new.var} and in \eqref{e.new.var.eps} are strictly convex in $Z$. 
Moreover, the quantity in eq.~\eqref{e.new.var.eps} is more directly related to the discretization we perform in Section~\ref{sec:numerics}.

\printbibliography

\newpage
\appendix 
\addtocontents{toc}{\protect\setcounter{tocdepth}{1}} %Only sections titles in the TOC for the appendix

\section{Additional properties and technicalities}\label{sec_app:properties}
\subsection{Scaling and invariance properties of the HCIZ integral}\label{subsec_app:rescaling_invariance}

We make here two simple remarks on properties of the HCIZ integral and the hydrodynamical system of Theorem~\ref{thm:limit_hciz}.

\subsubsection{Scaling parameter $\theta \geq 0$}

If we introduce the scaling parameter $\theta \geq 0$,
eq.~\eqref{eq:I_HCIZ_matytsin} becomes
\begin{align}\label{eq:I_HCIZ_wtheta}
    I_\HCIZ(\theta, \mu, \nu) &= - \frac{3}{4} - \frac{1}{2} \log \theta 
    + \frac{\theta}{2} \left[\int \mu(\rd x) x^2 + \int \nu(\rd x) x^2 \right]
    - \frac{1}{2} [\Sigma(\mu) + \Sigma(\nu)] 
    \\ \nonumber
    & - \frac{1}{2} \inf_{(\rho, v)} \int_0^1 \rd t \, \int \rd y \, \rho(t,y) \Big[v(t,y)^2 + \frac{\pi^2}{3} \rho(t,y)^2\Big],
\end{align}
where the conservation of mass equation of eq.~\eqref{eq:continuity} is unchanged, and the boundary conditions read:
\begin{equation}\label{eq:boundary_cond_wtheta}
    \lim_{t \to 0} \rho(t,y) \, \rd y = \sqrt{\theta} \# \mu \hspace{30pt} \textrm{and} \hspace{30pt}
    \lim_{t \to 1} \rho(t,y) \, \rd y = \sqrt{\theta} \# \nu.
\end{equation}
Notice that we can rescale variables as:
\begin{equation}
    \begin{dcases}
    \trho(u, x) \coloneqq \sqrt{\theta} \rho(\theta u, \sqrt{\theta} x) , \\
    \tv(u, x) \coloneqq \sqrt{\theta} v(\theta u, \sqrt{\theta} x),
    \end{dcases}
\end{equation}
which yields
\begin{align}\label{eq:rescaling_hciz}
    \int_0^1 \rd t \, \int \rd y \, \rho(t,y) \Big[v(t,y)^2 + \frac{\pi^2}{3} \rho(t,y)^2\Big] &= \int_0^{\theta^{-1}} \rd u \, \int \rd x \, \trho(u,x) \Big[\tv(u,x)^2 + \frac{\pi^2}{3} \trho(u,x)^2\Big],
\end{align}
The continuity equation~\eqref{eq:continuity} is still satisfied by $(\trho, \tv)$, and this rescaling changes the boundary conditions to:
\begin{equation}\label{eq:boundary_cond_rescaled}
    \lim_{u \to 0} \trho(u,x) \, \rd x = \mu \hspace{30pt} \textrm{and} \hspace{30pt}
    \lim_{u \to \theta^{-1}} \trho(u,x) \, \rd x = \nu.
\end{equation}
Informally, eq.~\eqref{eq:rescaling_hciz} allows accounting for $\theta$ by rescaling either the interpolation time or the space variable.

\subsubsection{Galilean transformation}

The equations above are also invariant under Galilean transformations. If $(\rho, v)$ satisfy eqs.~\eqref{eq:continuity} and~\eqref{eq:euler_2nd_equation}, then so do
\begin{align*}
    \begin{dcases}
        \trho(t, x) &\coloneqq \rho(t, x + \alpha t),  \\
        \tv(t, x) &\coloneqq v(t, x + \alpha t) - \alpha,
    \end{dcases}
\end{align*}
for any $\alpha \in \bbR$.
Plugging this relation into eq.~\eqref{eq:I_HCIZ_wtheta}, we recover that, if $\nu_\alpha(x) \coloneqq \nu(x - \alpha)$,
\begin{align*}
    I_\HCIZ(\theta, \mu,  \nu_\alpha) = I_\HCIZ(\theta, \mu,  \nu) + \alpha \, \theta \, \EE_{\mu}[X],
\end{align*}
which is immediate from the finite-dimensional expression of eq.~\eqref{eq:def_Id}.

\subsection{Remarks on time discretization}\label{subsec_app:time_discretization_scheme}

We briefly detail here why the discretization scheme in time that we use in eq.~\eqref{eq:def_GNDt_eps} creates a time discretization error that we expect to be of order $\mcO(\|\Dt\|_\infty^2) = \mcO(1/T^2)$, as validated by the numerical experiments in Appendix~\ref{sec_app:additional_numerics}.

\myskip
We consider a single trajectory $x \in \mcC^2([0,1], \bbR)$. Let $F$ be twice continuously
differentiable on a neighborhood of $x([0,1])$, then the trapezoidal rule yields
\begin{equation*}
    \left|\int_0^1 F(x(t))\,\rd t-\sum_{k=0}^T\omega_kF(x_k)\right|
    \leq C \|\Dt\|_\infty^2,
\end{equation*}
where the weights $\omega_k$ are defined in eq.~\eqref{eq:def_wk_Dt}, and $C > 0$ only depends on $\{\|x^{(p)}\|_\infty, \|F^{(p)}\|_\infty\}_{0 \leq p \leq 2}$.
The same convergence order holds for the discretized kinetic-energy term in eq.~\eqref{eq:def_GNeps}, as
shown by the following elementary statement.
\begin{lemma}\label{lemma:kinetic_discretization}
Let $x\in \mcC^2([0,1], \bbR)$ and let $0=t_0<t_1<\cdots<t_T=1$. For $k\in[T]$, set
$\Dt_k\coloneqq t_k-t_{k-1}$, $x_k\coloneqq x(t_k)$, and
\begin{equation*}
    v_k\coloneqq\frac{x_k-x_{k-1}}{\Dt_k}.
\end{equation*}
Then
\begin{equation*}
    0\leq \int_0^1|\partial_t x(t)|^2\,\rd t-\sum_{k=1}^T\Dt_kv_k^2
    \leq \frac{\|\Dt\|_\infty^2}{\pi^2}
    \int_0^1|\partial_t^2x(t)|^2\,\rd t.
\end{equation*}
In particular, as $\|\Dt\|_\infty \downarrow 0$, the discrete kinetic energy converges to its continuous counterpart
with error $\mcO(\|\Dt\|_\infty^2)$.
\end{lemma}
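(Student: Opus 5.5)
The plan is to work interval by interval and then sum. On each $[t_{k-1},t_k]$ the chord slope $v_k$ is the average of $\partial_t x$, i.e.\ $v_k = \Dt_k^{-1}\int_{t_{k-1}}^{t_k}\partial_t x(t)\,\rd t$. Writing $g\coloneqq \partial_t x$, the elementary variance identity gives
\begin{equation*}
\int_{t_{k-1}}^{t_k} g(t)^2\,\rd t - \Dt_k v_k^2 = \int_{t_{k-1}}^{t_k} \bigl(g(t)-v_k\bigr)^2\,\rd t .
\end{equation*}
This follows by expanding the square and using $\int_{t_{k-1}}^{t_k}(g-v_k)\,\rd t=0$. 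The right-hand side is nonnegative, which already yields the lower bound $0\le\cdots$ after summing over $k$; this is just Jensen/Cauchy--Schwarz.

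For the upper bound I would apply the sharp Poincaré--Wirtinger inequality on an interval of length $\ell$ for mean-zero functions. If $h\in H^1(a,a+\ell)$ with $\int h=0$, then
\begin{equation*}
\int_a^{a+\ell} h^2 \le \frac{\ell^2}{\pi^2}\int_a^{a+\ell} (h')^2 .
\end{equation*}
The constant $\ell^2/\pi^2$ comes from the first nonzero Neumann eigenvalue $(\pi/\ell)^2$ of $-\rd^2/\rd t^2$. To justify it concisely, I would expand $h$ in the cosine basis $\cos(j\pi(t-a)/\ell)$, $j\ge 0$. The mean-zero condition kills the $j=0$ mode, and Parseval applied to $h$ and to $h'$ (whose sine coefficients carry the factor $j\pi/\ell$) gives the inequality. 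Applying it with $h=g-v_k$, which is mean-zero and has $h'=\partial_t^2 x$ continuous since $x\in\mcC^2$, gives
\begin{equation*}
\int_{t_{k-1}}^{t_k}(g-v_k)^2\,\rd t \le \frac{\Dt_k^2}{\pi^2}\int_{t_{k-1}}^{t_k}|\partial_t^2 x|^2\,\rd t .
\end{equation*}
Bounding $\Dt_k\le\|\Dt\|_\infty$ and summing over $k\in[T]$ gives the stated inequality. The $\mcO(\|\Dt\|_\infty^2)$ conclusion is immediate, since $\int_0^1|\partial_t^2x|^2$ is finite for $x\in\mcC^2$.

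The only point needing care is the sharp constant $1/\pi^2$ in Wirtinger's inequality. I would either cite it as classical or include the short cosine-series argument; everything else is algebra.
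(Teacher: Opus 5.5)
Your proposal is correct and follows essentially the same route as the paper: the per-interval variance identity $\int_{I_k}(\partial_t x)^2 - \Dt_k v_k^2 = \int_{I_k}(\partial_t x - v_k)^2$ gives the lower bound, and the sharp Poincar\'e--Wirtinger inequality, proved via the cosine expansion and Parseval, gives the upper bound with constant $\Dt_k^2/\pi^2 \le \|\Dt\|_\infty^2/\pi^2$. Your remark that the derivative's coefficients are sine coefficients is correct. It makes explicit a step the paper's Parseval argument leaves implicit: integrating by parts against $\sin$ produces no boundary terms.
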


\begin{proof}[Proof of Lemma~\ref{lemma:kinetic_discretization}]
Denote $I_k\coloneqq(t_{k-1},t_k)$.
Expanding the square and
using that $\int_{I_k}\partial_t x\,\rd t=\Dt_k v_k$ yields
\begin{equation*}
    \int_{I_k}(\partial_t x(t)-v_k)^2\,\rd t
    =\int_{I_k}(\partial_t x(t))^2\,\rd t- \Dt_k v_k^2.
\end{equation*}
And summing over $k$:
\begin{equation}\label{eq:kinetic_variance_identity}
    \int_0^1|\partial_t x(t)|^2\,\rd t-\sum_{k=1}^T \Dt_kv_k^2
    =\sum_{k=1}^T\int_{I_k}(\partial_t x(t)-v_k)^2\,\rd t \geq 0.
\end{equation}
We next apply the one-dimensional Poincaré--Wirtinger inequality on each interval:
for any $f$ such that $f, \partial_t f \in L^2(I_k)$, with $\bar f \coloneqq \Dt_k^{-1} \int_{I_k} f(t) \rd t$, we have
\begin{equation}\label{eq:poincare_wirtinger_interval}
    \int_{I_k}(f-\bar f)^2\,\rd t
    \leq\frac{\Dt_k^2}{\pi^2}\int_{I_k}|\partial_t f|^2\,\rd t.
\end{equation}
Indeed, using the Fourier expansion in $L^2(I_k)$:
\begin{equation*}
    g(t) \coloneqq f(t) - \bar{f} = \sqrt{\frac{2}{\Dt_k}}\sum_{n\geq1}a_n\cos\left(\frac{n\pi (t-t_{k-1})}{\Dt_k}\right),
\end{equation*}
Parseval's identity then gives
\begin{equation*}
    \int_{I_k}g(t)^2\,\rd t=\sum_{n\geq 1}a_n^2,
    \qquad
    \int_{I_k}|\partial_t g(t)|^2\,\rd t
    =\sum_{n\geq 1}\left(\frac{n\pi}{\Delta t_k}\right)^2 a_n^2.
\end{equation*}
Since $n^2\geq 1$, eq.~\eqref{eq:poincare_wirtinger_interval} follows.
Applying it to eq.~\eqref{eq:kinetic_variance_identity} then yields immediately the result.
\end{proof}

\subsection{Derivation of eq.~\eqref{eq:G_NDt_3}}\label{subsec_app:eq_G_NDt_3}

We detail here the derivation of the identity
\begin{align}\label{eq:to_show_G_NDt_3}
\nonumber
   & \min_{(x_{1,k})_{1 \leq k \leq T-1}} 
    \left\{
    \frac {1}{N} \sum_{k=1}^T \frac{1}{\Delta t_k} \sum_{i = 1}^N
    \left[x_{1,k} - x_{1,k-1} + \frac{\Delta t_k}{N+1} \sum_{j=1}^{i-1} \rd \delta_{j,k}\right]^2\right\} \\ 
    &= [\ox(0) - \ox(1)]^2 + 
    \frac{1}{(N+1)^2} \sum_{k=1}^T \Delta t_k \, (\rd \delta_k^\T) L_{N-1}^{-1} (\rd \delta_k).
\end{align}
As mentioned in the main text, this is a simple convex quadratic minimization problem. 
Recall that $x_{i,k} \coloneqq x_{1, k} + (N+1)^{-1} \sum_{j=1}^{i-1} \delta_{j,k}$.
The stationarity condition over $(x_{1,k})$ gives for all $1 \leq k \leq T-1$,
\begin{equation*}
    \frac{1}{N} \sum_{i=1}^N \left[\frac{x_{i,k} - x_{i,k-1}}{\Dt_k} - \frac{x_{i,k+1} - x_{i,k}}{\Dt_{k+1}}\right] = 0.
\end{equation*}
Letting $\ox_k \coloneqq (1/N) \sum_{i=1}^N x_{i,k}$, this can be rewritten as
\begin{equation*}
    \frac{\ox_{k} - \ox_{k-1}}{\Dt_k} = c,
\end{equation*}
for some constant $c \in \bbR$ and all $k \in \{1, \cdots, T\}$.
Thus we have 
\begin{align*}
    \ox_k = \ox_0 + \sum_{l=1}^k \Dt_l \left(\frac{\ox_{l} - \ox_{l-1}}{\Dt_l}\right) = \ox_0 + c t_k,
\end{align*}
with $t_k \coloneqq \sum_{l=1}^k \Dt_l$; using the boundary conditions, we obtain
\begin{align}\label{eq:ox_k}
    \ox_k = (1-t_k) \ox_0 + t_k \ox_T. 
\end{align}
Notice that 
\begin{equation*}
    \ox_k = x_{1,k} + \frac{1}{N(N+1)} \sum_{i=1}^N \sum_{j=1}^{i-1} \delta_{j,k} = x_{1,k} + \frac{1}{N+1} \sum_{j=1}^{N-1} \left(1 - \frac{j}{N}\right)\delta_{j,k}.
\end{equation*}
Therefore, by eq.~\eqref{eq:ox_k}, the minimizer of eq.~\eqref{eq:to_show_G_NDt_3} satisfies 
\begin{align*}
    x_{1,k} = (1-t_k) \ox_0 + t_k \ox_T - \frac{1}{N+1} \sum_{j=1}^{N-1} \left(1 - \frac{j}{N}\right)\delta_{j,k}.
\end{align*}
Notice that this expression remains valid for the fixed boundary variables, i.e.\ for $k \in \{0, T\}$.
In particular, we have for any $1 \leq k \leq T$
\begin{align}
    \label{eq:dx_1_min}
    x_{1,k} - x_{1,k-1} = \Dt_k (\ox_T - \ox_0) - \frac{\Delta t_k}{N+1} \sum_{j=1}^{N-1} \left(1 - \frac{j}{N}\right)\rd \delta_{j,k}.
\end{align}
We now plug back eq.~\eqref{eq:dx_1_min} into eq.~\eqref{eq:to_show_G_NDt_3}:
\begin{align*}
     &\min_{(x_{1,k})_{1 \leq k \leq T-1}} 
    \left\{
    \frac {1}{N} \sum_{k=1}^T \frac{1}{\Delta t_k} \sum_{i = 1}^N
    \left[x_{1,k} - x_{1,k-1} + \frac{\Delta t_k}{N+1} \sum_{j=1}^{i-1}\rd \delta_{j,k}\right]^2\right\}
    \\ 
    &= 
    \frac {1}{N} \sum_{k=1}^T \Delta t_k \sum_{i = 1}^N
    \left[\ox_T - \ox_0 + \frac{1}{N+1} \sum_{j=1}^{N-1} \left(\indi\{j \leq i-1\} - 1 + \frac{j}{N} \right)\rd \delta_{j,k}\right]^2, \\ 
    &\aeq [\ox_T - \ox_0]^2 + \frac {1}{N} \sum_{k=1}^T \Delta t_k \sum_{i = 1}^N
    \left[\frac{1}{N+1} \sum_{j=1}^{N-1} \left(\indi\{j \leq i-1\} - 1 + \frac{j}{N} \right)\rd \delta_{j,k}\right]^2, \\
    &= [\ox_T - \ox_0]^2 + \frac {1}{(N+1)^2} \sum_{k=1}^T \Delta t_k \sum_{j,l=1}^{N-1} (\rd \delta_{j,k}) (\rd \delta_{l,k}) M_{jl},
\end{align*}
using in $(\rm a)$ that $\sum_{i=1}^N \left(\indi\{j \leq i-1\} - 1 + j/N \right) = 0$ for all $1 \leq j \leq N-1$, 
and with 
\begin{align*}
    M_{jl} &\coloneqq \left[\frac{1}{N} \sum_{i=1}^N \left(\indi\{j \leq i-1\} - 1 + \frac{j}{N} \right) \left(\indi\{l \leq i-1\} - 1 + \frac{l}{N} \right)\right], \\ 
    &= \frac{1}{N} \left[\min(j,l) - \frac{jl}{N}\right].
\end{align*}
This completes the derivation of eq.~\eqref{eq:to_show_G_NDt_3}, since one can easily check that $M = L_{N-1}^{-1}$, as defined in eq.~\eqref{eq:defs_G_NDt_3}.

\subsection{Minimization algorithm with a rescaling parameter $\theta$}\label{subsec_app:rescaling_algorithm}

Recall the discussion in Section~\ref{subsec:scaling_theta}.

\noindent
\textbf{Objective function --}
We let $t \coloneqq \theta u \in (0,1)$.
We still consider time steps $\Dt = (\Dt_k)_{k=1}^T$, with $\sum_{k=1}^T \Dt_k = 1$.
It is a straightforward computation to check that eq.~\eqref{eq:G_NDt_4} becomes, under rescaling:
\begin{align}\label{eq:G_NDt_theta}
    &G^\topth_{N, \Dt, \eps}(\delta) \\ 
    \nonumber
    &= \frac{\theta}{2} \left[\ox(0) - \ox(1) \right]^2 + \frac{\pi^2}{6\theta N} \sum_{k=1}^T \sum_{i=1}^{N-1} \omega_k \sigma_\eps(\delta_{i,k}) + \frac{\theta}{2 (N+1)^2} \sum_{k=1}^T \Delta t_k \, (\rd \delta_k^\T) L_{N-1}^{-1} (\rd \delta_k), 
\end{align}
The objective is still to minimize this functional over all $(\delta_{i,k})$ for $1 \leq i \leq N-1$ and $1 \leq k \leq T-1$, under the constraint $\delta_{i,k} \geq 0$ and the same boundary conditions as in eq.~\eqref{eq:boundary_delta_x1}.

\myskip 
\textbf{Time schedule --}
We use an argument similar to the one sketched in Section~\ref{subsec:time_schedule}. At time $u \in (0,\theta^{-1})$, one checks easily from eq.~\eqref{eq:dbb} and Appendix~\ref{subsec_app:rescaling_invariance} that $\rho(u,x) \, \rd x$ is the law of a free Brownian bridge
\begin{align}\label{eq:dbb_theta}
    X_u \coloneqq (1-t) X_0 + t X_1 + \sqrt{\frac{t (1-t)}{\theta}} S,
\end{align}
where $t \coloneqq \theta u \in (0,1)$, and recall that $X_0$ has distribution $\mu$, $X_1$ has distribution $\nu$, and $S$ is a semicircular variable, free with $(X_0, X_1)$.
We fix an even integer $T \geq 2$ denoting the number of discretization points, 
and we focus on the regime $t \in [0,1/2]$. We let $\tT \coloneqq T/2$.

\myskip
Eq.~\eqref{eq:dbb_theta} shows that the evolution of $X_u$ happens on two timescales: a timescale $t \sim \theta$, and a second timescale $t = \Theta(1)$. This is particularly important for $\theta \ll 1$, as we might miss relevant dynamics if we use a naive discretization with too few points close to $t = 0$.
Moreover, for $t \downarrow 0$, we have $X_u \simeq X_0 + \sqrt{t/\theta} S$.
This motivates a discretization of $t \in [0,1/2]$ as $t_k = g_\theta(k/\tT)$ for $k \in \{0, \cdots, \tT\}$,
with $g_\theta$ chosen such that:
\begin{enumerate}[nosep,label=$(\roman*)$,leftmargin=*]
    \item $g_\theta(0) = 0$, $g_\theta(1) = 1/2$, and $g_\theta$ is strictly increasing.
    \item $g_{\theta = 1}(v) = v^2/2$, to be consistent with the case $\theta = 1$ of Section~\ref{subsec:time_schedule}.
    \item $g_{\theta}(v) \sim \theta v^2/2$ for $v \downarrow 0$.
    \item As $\theta \downarrow 0$, $g_\theta(v) = \Theta(\theta)$ for each fixed $v < 1/2$ and $g_\theta(v) = \Theta(1)$ for each fixed $v > 1/2$.
    This ensures that, in the limit $\theta \downarrow 0$, half of the discretized points cover the timescale $t \sim \theta$.
\end{enumerate}
An example of a simple function satisfying all the properties above is:
\begin{equation}\label{eq:def_gtheta}
    g_\theta(v) \coloneqq \frac{-1 + 4 v^2 + \sqrt{(1-4v^2)^2 + 4 \theta v^2 (3+\theta)}}{4 (3+\theta)},
\end{equation}
and this is the choice we make in our numerical simulations. We show the behavior of this function in Fig.~\ref{fig:gtheta}.
All in all, the time steps we use for a generic $\theta \geq 0$ are:
\begin{equation}\label{eq:def_tk_sqrt_schedule_theta}
    t_k \coloneqq
    \begin{dcases}
        g_\theta\left(\frac{k}{T/2}\right) &, \hspace{1cm} (k \in \{0, \cdots, T/2\}),\\
        g_\theta\left(\frac{T-k}{T/2}\right) &, \hspace{1cm} (k \in \{T/2+1, \cdots, T\}).
    \end{dcases}
\end{equation}
and we let $\Delta t_k \coloneqq t_k - t_{k-1}$.
\begin{figure}[ht]
    \centering
    \includegraphics[width=0.7\textwidth]{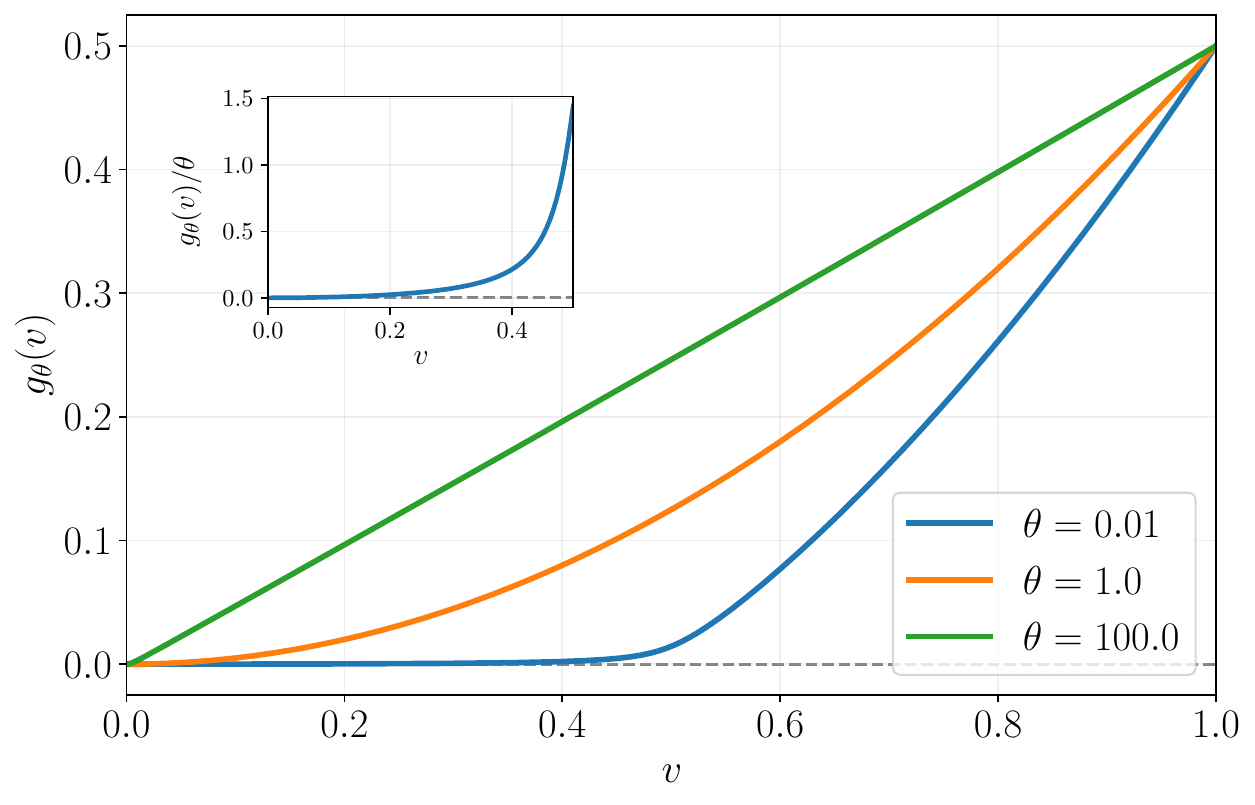}
    \caption{
        The function $g_\theta(v)$ defined in eq.~\eqref{eq:def_gtheta}, for different values of $\theta$.
    }
    \label{fig:gtheta}
\end{figure}

\myskip 
\textbf{Preconditioning --}
The preconditioning procedure of Section~\ref{subsec:preconditioning}, i.e.\ the approximation to the Hessian of $G^\topth_{N, \Dt, \eps}$, generalizes straightforwardly. For completeness, we state that in this case the Hessian reads: 
\begin{align}\label{eq:Hess_G_NDt_theta}
    \Hess \, G^\topth_{N,\Dt, \eps} &= \frac{\pi^2}{6 \theta N} \Diag\left[w_k \sigma_\eps''(\delta_{i,k})\right] + \frac{\theta}{(N+1)^2} L_{N-1}^{-1} \otimes L_{T-1}(\Dt),
\end{align}
while the approximation used in the preconditioner is: 
\begin{align}\label{eq:H_theta}
    &H^\topth(\delta) \coloneqq 
    \\ 
    \nonumber
    &\Id_{N-1} \otimes \Diag\left[\left(\frac{\pi^2 w_k}{6 \theta N(N-1)} \sum_{i=1}^{N-1}
    \sigma_\eps''(\delta_{i,k})\right)_{k=1}^{T-1}\right] \Id_{T-1} + \frac{\theta}{(N+1)^2} L_{N-1}^{-1} \otimes L_{T-1}(\Dt).
\end{align}

\subsection{Preconditioned conjugate gradient}\label{subsec_app:pcg}

We recall here the preconditioned conjugate gradient procedure (see e.g.~\cite{barrett1994templates}) in 
Algorithm~\ref{algo:pcg}.
\begin{algorithm}[t]
\SetAlgoLined
\KwResult{The unique solution $x = \PCG(H ; A, b)$ to the linear system $Ax = b$}
\textbf{Input: } A positive-definite matrix $A$, a vector $b$, a positive-definite matrix $H$ (the \emph{preconditioner}), an initial guess $x_0$, and a precision target $\eps > 0$\;
\emph{Initialize}  $r_0 = b - A x_0$, $z_0 = H^{-1} r_0$, $p_0 = z_0$\;
\While{$\|r_k\|_2 > \eps \|b\|_2$}{
\begin{equation*}
    \begin{dcases}
        \alpha_k & = \frac{r_k^\T z_k}{p_k^\T A p_k}, \\
        x_{k+1} &= x_k + \alpha_k p_k, \\
        r_{k+1} &= b - A x_{k+1} = r_k - \alpha_k A p_k, \\
        z_{k+1} &= H^{-1} r_{k+1}, \\
        p_{k+1} &= z_{k+1} + \frac{r_{k+1}^\T z_{k+1}}{r_k^\T z_k} p_k, \\
    \end{dcases}
\end{equation*}
$k = k + 1$\;
}
\caption{Preconditioned conjugate gradient (PCG).\label{algo:pcg}}
\end{algorithm}
The classical conjugate gradient method corresponds to $H = \Id$. 
However, this method can suffer from slow convergence if the condition number $\kappa(A) \coloneqq \lambda_{\max}(A) / \lambda_{\min}(A)$ is too large.
To remedy this issue,
if $H = EE^T$ is the Cholesky decomposition,
Algorithm~\ref{algo:pcg} effectively applies the classical conjugate gradient method to the linear system $E^{-1} A (E^{-1})^\T y = E^{-1} b$, where $y = E^\T x$. 
One thus aims to choose a preconditioner $H$ such that both the following hold.
\begin{enumerate}[label=$(\roman*)$,leftmargin=*]
    \item $\kappa(E^{-1} A (E^{-1})^\T)$ is much smaller than $\kappa(A)$, i.e.\ $H$ is a ``good approximation'' to $A$.
    \item Solving linear systems of the type $H x = z$ can be done at a low computational cost.
\end{enumerate}

\section{Runtime and convergence analysis experiments}\label{sec_app:additional_numerics}
We report in this section results of experiments on the total runtime of our algorithm (Appendix~\ref{subsec_app:runtime_analysis})
and its convergence properties (Appendix~\ref{subsec_app:cv_analysis}).

\subsection{Runtime analysis}\label{subsec_app:runtime_analysis}

We analyze here the total runtime of Algorithm~\ref{algo:newton_cg}. We take $\mu = \nu = \sigma_{\sci}$ as boundary densities, 
and vary the parameter $\theta \in \{0.01, 1.0\}$.
Runtime includes optimization time but does not include the time required to set up the problem 
or the time required to certify convergence of the last step.

\myskip 
\textbf{Scaling with the time discretization parameter $T$ --}
\begin{figure}[!htbp]
    \centering
    \includegraphics[width=1.0\textwidth]{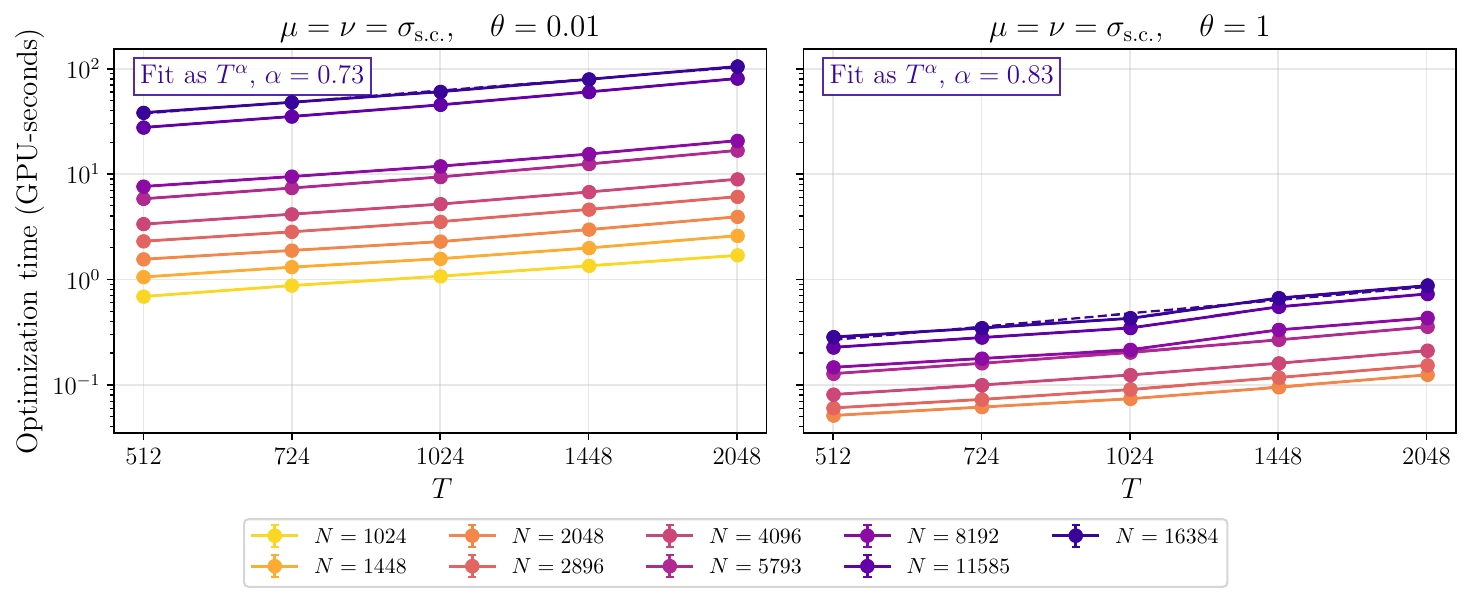}
    \caption{
        Runtime scaling analysis for increasing $T$, on a single RTX8000 GPU.
        Each point is averaged over 5 runs, error bars are very small (almost invisible). 
    }
    \label{fig:runtime_scaling_T}
\end{figure}
In Fig.~\ref{fig:runtime_scaling_T} we show how the total runtime scales for large $T$, at fixed number of particles $N$.
Notice that the number of operations per iteration is $\mcO(T)$ (for a fixed value of $N$ and increasing $T$). The observed scaling is even slower, 
exhibiting a very favorable runtime scaling as $T$ increases.

\myskip 
\textbf{Scaling with the number of particles $N$ --}
\begin{figure}[!htbp]
    \centering
    \includegraphics[width=1.0\textwidth]{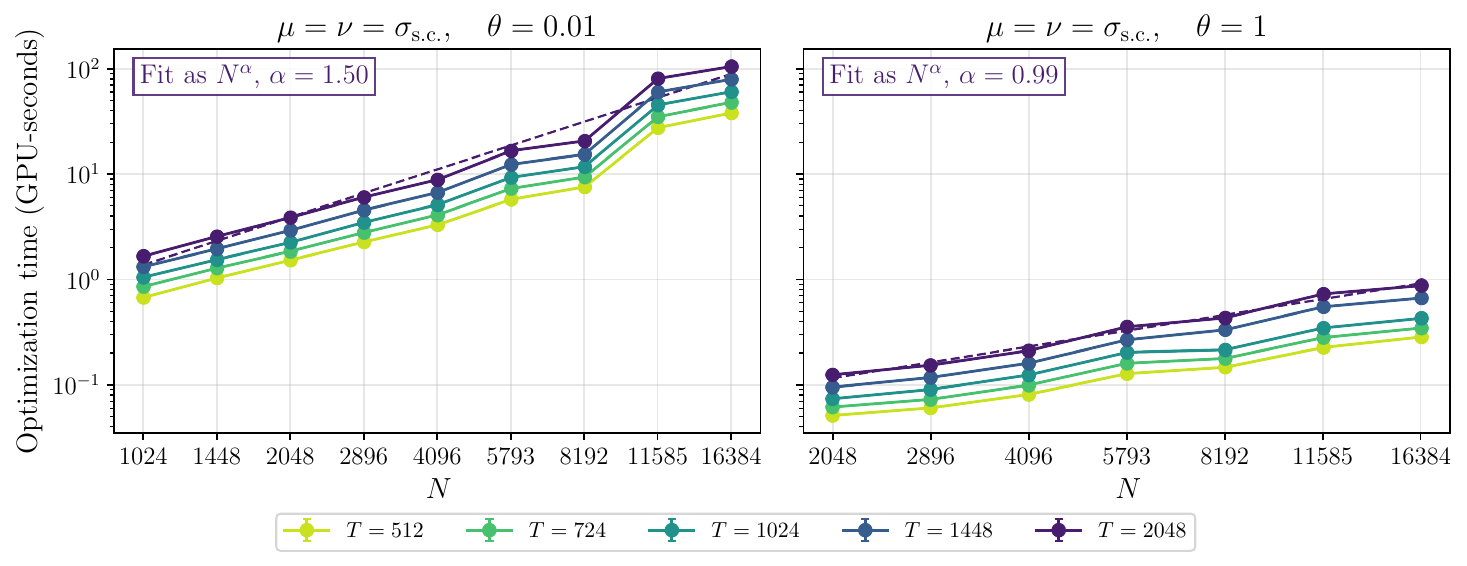}
    \caption{
        Runtime scaling analysis for increasing $N$, on a single RTX8000 GPU.
        Average over 5 runs each time, error bars are very small (almost invisible). 
    }
    \label{fig:runtime_scaling_N}
\end{figure}
In Fig.~\ref{fig:runtime_scaling_N} we show how the total runtime scales for large $N$, at fixed time grid size $T$.
We find two clear behaviors: for $\theta = 1$ the total runtime scales as $\mcO(N)$, while for $\theta = 0.01$ it appears to scale rather as $\mcO(N^{3/2})$.
Notice that the number of operations per PCG iteration is $\mcO(N \log N)$ (for a fixed value of $T$).
The observed scalings therefore suggest that the total number of PCG iterations should increase with $N$ for smaller $\theta$, signaling a more ill-conditioned problem. 
This is compatible with what we observe on the total number of PCG iterations below (see Fig.~\ref{fig:PCG_iterations_scaling}).

\myskip 
\textbf{Joint scaling in $N,T$ --}
\begin{figure}[!htbp]
    \centering
    \includegraphics[width=1.0\textwidth]{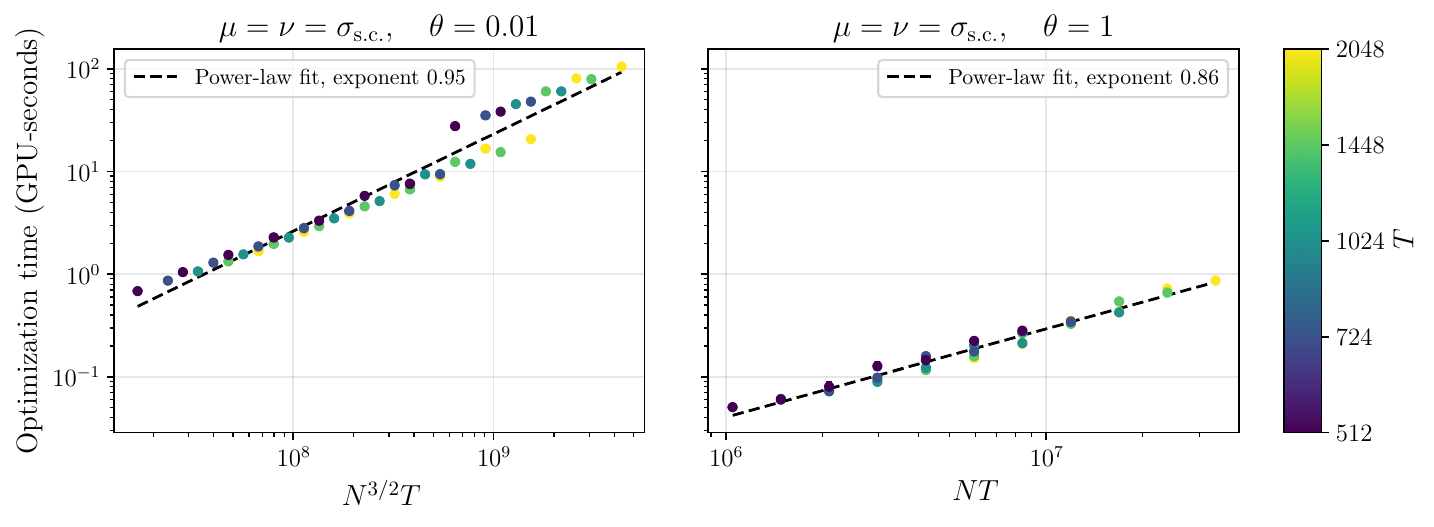}
    \caption{
        Runtime scaling analysis for increasing both $N$ and $T$, on a single RTX8000 GPU.
        Average over 5 runs each time, error bars are very small (almost invisible). 
    }
    \label{fig:runtime_scaling_NT}
\end{figure}
In Fig.~\ref{fig:runtime_scaling_NT} we show the runtime scaling as a function of a single parameter, either $N^{3/2} T$ (for $\theta = 0.01$) or $NT$ (for $\theta = 1.0$). 
In both cases we find that these single parameters represent relatively well the scaling of the total runtime, compatible with the fixed-$N$ and fixed-$T$ experiments shown in Figs~\ref{fig:runtime_scaling_T} and~\ref{fig:runtime_scaling_N}.

\myskip 
\textbf{Total number of PCG iterations --}
\begin{figure}[!htbp]
    \centering
    \includegraphics[width=1.0\textwidth]{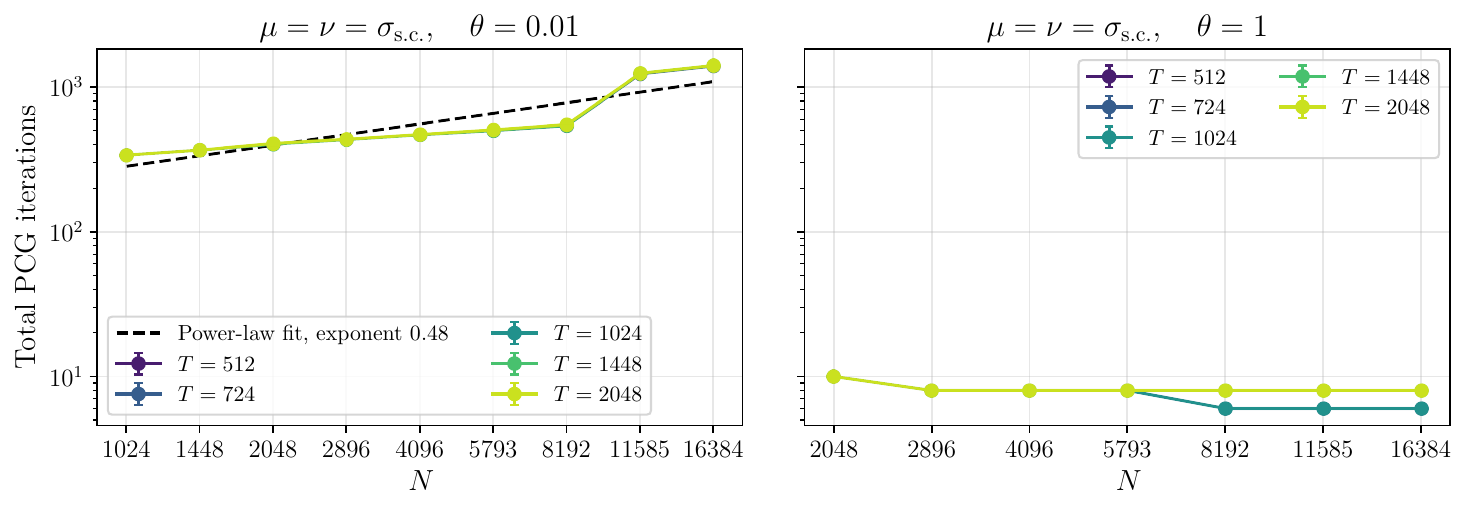}
    \caption{
        Total number of PCG iterations performed (across all Newton updates), as a function of $N$, for a given value of $T$.
    }
    \label{fig:PCG_iterations_scaling}
\end{figure}
Finally, in Fig.~\ref{fig:PCG_iterations_scaling} we show the total number of PCG iterations across all Newton updates, as a function of $N$ and for a range of values of $T$.
One can make two remarks:  first, the number appears independent of $T$ in both cases studied, suggesting that refining the time discretization does not worsen the conditioning of the problem.
Secondly, for $\theta = 1$ the problem is (after pre-conditioning) relatively well-conditioned even as we increase $N$, since the total number of PCG iterations is roughly constant.
However, for $\theta = 0.01$, the number of iterations appears to scale approximately as $N^{1/2}$, 
which is likely due to a Hessian near the global minimum which is more ill-conditioned despite the preconditioning.
Recall that since each PCG update requires $\mcO(N T \log N)$ iterations, multiplying the number of PCG iterations by $N T \log N$ gives a rough estimate of the computation time, compatible with the 
figures shown above.

\subsection{Convergence analysis}\label{subsec_app:cv_analysis}

In this section, we analyze the convergence of the output of Algorithm~\ref{algo:newton_cg} $J_{N, \Dt, \eps}$ (see eq.~\eqref{eq:def_JNDt}) to its asymptotic limit as $N, T \to \infty$, 
i.e.\ $J(\theta, \mu, \nu)$ of eq.~\eqref{eq:def_J}.
Throughout this section, we have $\eps = 10^{-7}$ fixed, we denote $J_{N, \Dt,\eps}(\theta, \mu, \nu)$ as $J_{N, T}$ to highlight its dependency on $N, T$, and denote simply $J(\theta, \mu, \nu)$ as $J$.
For concreteness, we perform the analysis in two cases for the boundary measures:
\begin{enumerate}[label=\textbf{($\bC$\arabic*)},ref=\textbf{($\bC$\arabic*)}]
    \item
    \label{cv_analysis:sc_to_sc}
    We set $\mu = \nu = \sigma_{\sci}$, with the scaling $\theta = 0.01$.
    In this case the value of $J$ is known analytically, so that both the successive
    differences and the signed total error $J_{N,T} - J$ are accessible.
    \item 
    \label{cv_analysis:smp_to_sc}
    We use the same setting as Fig.~\ref{fig:sym_mp_to_sc}:
    $\mu = \mu_{\sMP,\kappa}$ the symmetrized Marchenko-Pastur law
    (see eq.~\eqref{eq:sym_MP}) with $\kappa = 2$, $\nu = \sigma_{\sci, 0.5}$, and
    $\theta = 1$.
    Here $J$ is not known analytically, and we rely on successive differences only to estimate the convergence rates.
\end{enumerate}
Anticipating on the results of this section, the experiments below are all consistent with
the two leading discretization errors being of different orders \emph{and} essentially
decoupled, i.e.\ with an error model
\begin{equation}\label{eq:error_model}
    J_{N, T} = J + \frac{a}{T^2} - \frac{b}{N} + \smallO(T^{-2}) + \smallO(N^{-1}),
\end{equation}
where $a, b > 0$ depend on $(\theta, \mu, \nu)$ but not on $(N, T)$.
Note in particular the opposite signs: the time discretization overestimates $J$, while the
particle discretization underestimates it.

\subsubsection{Convergence as $T \to \infty$ and $\|\Dt\|_\infty \to 0$}

We first study the error introduced by the time discretization, for fixed values of the number of particles $N$.
We analyze~\ref{cv_analysis:sc_to_sc} in Fig.~\ref{fig:convergence_analysis_T_sc_sc}, 
\begin{figure}[!htbp]
    \centering
    \includegraphics[width=1.0\textwidth]{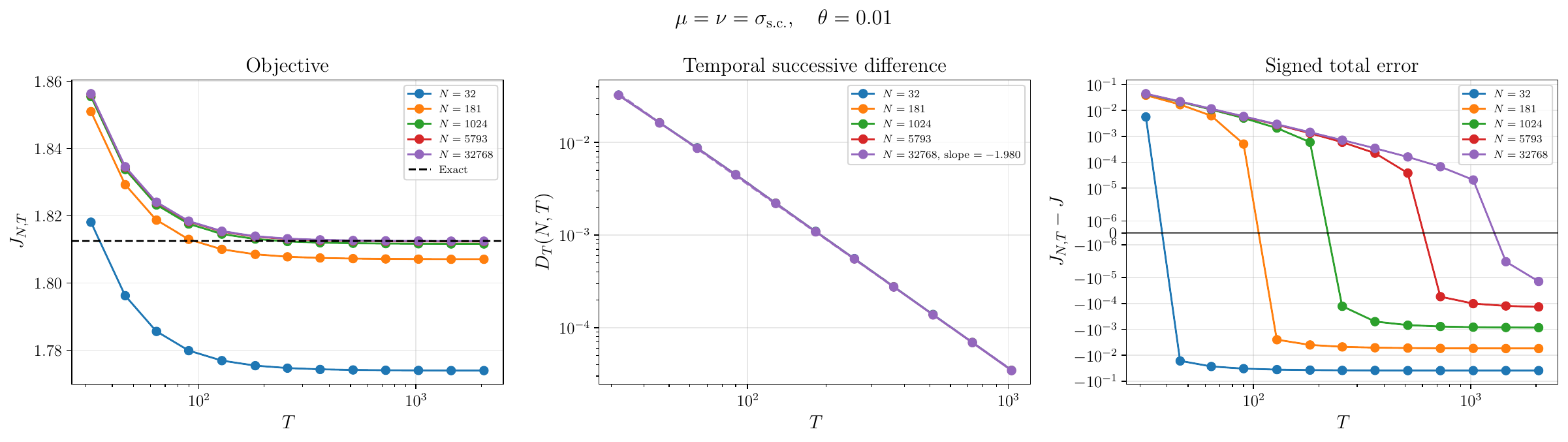}
    \caption{
        Convergence analysis as the number of time discretization steps $T$ grows,
        for~\ref{cv_analysis:sc_to_sc}.
        For different values of $N$ we show the value of the objective alongside its exact
        asymptotic value (left), the successive difference $D_T(N, T) \coloneqq |J_{N, 2T} - J_{N, T}|$ (middle), and the signed total error
        $J_{N,T} - J$ (right).
        The curves in the middle plot are superimposed: the time discretization error is
        essentially $N$-independent, and a least-squares fit is consistent with a $1/T^2$ decay.
        In the right plot, the error changes sign at an $N$-dependent value of $T$: it is
        positive as long as the time discretization dominates, and saturates at the
        (negative) finite-$N$ bias, in agreement with eq.~\eqref{eq:error_model}.
    \label{fig:convergence_analysis_T_sc_sc}
    }
\end{figure}
and~\ref{cv_analysis:smp_to_sc} in Fig.~\ref{fig:convergence_analysis_T_smp_sc}.
\begin{figure}[!htbp]
    \centering
    \includegraphics[width=1.0\textwidth]{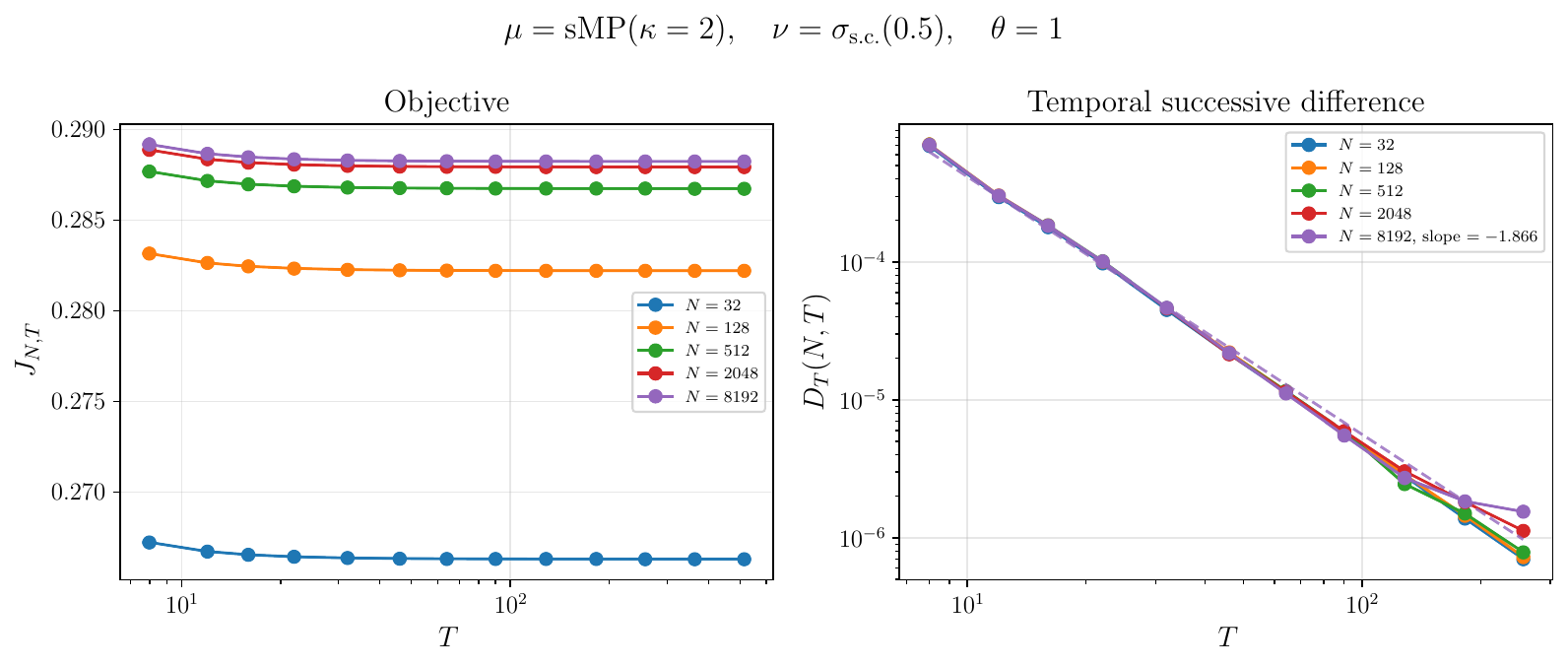}
    \caption{
        Convergence analysis as the number of time discretization steps $T$ grows,
        for~\ref{cv_analysis:smp_to_sc}.
        For different values of $N$ we show the value of the objective (left) and the
        successive difference $D_T(N, T) \coloneqq |J_{N, 2T} - J_{N, T}|$ (right).
        The objective is already within $\sim 10^{-4}$ of its large-$T$ limit for
        $T \simeq 30$, which is why we restrict to $T \leq 512$ here.
        The fitted slope is again compatible with a $1/T^2$ decay.
        }
    \label{fig:convergence_analysis_T_smp_sc}
\end{figure}
In both cases the time-discretization error is compatible with a $1/T^2$ decay, in accordance
with the order of our discretization scheme, and is only weakly dependent on $N$.
We also note that the objective is already very close to its large-$T$ limit for moderate
values of $T \sim 10^2$, so that in practice the accuracy of Algorithm~\ref{algo:newton_cg}
is often limited by $N$ rather than by $T$.

\subsubsection{Convergence as $N \to \infty$}

We then study the convergence rate of the objective as $N \to \infty$, for fixed values of the time discretization grid size $T$.
We analyze~\ref{cv_analysis:sc_to_sc} in Fig.~\ref{fig:convergence_analysis_N_sc_sc}, 
\begin{figure}[!htbp]
    \centering
    \includegraphics[width=1.0\textwidth]{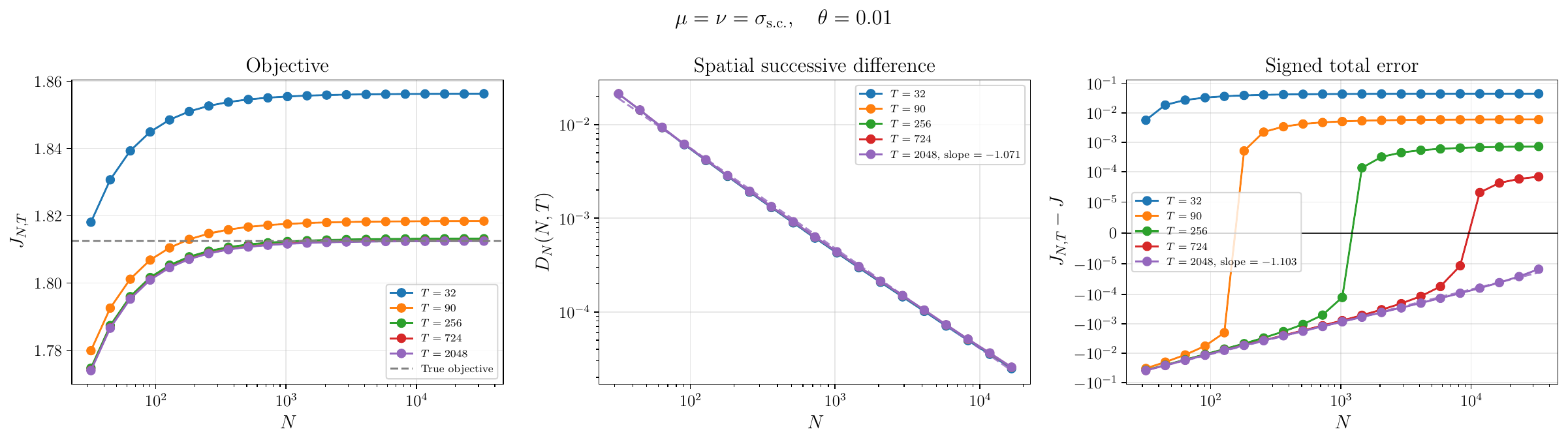}
    \caption{
        Convergence analysis as the number of particles $N$ grows,
        for~\ref{cv_analysis:sc_to_sc}.
        For different values of $T$ we show the value of the objective alongside its exact
        asymptotic value (left), the successive difference $D_N(N, T) \coloneqq |J_{2N, T} - J_{N, T}|$ (middle), and the
        signed total error $J_{N,T} - J$ (right).
        The curves in the middle plot are superimposed, so that the finite-$N$ error is
        essentially $T$-independent, and is compatible with eq.~\eqref{eq:error_model}. The fitted slope is compatible
        with a $1/N$ leading-order decay.
        }
    \label{fig:convergence_analysis_N_sc_sc}
\end{figure}
and~\ref{cv_analysis:smp_to_sc} in Fig.~\ref{fig:convergence_analysis_N_smp_sc}.
\begin{figure}[!htbp]
    \centering
    \includegraphics[width=1.0\textwidth]{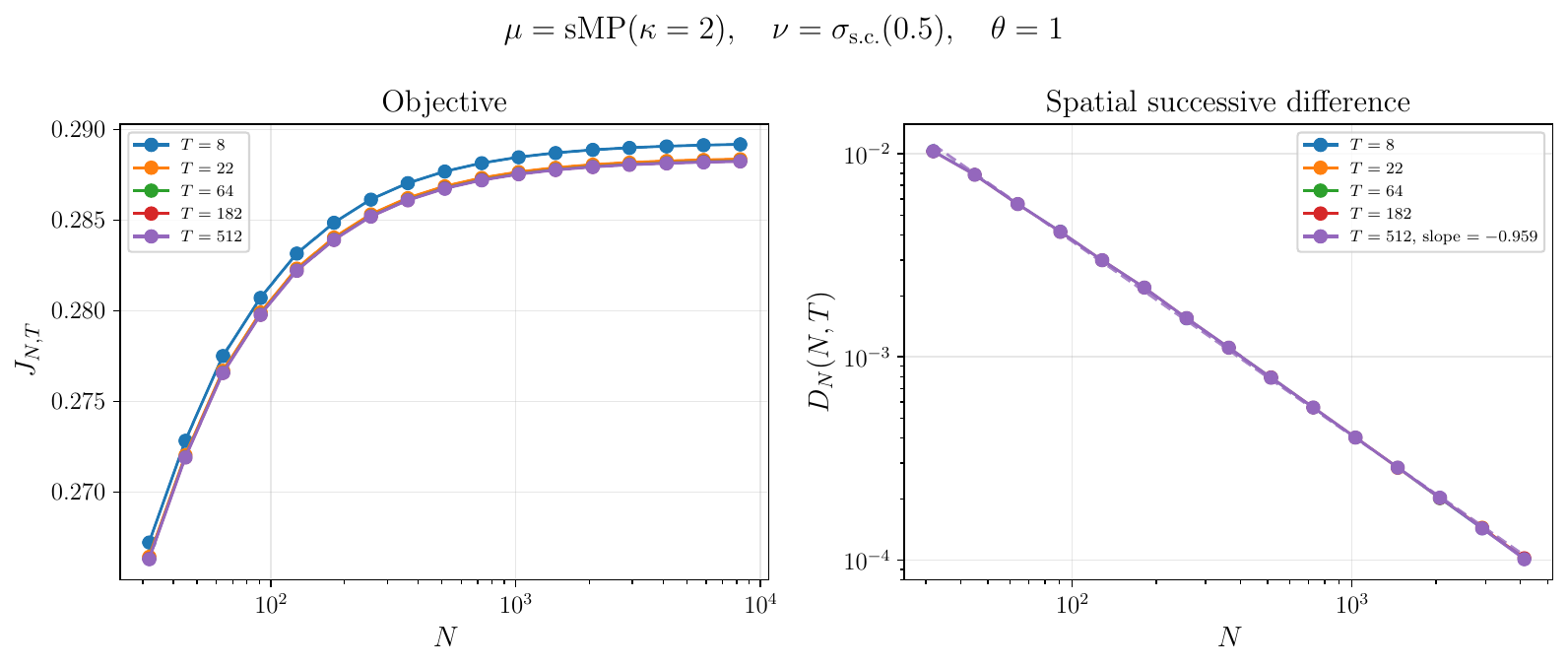}
    \caption{
        Convergence analysis as the number of particles $N$ grows,
        for~\ref{cv_analysis:smp_to_sc}.
        For different values of $T$ we show the value of the objective (left) and the
        successive difference $D_N(N, T) \coloneqq |J_{2N, T} - J_{N, T}|$ (right).
        As in Fig.~\ref{fig:convergence_analysis_N_sc_sc} the successive differences for
        different $T$ are superimposed, and the fitted slope is
        compatible with a $1/N$ decay.
        }
    \label{fig:convergence_analysis_N_smp_sc}
\end{figure}
The finite-$N$ error is thus compatible with a $1/N$ decay in both cases, and nearly
independent of $T$, consistently with the prediction of eq.~\eqref{eq:error_model}.

\subsubsection{Richardson extrapolation in \texorpdfstring{$N$}{N}}
\label{subsubsec_app:richardson}

The $1/N$ decay observed above motivates the use of Richardson
extrapolation~\cite{richardson1911approximate}. 
The idea can be summarized as follows. If, at fixed $T$:
\begin{equation*}
    J_{N, T} = J_{N = \infty, T} + \frac{a_1}{N} + \frac{a_2}{N^\alpha} + \smallO(N^{-\alpha}),
\end{equation*}
for some $\alpha > 1$ and some coefficients $a_1, a_2 \in \bbR$, then letting
\begin{equation*}
    J_{N, T}^{(R)} \coloneqq 2 J_{N, T} - J_{N/2, T},
\end{equation*}
one has
\begin{equation}\label{eq:def_richardson}
    J_{N, T}^{(R)} = J_{N = \infty, T} - \frac{2 (2^{\alpha - 1} - 1) a_2}{N^\alpha}
    + \smallO(N^{-\alpha}),
\end{equation}
i.e.\ the leading $1/N$ term is eliminated.
We apply this procedure for~\ref{cv_analysis:sc_to_sc} in Fig.~\ref{fig:richardson_extrapolation_sc_sc},
\begin{figure}[!htbp]
    \centering
    \includegraphics[width=1.0\textwidth]{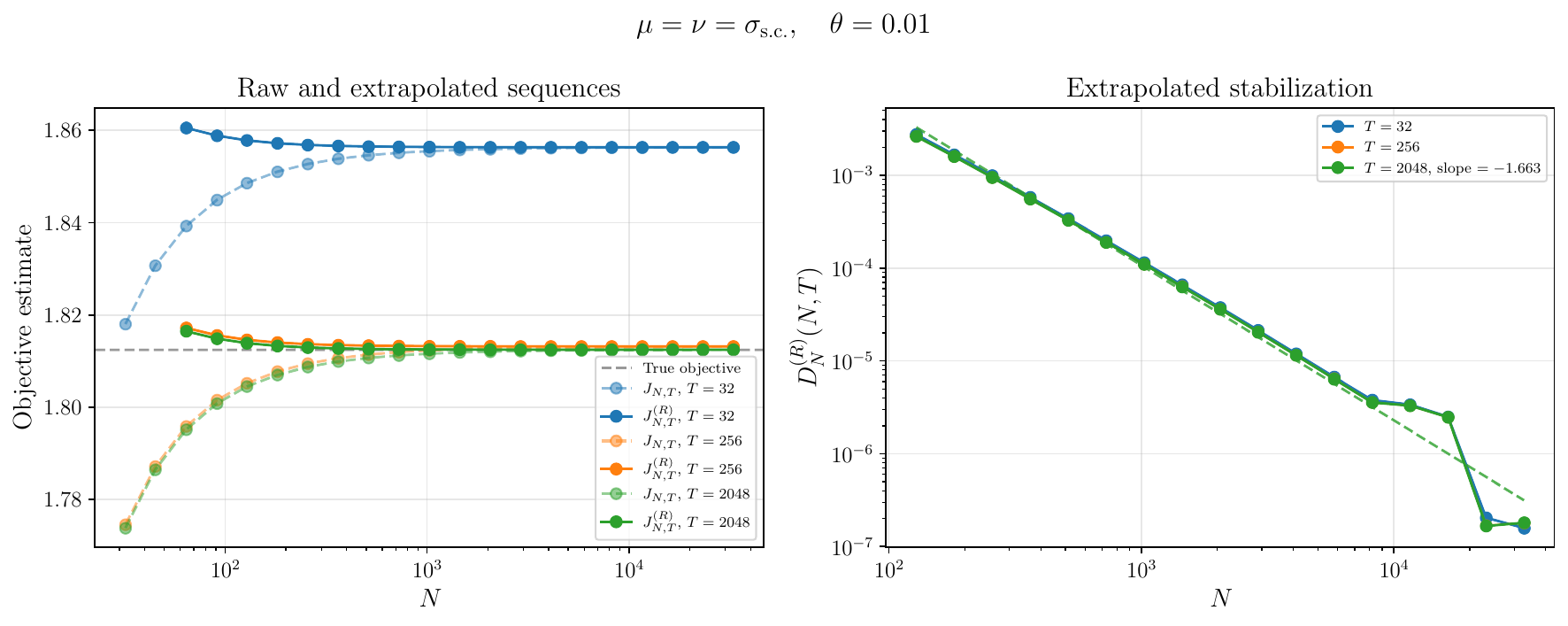}
    \caption{
        Richardson extrapolation (see eq.~\eqref{eq:def_richardson})
        for~\ref{cv_analysis:sc_to_sc}.
        We plot the raw and extrapolated objective estimates (left), as well as the
        successive difference
        $D_N^{(R)}(N, T) \coloneqq |J_{2N, T}^{(R)} - J_{N, T}^{(R)}|$ (right).
        }
    \label{fig:richardson_extrapolation_sc_sc}
\end{figure}
and to~\ref{cv_analysis:smp_to_sc} in Fig.~\ref{fig:richardson_extrapolation_smp_sc}.
\begin{figure}[!htbp]
    \centering
    \includegraphics[width=1.0\textwidth]{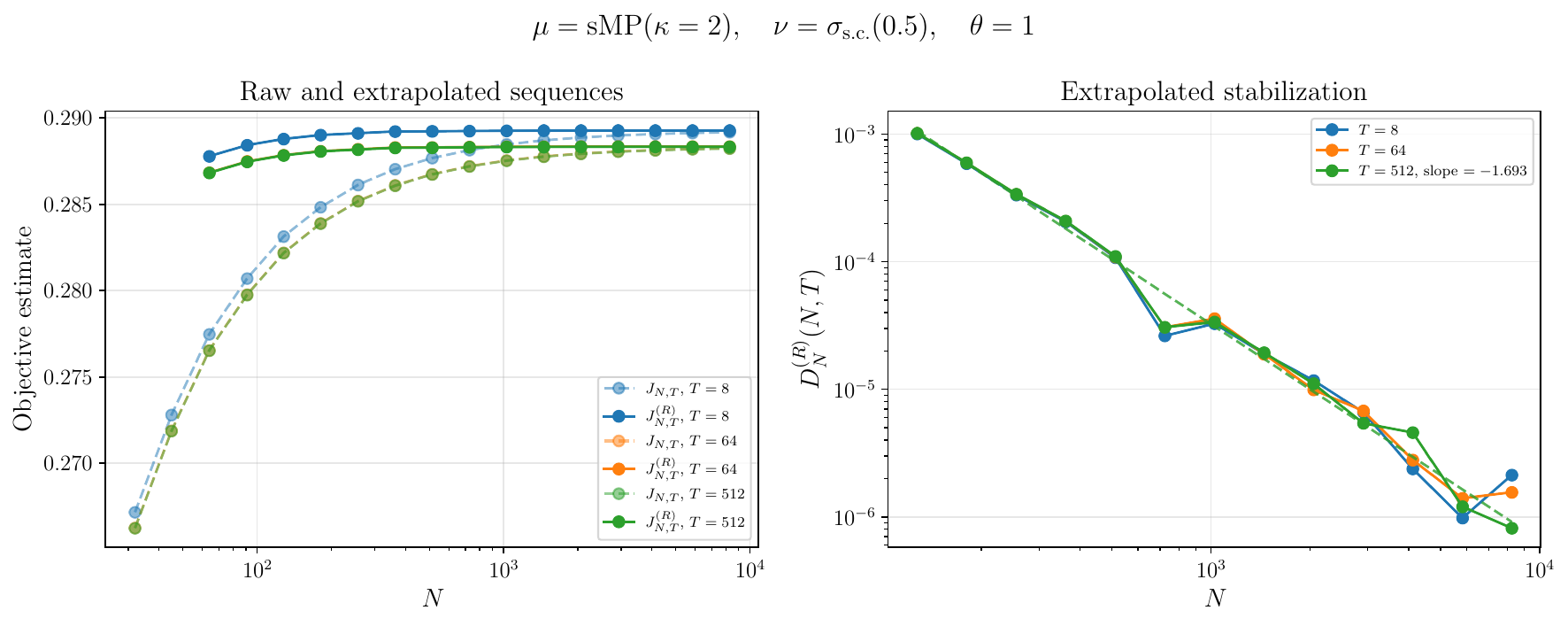}
    \caption{
        Richardson extrapolation (see eq.~\eqref{eq:def_richardson})
        for~\ref{cv_analysis:smp_to_sc}.
        We plot the raw and extrapolated objective estimates (left), as well as the
        successive difference $D_N^{(R)}(N, T) \coloneqq |J_{2N, T}^{(R)} - J_{N, T}^{(R)}|$ (right).
        }
    \label{fig:richardson_extrapolation_smp_sc}
\end{figure}
We observe a clear acceleration of the convergence, compatible with eq.~\eqref{eq:def_richardson} 
with an exponent $\alpha \in [1.6, 1.7]$.
In practice the gain is substantial: reaching an error of $10^{-4}$
in~\ref{cv_analysis:sc_to_sc} requires $N \simeq 10^4$ for the raw sequence against
$N \simeq 10^3$ for the extrapolated one.

\end{document}